\newtheorem{thm}{Theorem}[section]
\newtheorem{cor}[thm]{Corollary}
\newtheorem{lem}[thm]{Lemma}
\newtheorem{prop}[thm]{Proposition}
\theoremstyle{definition}
\theoremstyle{remark}
\numberwithin{equation}{section}
\begin{document}

\title[Optimal control of a large dam]{Optimal control of a large dam,
taking into account the water costs}%
\author{Vyacheslav M. Abramov}%
\address{School of Mathematical Sciences, Monash University, Building 28M,
Wellington road, Clayton, VIC 3800, Australia}%
\email{vyacheslav.abramov@sci.monash.edu.au}%

\subjclass{60K30, 40E05, 90B05, 60K25}%
\keywords{Dam, State-dependent queue, Asymptotic analysis, Control problem}%

\begin{abstract}
This paper studies large dam models where the difference between
lower and upper levels, $L$, is assumed to be large. Passage across
the levels leads to damage, and the damage costs of crossing the
lower or upper level are proportional to the large parameter $L$.
Input stream of water is described by compound Poisson process, and
the water cost depends upon current level of water in the dam. The
aim of the paper is to choose the parameters of output stream
(specifically defined in the paper) minimizing the long-run
expenses. The particular problem, where input stream is ordinary
Poisson and water costs are not taken into account, has been studied
in [Abramov, \emph{J. Appl. Prob.}, 44 (2007), 249-258]. The present
paper addresses the question \textit{How does the structure of water
costs affect the optimal solution?} Under natural assumptions we
prove an existence and uniqueness of a solution and study the case
of linear structure of the costs.
\end{abstract}
\maketitle

\section{Introduction}\label{Introduction}
A large dam is defined by the parameters $L^{\mathrm{lower}}$ and
$L^{\mathrm{upper}}$, which are, respectively, the lower and upper
levels of the dam. If the current level is between these bounds, the
dam is assumed to be in a normal state. The difference $L =
L^{\mathrm{upper}}-L^{\mathrm{lower}}$ is large, and this is the
reason for calling the dam \textit{large}. This property enables us
to use asymptotic analysis as $L\to\infty$ and solve different
problems of optimal control, which by a direct way, that is without
using an asymptotic analysis, become very hard.

Let $L_t$ denote the water level at time $t$. If
$L^{\mathrm{lower}}<L_t\leq L^{\mathrm{upper}}$, then the state of
the dam is called \textit{normal}. Passage across lower or upper
level leads to damage. The costs per time unit of this damage are
$J_1=j_1L$ for the lower level and, respectively, $J_2=j_2L$ for the
upper level, where $j_1$ and $j_2$ are given real constants. The
water inflow is described by a compound Poisson process. Namely, the
probability generating function of input amount of water (which is
assumed to be an integer-valued random variable) in an interval $t$
is given by
 \begin{equation}\label{pgfArrival}
 f_t(z)=\exp\left\{-\lambda t\left(1-\sum_{i=1}^{\infty}r_iz^i\right)\right\},
 \end{equation}
where $r_i$ is the probability that at a specified moment of Poisson
arrival the amount of water will increase by $i$ units. In practice
this means that the arrival of water is registered at random
instants $t_1$, $t_2$, \ldots; the times between consecutive
instants are mutually independent and exponentially distributed with
parameter $\lambda$, and quantities of water (number of water units)
of input flow are specified as a quantity $i$ with probability $r_i$
($r_1+r_2+\ldots=1$). Clearly that this assumption is more
applicable to real world problems than the assumption of
\cite{Abramov 2007} where the inter-arrival times of water units are
exponentially distributed with parameter $\lambda$. For example, the
assumption made in the present paper enables us to approach a
continuous dam model, assuming that the water levels $L_t$ take the
discrete values $\{j\Delta\}$, where $j$ is a positive integer and
step $\Delta$ is a positive small real constant. In the paper,
however, the water levels $L_t$ are assumed to be integer-valued.

The outflow of water is state-dependent as follows. If the level of
water is between $L^{\mathrm{lower}}$ and $L^{\mathrm{upper}}$, then
an interval between departures of water units (inverse output flow)
has the probability distribution function $B_1(x)$. If the level of
water exceeds $L^{\mathrm{upper}}$, then an inverse output flow has
the probability distribution function $B_2(x)$. The probability
distribution function $B_2(x)$ is assumed to obey the condition
$\int_0^\infty x\mbox{d}B_2(x)<\frac{1}{\lambda}$. If the level of
water is $L^{\mathrm{lower}}$ exactly, then output of water is
frozen, and it resumes again as soon as the level of water exceeds
the level $L^{\mathrm{lower}}$. (The exact mathematical formulation
of the problem taking into account some specific details is given
below.)

Let $c_{L_t}$ denote the cost of water at level $L_t$. The
sequence $c_i$ is assumed to be positive and non-increasing. The
problem of the present paper is to choose the parameter
$\int_0^\infty x\mbox{d}B_1(x)$ of the dam in the normal state
minimizing the objective function
\begin{equation}\label{I1}
J = p_1J_1 + p_2J_2 + \sum_{i=L^{\mathrm{lower}}+1}^{L^{\mathrm{upper}}}c_iq_i,
\end{equation}
where
\begin{eqnarray}
p_1&=&\lim_{t\to\infty}\mathsf{Pr}\{L_t=L^{\mathrm{lower}}\},\label{I2}\\
p_2&=&\lim_{t\to\infty}\mathsf{Pr}\{L_t>L^{\mathrm{upper}}\},\label{I3}\\
q_i&=&\lim_{t\to\infty}\mathsf{Pr}\{L_t=L^{\mathrm{lower}}+i\}, \
i=1,2,\ldots,L\label{I4}.
\end{eqnarray}
Usually the level $L^{\mathrm{lower}}$ is identified with an empty
queue (i.e. $L^{\mathrm{lower}}:=0$ and $L^{\mathrm{upper}}:=L$),
and the dam model is the following queueing system with service
depending on queue-length. If immediately before a service beginning
the queue-length exceeds the level $L$, then the customer is served
by the probability distribution function $B_2(x)$. Otherwise, the
service time distribution is $B_1(x)$. The value $p_1$ is the
stationary probability of an empty system, the value $p_2$ is the
stationary probability that a customer is served by probability
distribution $B_2(x)$, and $q_i$, $i=1,2,\ldots,L$, are the
stationary probabilities of the queue-length process, so
$p_1+p_2+\sum\limits_{i=1}^L q_i=1$. (For the described queueing
system, the right-hand side limits in relations
\eqref{I2}-\eqref{I4} do exist.)

In our study, the parameter $L$ increases unboundedly, and we deal
with the series of queueing systems. The above parameters, such as
$p_1$, $p_2$, $J_1$, $J_1$ as well as other parameters are functions
of $L$. The argument $L$ will be often omitted in these functions.

Similarly to \cite{Abramov 2007}, it is assumed that the input
parameter $\lambda$, the probabilities $r_1$, $r_2$,\ldots and
probability distribution function $B_2(x)$ are given, while the
appropriate probability function $B_1(x)$ should be chosen from the
specified parametric family of functions $B_1(x,C)$. (Actually, we
deal with the family of probability distribution functions $B_1(x)$
depending on two parameters $\delta$ and $L$ in series, i.e.
$B_1(x,\delta,L)$. Then the parametric family of distributions
$B_1(x,C)$ is described in the limiting scheme as $\delta L\to C$,
so the parameter $C$ belongs to the family of possible limits of
$\delta L$ as $\delta\to0$ and $L\to\infty$.)

 The outflow rate,
should be chosen such that to minimize the objective function of
\eqref{I1} with respect to the parameter $C$, which results in
choice of the corresponding probability distribution function
$B_1(x,C)$ of that family.

A particular problem have been studied in \cite{Abramov 2007}. A
circle of problems associated with the results of \cite{Abramov
2007} are discussed in a review paper \cite{Abramov 2009}.

The simplest model with Poisson input stream and the objective
function having the form $J= p_1J_1 + p_2J_2$ (i.e. the water costs
are not taken into account), has been studied in \cite{Abramov
2007}. Denote $\rho_2=\lambda\int_0^\infty x\mbox{d}B_2(x)$ and
$\rho_1=\rho_1(C)=\lambda\int_0^\infty x\mbox{d}B_1(x,C)$. (The
parameter $C$ is a unique solution of a specific minimization
problem precisely formulated in \cite{Abramov 2007}.) In was shown
in \cite{Abramov 2007} that the solution to the control problem is
unique and has one of the following three forms:

(i) in the case $j_1=j_2\frac{\rho_2}{1-\rho_2}$ the optimal
solution is $\rho_1=1$;

(ii) in the case $j_1>j_2\frac{\rho_2}{1-\rho_2}$ the optimal
solution has the form $\rho_1=1+\delta$, where $\delta(L)$ is a
small positive parameter, and $\delta(L)L\to C$ as $L\to\infty$;

(iii) in the case $j_1<j_2\frac{\rho_2}{1-\rho_2}$, the optimal
strategy has the form $\rho_1=1-\delta$, and $\delta(L)L\to C$ as
$L\to\infty$.

It has been also shown in \cite{Abramov 2007} that the solution to
the control problem is insensitive to the type of probability
distributions $B_1(x)$ and $B_2(x)$. Specifically, it is expressed
via the first moment of $B_2(x)$ and the first two moments of
$B_1(x)$.

The aforementioned cases (i), (ii) and (iii) fall into the category
of heavy traffic analysis in queueing theory. There are many papers
related to this subject. We mention the books of Chen and Yao
\cite{Chen and Yao 2001} and Whitt \cite{Whitt 2001}, where a reader
can find many other references. The aforementioned paper
\cite{Abramov 2007} as well as the present paper, however, are
conceptually close to the well-known paper of Halfin and Whitt
\cite{Halfin and Whitt 1981}. The heavy-traffic conditions in
queueing systems with large number of identical servers arise
naturally if we assume that a high-level, associated with the loss
in that system, is reached with a given positive probability, while
the traffic intensities converge to 1 from the below and the arrival
rates and number of servers increase to infinity. In the case of the
single-server state-dependent queueing systems of the present paper
that model a large dam, we assume that the specified costs for
reaching the lower and upper levels multiplied by the corresponding
probabilities must converge to the given fixed values in limit, and
the resulting functional containing these quantities must be
minimized. This leads to the study of the family of systems under
the heavy-traffic behaviour, in which the sequence of products
$\delta L$ must converge to the optimal value $C$.

Compared to the earlier studies in \cite{Abramov 2007}, the solution
of the problems in the present paper requires a much deepen and
delicate analysis. The results of \cite{Abramov 2007} are extended
in two directions: (1) the arrival process is compound Poisson
rather than Poisson, and (2) structure of water costs in dependence
of the level of water in the dam is included.

The first extension leads to new techniques of stochastic analysis.
The main challenge in \cite{Abramov 2007} was to reduce the certain
characteristics of the system during a busy period to the
convolution type recurrence relation such as
$Q_n=\sum\limits_{i=0}^n Q_{n-i+1}f_i$ ($Q_0\neq0$), where $f_0>0$,
$f_i\geq0$ for all $i\geq1$, $\sum\limits_{i=0}^\infty f_i=1$ and
then to use the known results on the asymptotic behaviour of $Q_n$
as $n\to\infty$. In the case when arrivals are compound Poisson, the
same characteristics of the system cannot be reduced to the
aforementioned convolution type of recurrence relation. Instead, we
obtain a more general scheme including as a part the aforementioned
recurrence relation. In this case, asymptotic analysis of the
required characteristics becomes very challenging. It is based on
special stochastic domination methods, which will be explained in
details later.

The second extension leads to new analytic techniques of asymptotic
analysis. Asymptotic methods of \cite{Abramov 2007} do not longer
work, and one should use more delicate techniques instead. That is,
instead of Tak\'acs' asymptotic theorems \cite{Takacs 1967}, p.
22-23, one should use special Tauberian theorems with remainder by
Postnikov \cite{Postnikov 1980}, Sect. 25. For different
applications of the aforementioned Tak\'acs' asymptotic theorems and
Tauberian theorems of Postnikov see \cite{Abramov 2009}.

Another challenging problem for the dam model in the present paper
is the solution to the control problem, that is, the proof of a
uniqueness of the optimal solution. In the case of the model in
\cite{Abramov 2007} the existence and uniqueness of a solution
follows automatically from the explicit representations of the
functionals obtained there. (The existence of a solution follows
from the fact that in the case $\rho_1=1$ we get a bounded value of
the functional, while in the cases $\rho_1<1$ and $\rho_1>1$ the
functional is unbounded. Then the uniqueness of a solution reduces
to elementary minimization problem for smooth convex functions.) In
the case of the model in the present paper, the solution of the
present problem with extended criteria \eqref{I1} is related to the
same class of solutions as in \cite{Abramov 2007}. That is, it must
be either $\rho_1=1$ or one of two limits of $\rho_1=1+\delta$,
$\rho_1=1-\delta$ for positive small vanishing $\delta$ as $L$
increases unboundedly, and $L\delta\to C$. While the existence of a
solution follows trivially as in \cite{Abramov 2007}, the proof of a
uniqueness of the solution requires elegant techniques of the theory
of analytic functions and majorization inequalities (see \cite{Hardy
Littlewood Polya 1952} and \cite{Marschall Olkin 1979}).


Similarly to \cite{Abramov 2007}, we use the notation
$\rho_{1,l}=\lambda^l\int_0^\infty x^l\mbox{d}B_1(x)$, $l=2,3$. The
existence of $\rho_{1,l}$ (i.e. the moments of the third order of
$B_1(x)$) will be specially assumed in the formulations of the
statements corresponding to case studies.

It is assumed in the present paper that $c_i$ is a non-increasing
sequence. If the cost sequence $c_i$ were an arbitrary bounded
sequence, then a richer class of possible cases could be studied.
However, in the case of arbitrary cost sequence, the solution need
not be unique, and arbitrary costs $c_i$, say increasing in $i$,
seem not to be useful and, therefore, are not considered here. A
non-increasing sequence $c_i$ depends on $L$ in series. This means
that as $L$ changes (increasing to infinity) we have different
non-increasing sequences (see example in Section \ref{Examples}).
The initial value $c_1$ and final value $c_L$ are taken fixed and
strictly positive, and the limit of $c_L$ as $L\to\infty$ is assumed
to be positive as well.

Realistic models arising in practice assume that the probability
distribution function $B_1(x)$ should also depend on $i$, i.e have
the representation $B_{1,i}(x)$. The model of the present paper,
where $B_1(x)$ is the same for all $i$, under appropriate additional
information can approximate those more general models. Namely, one
can suppose that the stationary service time distribution $B_1(x)$
has the representation $B_1(x)=\sum\limits_{i=1}^Lq_iB_{1,i}(x)$
($q_i$, $i=1,2,\ldots,L$ are the state probabilities), and the
solution to the control problem for $B_1(x)$ enables us to find then
the approximate solutions to the control problem for $B_{1,i}(x)$,
$i=1,2,\ldots,L$ by using the Bayes rule. For example, the simplest
model can be of the form $B_1(x)=aB_{1}^*(x)+bB_1^{**}(x)$, where
$a:=\sum\limits_{i=1}^{L^0}q_i$ ($L^0<L$), and, respectively,
$b:=\sum\limits_{i=L^0+1}^{L}q_i$.

\smallskip
In the present paper we address the following questions.
\smallskip

$\bullet$ Uniqueness of an optimal solution and its structure.

$\bullet$ Interrelation between the parameters $j_1$, $j_2$,
$\rho_2$, $c_i$ ($i=1,2,\ldots,L$) when the optimal solution is
$\rho_1=1$.

\smallskip
The uniqueness of an optimal solution is given by Theorem
\ref{thm3}. In the case of the model considered in \cite{Abramov
2007} the condition for $\rho_1=1$ is
$j_1=j_2\frac{\rho_2}{1-\rho_2}$. Intuitive explanation of this
result is based on the well-known property of the stream of losses
during a busy period of $M/GI/1/n$ queues, under the assumption that
the expected interarrival and service times are equal (see Abramov
\cite{Abramov 1997}, Righter \cite{Righter 1999} and Wolff
\cite{Wolff 2002}). In the case of the model in this paper the
interrelation between the aforementioned and some additional
parameters involves the inequality (see Section \ref{Solution},
Corollary \ref{cor2}). Exact results are obtained in the particular
case of linearly decreasing costs as the level of water increases
(for brevity, this case is called \textit{linear costs}). In this
case, a numerical solution of the problem is given.
\smallskip

The rest of the paper is organized as follows. In Section
\ref{Methodology} the main ideas and methods of asymptotic analysis
are given. In Section \ref{MG1}, we recall the basic methods related
to state dependent queueing system with ordinary Poisson input that
have been used in \cite{Abramov 2007}. Then in Section \ref{MXG1},
extensions of these methods for the model considered in this paper
are given. Specifically, the methodology of constructing linear
representations between mean characteristics given during a busy
period is explained. In Section \ref{Stationary probabilities}, the
asymptotic behavior of the stationary probabilities is studied. In
Section \ref{Preliminaries}, known Tauberian theorems that are used
in the asymptotic analysis in the paper are recalled. In Section
\ref{Preliminary asymp}, exact formulae for the stationary
probabilities $p_1$ and $p_2$ are derived. On the basis of these
formulae, in Sections \ref{Final asymp} and \ref{Final asymp 2} the
asymptotic theorems for the stationary probabilities $p_1$ and $p_2$
have been established. Section \ref{Q stationary probabilities} is
devoted to asymptotic analysis of the stationary probabilities
$q_{L-i}$, $i=1,2,\ldots$. In Section \ref{Explicit q}, the explicit
representation for the stationary probabilities $q_i$ is derived. On
the basis of this explicit representation and Tauberian theorems, in
following Sections \ref{Case 1}, \ref{Case 2} and \ref{Case 3}
asymptotic theorems for these stationary probabilities are
established in the cases $\rho_1=1$, $\rho_1=1+\delta$ and
$\rho_1=1-\delta$ correspondingly, where positive $\delta$ is
assumed to vanish such that $\delta L\to C$ as $L\to\infty$. In
Section \ref{ObFunction} the objective function given in \eqref{I1}
is studied. In following Sections \ref{Case 1O}, \ref{Case 2O} and
\ref{Case 3O}, the asymptotic theorems for this objective function
are established for the cases $\rho_1=1$, $\rho_1=1+\delta$ and
$\rho_1=1-\delta$ correspondingly. In Section \ref{Solution}, the
theorem on existence and uniqueness of a solution is proved. In
Section \ref{Examples}, the case of linear costs is studied and
relevant numerical results are provided.

\section{Methodology of analysis}\label{Methodology}

In this section we describe the methodology used in the present
paper. This is a very important step because the earlier methods of
\cite{Abramov 2007} do not work for this extended model and hence
need in substantial revision.

We start from the model where arrivals are Poisson, and then we
explain how the methods should be developed for the model where an
arrival process is compound Poisson.

\subsection{State dependent queueing system with Poisson input and its characteristics}\label{MG1}

In this section we consider the simplest model in which arrival flow
is Poisson with parameter $\lambda$. Let $T_L$ denote the length of
a busy period of this system, and let $T_L^{(1)}$, $T_L^{(2)}$
denote the cumulative times spent for service of customers arrived
during that busy period with probability distribution functions
$B_1(x)$ and $B_2(x)$ correspondingly. For $k=1,2$, the expectations
of service times will be denoted by $\frac{1}{\mu_k}=\int_0^\infty
x\mathrm{d}B_k(x)$, and the loads by $\rho_k=\frac{\lambda}{\mu_k}$.
Let $\nu_L$, $\nu_L^{(1)}$ and $\nu_L^{(2)}$ denote correspondingly
the number of served customers during a busy period, and the numbers
of those customers served by the probability distribution functions
$B_1(x)$ and $B_2(x)$. The random variable $T_L^{(1)}$ coincides in
distribution with a busy period of the $M/GI/1/L$ queueing system
($L$ is the number of waiting places excluding the place for
server). The elementary explanation of this fact is based on a
property of level crossings and the property of the lack of memory
of exponential distribution (e.g. \cite{Abramov 2007}), so the
analytic representation for $\mathsf{E}T_L^{(1)}$ is the same as
this for the expected busy period of the $M/GI/1/L$ queueing system.
The recurrence relation for the Laplace-Stieltjes transform and
consequently that for the expected busy period of the $M/GI/1/L$
queueing system has been derived by Tomko \cite{Tomko}. So, for
$\mathsf{E}T_L^{(1)}$ the following recurrence relation is
satisfied:
\begin{equation}\label{MG1.1}
\mathsf{E}T_L^{(1)}=\sum_{i=0}^L\mathsf{E}T_{L-i+1}^{(1)}\int_0^\infty\mathrm{e}^{-\lambda
x}\frac{(\lambda x)^i}{i!} \mathrm{d}B_1(x),
\end{equation}
where $\mathsf{E}T_0^{(1)}=\frac{1}{\mu_1}$. (The random variable
$T_i^{(1)}$ is defined similarly to that of $T_L^{(1)}$. The only
difference is in the state parameter $i$ that is given instead of
$L$.) Recurrence relation \eqref{MG1.1} is a particular form of the
recurrence relation
\begin{equation}\label{MG1.1.1}
Q_n=\sum_{i=0}^n Q_{n-i+1}f_i,
\end{equation}
where $Q_0\neq0$, $f_0>0$, $f_i\geq0$, $i=1,2,\ldots$ and
$\sum\limits_{i=0}^\infty f_i=1$ (see Tak\'acs \cite{Takacs 1967}).

Using the obvious system of equations:
\begin{eqnarray}
\mathsf{E}T_L&=&\mathsf{E}T_{L}^{(1)}+\mathsf{E}T_{L}^{(2)},\label{MG1.2}\\
\mathsf{E}\nu_L&=&\mathsf{E}\nu_{L}^{(1)}+\mathsf{E}\nu_{L}^{(2)},\label{MG1.3}
\end{eqnarray}
and Wald's equations (see \cite{Feller 1966}, p.384)
\begin{eqnarray}
\mathsf{E}T_L^{(1)}&=&\frac{1}{\mu_1}\mathsf{E}\nu_L^{(1)},\label{MG1.4}\\
\mathsf{E}T_L^{(2)}&=&\frac{1}{\mu_2}\mathsf{E}\nu_L^{(2)},\label{MG1.5}
\end{eqnarray}
one can express the quantities $\mathsf{E}T_L$, $\mathsf{E}\nu_L$,
$\mathsf{E}T_L^{(2)}$, $\mathsf{E}\nu_L^{(1)}$ and
$\mathsf{E}\nu_L^{(2)}$ all via $\mathsf{E}T_{L}^{(1)}$ as the
linear functions. Indeed, taking into account that the number of
arrivals during a busy cycle coincides with the total number of
customers served during a busy period we have
\begin{equation}\label{MG1.5.1}
\lambda\mathsf{E}T_L+1=\mathsf{E}\nu_L,
\end{equation}
which together with
\eqref{MG1.2}-\eqref{MG1.5} yields the linear representations
required.

For example,
\begin{equation}\label{MG1.5.2}
\mathsf{E}\nu_L^{(2)}=\frac{1}{1-\rho_2}-\frac{1}{\mu_1}\cdot\frac{1-\rho_1}{1-\rho_2}\mathsf{E}T_L^{(1)},
\end{equation}
and
\begin{equation}\label{MG1.5.3}
\mathsf{E}T_L^{(2)}=\frac{\rho_2}{\lambda(1-\rho_2)}-\frac{\rho_2}{\lambda}
\cdot\frac{1-\rho_1}{1-\rho_2}\mathsf{E}T_L^{(1)}.
\end{equation}
As a result, the stationary probabilities $p_1$ and $p_2$ both are
expressed via $\mathsf{E}\nu_{L}^{(1)}$ as follows:
\begin{equation*}
p_1=\frac{1-\rho_2}{1+(\rho_1-\rho_2)\mathsf{E}\nu_L^{(1)}},
\end{equation*}
\begin{equation*}
p_2=\frac{\rho_2+\rho_2(\rho_1-1)\mathsf{E}\nu_L^{(1)}}{1+(\rho_1-\rho_2)\mathsf{E}\nu_L^{(1)}}
\end{equation*}
(see Section 2 of \cite{Abramov 2007} for further details). It is
interesting to note that the coefficients in linear representation
all are insensitive to the probability distribution functions
$B_1(x)$ and $B_2(x)$ and are only expressed via parameters such as
$\mu_1$, $\mu_2$ and $\lambda$.

The asymptotic behaviour of $\mathsf{E}T_L^{(1)}$ as $L\to\infty$
that given by \eqref{MG1.1} is established on the basis of the known
asymptotic behaviour of the sequence $Q_n$ as $n\to\infty$ that
given by \eqref{MG1.1.1} (see \cite{Takacs 1967}, p.22,
\cite{Postnikov 1980} as well as recent paper \cite{Abramov 2009}).
To make the paper self-contained, the necessary results about the
asymptotic behaviour of $Q_n$ as $n\to\infty$ are given in Section
\ref{Preliminaries}.

\subsection{State dependent queueing system with compound Poisson input and its characteristics}\label{MXG1}

For $M^X/GI/1/L$ queues, certain characteristics associated with
busy periods have been studied by Rosenlund \cite{Rosenlund 1973}.
Developing the results of Tomko \cite{Tomko}, Rosenlund
\cite{Rosenlund 1973} has derived the recurrence relations for the
joint Laplace-Stieltjes and $z$-transform of two-dimensional
distributions of a generalized busy period and the number of
customers served during that period. In turn, both of these
approaches \cite{Tomko} and \cite{Rosenlund 1973} are based on a
well-known Tak\'acs' method (see \cite{Takacs 1955} or \cite{Takacs
1962}).

For further analysis, \cite{Rosenlund 1973} used matrix-analytic
techniques and techniques of the theory of analytic functions. This
type of analysis is very hard and seems cannot be easily adapted for
the purposes of the present paper, where a more general model than
that from a paper \cite{Rosenlund 1973} is studied.

In this section we explain how the method of Section \ref{MG1} can be extended, and how the characteristics of the system can be expressed via the similar convolution type recurrence relations.

Notice first, that the linear representations similar to those
derived for the state dependent queueing system with ordinary
Poisson input are satisfied for the present system as well. Indeed,
equations \eqref{MG1.2}-\eqref{MG1.5} all hold in the case of the
present queueing system. The only difference is that instead of
\eqref{MG1.5.1}, the relation between $\mathsf{E}T_L$ and
$\mathsf{E}\nu_L$ should be
\begin{equation}\label{MXG1.0}
\lambda\mathsf{E}\varsigma\mathsf{E}T_L+\mathsf{E}\varsigma=\mathsf{E}\nu_L,
\end{equation}
where $\varsigma$ denotes a batch size of an arrival. (The random
variable $\varsigma$ has the distribution
$\mathsf{Pr}\{\varsigma=i\}=r_i$.) This leads to a slight change of
the linear representations mentioned in Section \ref{MG1}. The main
difficulty, however, is that the recurrence relation for
$\mathsf{E}T_L^{(1)}$ (or the corresponding quantity
$\mathsf{E}\nu_L^{(1)}$) is no longer a convolution type recurrence
relation as \eqref{MG1.1.1}. So, we should use another type of
analysis, which is explained below.

For this model, let $\widetilde T_j$, $j=1,2,\ldots, L$, denote the
time interval starting from the moment when there are $L-j+1$
customers in the system until the moment when there remain $L-j$
customers for the first time since its beginning. Similarly to the
notation used in Section \ref{MG1}, let us introduce the random
variables $\widetilde{T}_j^{(1)}$, $\widetilde{T}_j^{(2)}$,
$\widetilde \nu_j$, $\widetilde{\nu}_j^{(1)}$,
$\widetilde{\nu}_j^{(2)}$, $j=1,2,\ldots, L$, which have the same
meaning as before. Specifically, when $j$ takes the value $L$,
$\widetilde T_L$ is the length a busy period starting from a single
customer (1-busy period); $\widetilde\nu_L$ is the number of
customers that served during a 1-busy period, and so on.

With the aid of the aforementioned Tak\'{a}cs' method \cite{Takacs
1955}, \cite{Takacs 1962}, one can derive the recurrence relation
similar to that of \eqref{MG1.1}. Namely,
\begin{equation}\label{MXG1.1}
\mathsf{E}\widetilde{T}_L^{(1)}=\sum_{i=0}^{L}\mathsf{E}\widetilde{T}_{L-i+1}^{(1)}\int_0^\infty
\frac{1}{i!}\frac{\mathrm{d}^{i}f_x(z)}{\mathrm{d}z^i}\Big|_{z=0}
\mathrm{d}B_1(x),
\end{equation}
where $\mathsf{E}\widetilde{T}_0^{(1)}=\frac{1}{\mu_1}$, and the
generating function $f_x(z)$ is given by \eqref{pgfArrival}. So, the
only difference between \eqref{MG1.1} and \eqref{MXG1.1} is in their
integrands, and in particular case $r_1=1$, $r_i=0$, $i\geq2$ we
clearly arrive at the same expressions. The explicit results
associated with recurrence relation \eqref{MXG1.1} is given later in
the paper. Apparently, the similar system of equations as
\eqref{MG1.1} - \eqref{MG1.5} is satisfied for the characteristics
of the state dependent queueing system $M^X/GI/1$. Namely,
\begin{eqnarray}
\mathsf{E}\widetilde T_L&=&\mathsf{E}\widetilde T_{L}^{(1)}+\mathsf{E}\widetilde T_{L}^{(2)},\label{MXG1.2}\\
\mathsf{E}\widetilde\nu_L&=&\mathsf{E}\widetilde\nu_{L}^{(1)}+\mathsf{E}\widetilde\nu_{L}^{(2)},\label{MXG1.3}
\end{eqnarray}
\begin{eqnarray}
\mathsf{E}\widetilde T_L^{(1)}&=&\frac{1}{\mu_1}\mathsf{E}\widetilde\nu_L^{(1)},\label{MXG1.4}\\
\mathsf{E}\widetilde
T_L^{(2)}&=&\frac{1}{\mu_2}\mathsf{E}\widetilde\nu_L^{(2)}.\label{MXG1.5}
\end{eqnarray}
Therefore, the same linear representations via
$\mathsf{E}\widetilde{T}_L^{(1)}$ hold for characteristics of these
systems, where by $\rho_1$ and $\rho_2$ one now should mean the
expected numbers of \textit{arrived customers} per service time (not
the expected number of arrivals) having the probability distribution
function $B_1(x)$ and, respectively, $B_2(x)$.

Let us now consider the length of a busy period $T_L$ and associated
random variables $T_{L}^{(1)}$, $T_{L}^{(2)}$, $\nu_L$,
$\nu_{L}^{(1)}$ and $\nu_{L}^{(2)}$. Let $\varsigma_1$ denote a size
of batch that starts a busy period. (An integer random variable
$\varsigma_1$ has the distribution
$\mathsf{Pr}\{\varsigma=i\}=r_i$.) Then $T_L$ can be represented
\begin{equation}\label{MXG1.6}
T_L{\buildrel d\over =}\sum_{i=1}^{\varsigma_1\wedge
(L+1)}\widetilde{T}_{L-i+1}+\sum_{i=1}^{\varsigma_1-(L+1)}\widetilde{T}_{0,i},
\end{equation}
where 1-busy periods $\widetilde{T}_{L-i+1}$, $i=1,2,\ldots,L$ are mutually independent;

\smallskip
$\widetilde{T}_{0}$ denotes a special 1-busy period that starts from
a service time having the probability distribution function $B_1(x)$
and all other service times are mutually independent and identically
distributed random variables having the probability distribution
$B_2(x)$, and the distributions of interarrival times and batch
sizes are the same as in the original state dependent queueing
system;

\smallskip
$\widetilde{T}_{0,i}$, $i=1,2,\ldots$, is a sequence of independent and identically distributed 1-busy periods of the $M^X/G/1$ queueing system, the service times of which all are independent and identically distributed random variables having the probability distribution function $B_2(x)$, and the distributions of interarrival times and batch sizes are the same as in the original state dependent queueing system;

\smallskip
$a\wedge b$ denotes $\min\{a,b\}$;

\smallskip
${\buildrel d\over =}$ denotes the equality in distribution;

\smallskip
in the case where $\varsigma_1-(L+1)\leq0$, the empty sum in
\eqref{MXG1.6} is assumed to be zero.

\medskip
In turn, the representation for $T_L^{(1)}$ is as follows:
\begin{equation}\label{MXG1.7}
T_L^{(1)}{\buildrel d\over=}\sum_{i=1}^{\varsigma_1\wedge
(L+1)}\widetilde{T}_{L-i+1}^{(1)},
\end{equation}
where $\widetilde{T}_0^{(1)}$ denotes a single service time having
the probability distribution function $B_1(x)$. Whereas
$\mathsf{E}\widetilde{T}_L^{(1)}$ is determined by recurrence
relation \eqref{MXG1.1}, which is a particular case of
\eqref{MG1.1.1}, a convolution type recurrence relation is no longer
valid for $\mathsf{E}T_L^{(1)}$.

For the following asymptotic analysis of $\mathsf{E}T_L^{(1)}$ and
other mean characteristics such as $\mathsf{E}\nu_L^{(1)}$,
$\mathsf{E}\nu_L^{(2)}$ we will use the following techniques. We
first study the mean characteristics
$\mathsf{EE}\{T_{L}|\varsigma_1\wedge L\}$,
$\mathsf{EE}\{T_{L}^{(1)}|\varsigma_1\wedge L\}$,
$\mathsf{EE}\{T_{L}^{(2)}|\varsigma_1\wedge L\}$,
$\mathsf{EE}\{\nu_{L}^{(1)}|\varsigma_1\wedge L\}$ and
$\mathsf{EE}\{\nu_{L}^{(2)}|\varsigma_1\wedge L\}$. Then, assuming
that $L\to\infty$, we have
$\lim\limits_{L\to\infty}\mathsf{Pr}\{\varsigma_1\wedge
L=i\}=\mathsf{Pr}\{\varsigma_1=i\}$, as well as
$\lim\limits_{L\to\infty}\mathsf{E}\mathsf{E}\{T_L|\varsigma_1\wedge
L\}= \lim\limits_{L\to\infty}\mathsf{E}T_L$, and the similar limits
hold for the other mean characteristics. The following asymptotic
behaviour of the probabilities $p_1$ and $p_2$ as $L\to\infty$ is
then established similarly to that in \cite{Abramov 2007}.

Let us show the justice of the linear representations that similar
to those \eqref{MG1.5.2} and \eqref{MG1.5.3}. Write
$\mathsf{E}\widetilde{T}_L=a+b\mathsf{E}\widetilde T_L^{(1)}$, where
$a$ and $b$ are specified constants. Then, by the total probability
formula,
\begin{equation}\label{MXG1.8}
\begin{aligned}
\mathsf{E}\mathsf{E}\{T_L|\varsigma_1\wedge L\}
&=\sum_{i=1}^L\mathsf{Pr}\{\varsigma_1\wedge L=i\}\sum_{j=1}^{i}\mathsf{E}\widetilde T_{L-j+1}\\
&=\sum_{i=1}^L\mathsf{Pr}\{\varsigma_1\wedge L=i\}\sum_{j=1}^i(a+b\mathsf{E}\widetilde T_{L-i+1}^{(1)})\\
&=a\sum_{i=1}^L i\mathsf{Pr}\{\varsigma_1\wedge L=i\}+b\sum_{i=1}^L
\mathsf{Pr}\{\varsigma_1\wedge L=i\}
\sum_{j=1}^i\mathsf{E}\widetilde T_{L-i+1}^{(1)}\\
&=a\mathsf{E}(\varsigma_1\wedge
L)+b\mathsf{E}\mathsf{E}\{T_{L}^{(1)}|\varsigma_1\wedge L\}.
\end{aligned}
\end{equation}
This representation is \textit{quazi-linear} in the sense that only
for $J\geq L$ (but not for all $J\geq1$)
\begin{equation*}
\mathsf{E}\mathsf{E}\{T_J|\varsigma_1\wedge
L\}=a\mathsf{E}(\varsigma_1\wedge
L)+b\mathsf{E}\mathsf{E}\{T_{J}^{(1)}|\varsigma_1\wedge L\}.
\end{equation*}
Apparently, the similar quazi-linear representations are satisfied
for the mean characteristics
$\mathsf{E}\mathsf{E}\{\nu_L|\varsigma_1\wedge L\}$,
$\mathsf{E}\mathsf{E}\{T_L^{(2)}|\varsigma_1\wedge L\}$, and
$\mathsf{E}\mathsf{E}\{\nu_L^{(2)}|\varsigma_1\wedge L\}$ all via
$\mathsf{E}\mathsf{E}\{T_{L}^{(1)}|\varsigma_1\wedge L\}$. The exact
values of the coefficients in these quazi-linear representations
will be derived in the next sections.

\section{Asymptotic theorems for the stationary probabilities $p_1$ and $p_2$}\label{Stationary
probabilities}

In this section, the explicit expressions are derived for the
stationary probabilities, and their asymptotic behavior is
studied. These results will be used in our further findings of the
optimal solution.

\subsection{Preliminaries}\label{Preliminaries}

In this section we recall the main properties of recurrence relation
\eqref{MG1.1.1}. The detailed theory of these recurrence relations
can be found in Tak\'{a}cs \cite{Takacs 1967}. For the generating
function $Q(z)=\sum\limits_{j=0}^\infty Q_jz^j$, $|z|\leq1$ we have
\begin{equation}\label{SP4+1}
Q(z)=\frac{Q_0 F(z)}{F(z)-z},
\end{equation}
where $F(z)=\sum\limits_{j=0}^\infty f_jz^j$.

Asymptotic behavior of $Q_n$ as $n\to\infty$ has been studied by
Tak\'{a}cs \cite{Takacs 1967} and Postnikov \cite{Postnikov 1980}.
Recall the theorems that are needed in this paper.

Denote $\gamma_m=\lim_{z\uparrow1}\frac{\mathrm{d}^mF(z)}{\mathrm{d}z^m}$.

\begin{lem}\label{lem.T} \texttt{(Tak\'acs \cite{Takacs 1967}, p.22-23).} If $\gamma_1<1$ then
\begin{equation}\label{T.1}
\lim_{n\to\infty} Q_n=\frac{Q_0}{1-\gamma_1}.
\end{equation}
If $\gamma_1=1$ and $\gamma_2<\infty$, then
$$
\lim_{n\to\infty}\frac{Q_n}{n}=\frac{2Q_0}{\gamma_2}.
$$
If $\gamma_1>1$, then
\begin{equation}\label{T.3}
\lim_{n\to\infty}\left(Q_n-\frac{Q_0}{\delta^n[1-F^\prime(\delta)]}\right)=\frac{Q_0}{1-\gamma_1},
\end{equation}
where $\delta$ is the least in absolute value root of the functional
equation $z=F(z)$.
\end{lem}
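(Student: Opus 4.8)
The plan is to read the asymptotics of $Q_n$ directly off the analytic structure of its generating function $Q(z)=Q_0F(z)/[F(z)-z]$ recorded in \eqref{SP4+1}. Because $F$ is a probability generating function, it is increasing and convex on $[0,1]$ with $F(0)=f_0>0$ and $F(1)=1$, the coefficients $Q_n$ are nonnegative (an immediate induction on \eqref{MG1.1.1} using $Q_0>0$, $f_0>0$, $f_i\ge0$), and the singularities of $Q(z)$ are exactly the zeros of $h(z):=F(z)-z$. The whole argument reduces to (i) locating the zero of $h$ of smallest modulus and determining its order, and (ii) transferring this local information into the growth of the Taylor coefficients $Q_n$.

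First I would settle the real zeros of $h$ on $[0,1]$ using convexity, since $h''=F''\ge0$ and $h'(1)=\gamma_1-1$. If $\gamma_1<1$ then $h'<0$ on $[0,1]$, so $h>0$ on $[0,1)$ and $z=1$ is a \emph{simple} zero; if $\gamma_1=1$ then $h(1)=h'(1)=0$ while $h''(1)=\gamma_2>0$ (note $\gamma_2>0$ is forced by $f_0>0$), so $z=1$ is a \emph{double} zero; if $\gamma_1>1$ then, besides the simple zero at $z=1$, convexity produces exactly one further simple zero $\delta\in(0,1)$ with $h'(\delta)=F'(\delta)-1<0$. To promote the relevant real zero to the \emph{dominant} singularity I would argue on circles $|z|=r$: since $|F(z)|\le F(|z|)$, the bound $|F(z)|\le F(r)$ holds on $|z|=r$. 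For $\gamma_1>1$ one has $F(1-\eps)<1-\eps$ for small $\eps>0$, hence $|F(z)|\le F(1-\eps)<1-\eps=|z|$ on $|z|=1-\eps$, and Rouch\'e's theorem gives exactly one zero of $h$ inside, namely $\delta$; thus $\delta$ is the unique zero of least modulus and $z=1$ is the next. For $\gamma_1\le1$, $F$ maps the closed unit disk into itself, and the classical description of the fixed points of such a self-map (together with the aperiodicity furnished by positivity of the $f_i$) shows that $z=1$ is the only zero of $h$ in $|z|\le1$.

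With the dominant singularity classified, I would extract the coefficient asymptotics by subtracting principal parts. A residue computation at the simple pole $z=1$ gives $\lim_{z\to1}(z-1)Q(z)=Q_0F(1)/h'(1)=Q_0/(\gamma_1-1)$, so this pole contributes the constant $Q_0/(1-\gamma_1)$ to $Q_n$; when $\gamma_1<1$ it is the only singularity in the closed disk, which yields \eqref{T.1}. When $\gamma_1>1$ the pole at $\delta$ has principal part $Q_0\delta/[(F'(\delta)-1)(z-\delta)]$, whose $n$-th Taylor coefficient is precisely $Q_0/\{\delta^n[1-F'(\delta)]\}$; subtracting both principal parts (at $\delta$ and at $1$) leaves a remainder whose coefficients tend to $0$, which is exactly \eqref{T.3}. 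In the critical case $\gamma_1=1$ the double zero gives the local expansion $h(z)=\tfrac12\gamma_2(z-1)^2+O((z-1)^3)$, so $Q(z)\sim 2Q_0/[\gamma_2(1-z)^2]$ as $z\uparrow1$; since $1/(1-z)^2=\sum_n(n+1)z^n$ this forces $Q_n/n\to 2Q_0/\gamma_2$.

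The step I expect to be the main obstacle is the rigorous transfer from these local expansions at the boundary to genuine coefficient asymptotics, because the series $F$ may have radius of convergence exactly $1$, so one cannot simply invoke analyticity of the remainder in a larger disk to obtain geometric decay of its coefficients. In the critical case this is cleanly resolved by Karamata's Tauberian theorem, which applies thanks to $Q_n\ge0$ and upgrades $Q(z)\sim 2Q_0/[\gamma_2(1-z)^2]$ to $Q_n\sim(2Q_0/\gamma_2)\,n$. In the cases $\gamma_1\ne1$ I would control the remainder after subtraction of the principal part(s) by a Tauberian argument of the type made quantitative later by Postnikov, the genuinely delicate point being to exclude further zeros of $h$ on the circle $|z|=1$, where the aperiodicity implied by the strict positivity of the $f_i$ is exactly what is required.
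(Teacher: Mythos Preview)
The paper does not prove this lemma at all: it is quoted verbatim from Tak\'acs \cite{Takacs 1967}, pp.~22--23, as a known result, so there is no ``paper's own proof'' to compare against. Your sketch is therefore being assessed on its own merits as a proof of a classical fact.

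The outline via the singularity structure of $Q(z)=Q_0F(z)/[F(z)-z]$ is sound in spirit, and the case analysis of the zeros of $h(z)=F(z)-z$ on $[0,1]$ by convexity is correct. Two points deserve tightening. First, in the critical case $\gamma_1=1$ you invoke Karamata ``thanks to $Q_n\ge0$'', but nonnegativity alone yields only the partial-sum asymptotic $\sum_{k\le n}Q_k\sim (Q_0/\gamma_2)n^2$; to deduce $Q_n/n\to 2Q_0/\gamma_2$ you need monotonicity of $Q_n$. This is true and easy: from $f_0Q_{n+1}=Q_n-\sum_{i=1}^n f_iQ_{n+1-i}$ one gets $f_0(Q_{n+1}-Q_n)=\sum_{i\ge1}f_i(Q_n-Q_{n+1-i})+Q_n\sum_{i>n}f_i\ge0$ by induction, but it should be said. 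Second, you justify the absence of other unit-circle zeros of $h$ by ``strict positivity of the $f_i$'', which is not assumed (only $f_0>0$, $f_i\ge0$). The conclusion nevertheless holds from $f_0>0$ alone: if $|z|=1$ and $|F(z)|=1$ then all nonzero terms $f_iz^i$ share the same argument, and since $f_0>0$ that argument is $0$, forcing $F(z)=1$; hence $F(z)=z$ on $|z|=1$ implies $z=1$.

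The gap you yourself flag is the substantive one: when $F$ has radius of convergence exactly $1$, subtracting principal parts does not automatically leave a remainder with $o(1)$ coefficients, because there is no larger disk of analyticity to exploit. For $\gamma_1<1$ (and hence for the remainder in the $\gamma_1>1$ case after peeling off the pole at $\delta$) the clean route is not singularity analysis but the discrete renewal theorem: rewriting \eqref{MG1.1.1} as a renewal-type equation for the differences $Q_{n+1}-Q_n$ and invoking Erd\H{o}s--Feller--Pollard gives convergence of $Q_n$ directly, without any analyticity beyond $|z|=1$. Tak\'acs's own argument in \cite{Takacs 1967} proceeds essentially this way (combinatorial/renewal rather than transfer-theorem style), so if you want a self-contained proof you should replace the vague appeal to ``a Tauberian argument of the type made quantitative later by Postnikov'' with the renewal theorem.
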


\begin{lem}\label{lem.P} \texttt{(Postnikov \cite{Postnikov 1980}, Sect.25).} Let $\gamma_1=1$, $\gamma_2<\infty$ and $f_0+f_1<1$. Then, as $n\to\infty$,
\begin{equation}\label{P.1}
Q_{n+1}-Q_n=\frac{2Q_0}{\gamma_2}+o(1).
\end{equation}
\end{lem}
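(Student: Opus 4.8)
The plan is to pass to the generating function of the difference sequence $d_n=Q_{n+1}-Q_n$, read off a simple pole at $z=1$ from the behaviour of $Q(z)$ there, and then upgrade the resulting Abelian (Cesàro) statement of Lemma \ref{lem.T} to genuine coefficientwise convergence by a Tauberian argument, using $f_0+f_1<1$ to control the behaviour on the rest of the unit circle.

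First I would form, directly from $\sum_{n\geq0}Q_{n+1}z^n=(Q(z)-Q_0)/z$ and \eqref{SP4+1},
\begin{equation*}
D(z)=\sum_{n=0}^\infty d_n z^n=\frac{(1-z)Q(z)-Q_0}{z}.
\end{equation*}
Next, using $F(1)=1$, $\gamma_1=1$ and $\gamma_2<\infty$, I would Taylor-expand $F$ at $z=1$ to obtain $F(z)-z=\tfrac{\gamma_2}{2}(1-z)^2+o\!\left((1-z)^2\right)$ as $z\to1$. Combined with \eqref{SP4+1} this gives $Q(z)\sim 2Q_0/[\gamma_2(1-z)^2]$, hence $(1-z)Q(z)\sim 2Q_0/[\gamma_2(1-z)]$, so that $D$ has a simple pole at $z=1$ with principal part $\frac{2Q_0}{\gamma_2}\cdot\frac{1}{1-z}$. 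Since every coefficient of $\tfrac{1}{1-z}$ equals $1$, this already singles out the candidate limit $2Q_0/\gamma_2$ and, after one summation, reproduces the Cesàro statement of Lemma \ref{lem.T}.

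The decisive step is to show that $z=1$ is the only singularity of $D$ on $\abs{z}=1$ and that, after removing the principal part, the remainder has vanishing coefficients. Here the hypothesis $f_0+f_1<1$ enters: it forces $\gamma_2=\sum_{i\geq2}i(i-1)f_i>0$, so that $2Q_0/\gamma_2$ is well defined and the pole at $z=1$ is of order exactly two, and it rules out the degenerate affine case $F(z)=z$. Together with $f_0>0$, the strict triangle inequality $\abs{F(z)}\leq\sum_i f_i=1=\abs{z}$ on $\abs{z}=1$ forces any solution of $F(z)=z$ on the circle to satisfy $F(z)=1$, hence $z=1$. Consequently $z=1$ is the sole boundary singularity of $D$, and
\begin{equation*}
R(z)=D(z)-\frac{2Q_0}{\gamma_2}\cdot\frac{1}{1-z}
\end{equation*}
extends continuously, with some Hölder regularity dictated by the order of the expansion above, to the closed disc. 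A Riemann--Lebesgue/Tauberian argument then gives $[z^n]R(z)\to0$, so that $d_n=\frac{2Q_0}{\gamma_2}+o(1)$, which is precisely \eqref{P.1}.

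The hard part is exactly this last upgrade from Abelian to coefficientwise asymptotics. The local expansion at $z=1$ by itself yields only the averaged convergence of Lemma \ref{lem.T}; to obtain genuine $o(1)$ for the \emph{individual} differences one must control $R$ on the entire circle and establish enough regularity at $z=1$ for the Tauberian step to apply. This is where $f_0+f_1<1$ is indispensable, excluding the periodic or degenerate configurations that would otherwise contribute non-decaying oscillations to $d_n$, while the finiteness of $\gamma_2$ pins the singularity at $z=1$ as a simple pole of $D$ with residue exactly $2Q_0/\gamma_2$ and keeps the remainder of strictly lower order.
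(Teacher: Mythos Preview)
The paper does not prove this lemma at all; it is quoted verbatim as Postnikov's Tauberian theorem (reference \cite{Postnikov 1980}, Sect.~25) and used as a black box. So there is no ``paper's own proof'' to compare against, and your attempt should be judged on its own merits.

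Your outline identifies the right objects --- the generating function $D(z)$ of the differences, the double pole of $Q(z)$ at $z=1$, and the need to upgrade the Abelian/Ces\`aro statement of Lemma~\ref{lem.T} to genuine coefficientwise convergence --- but the decisive step is not actually carried out. From $\gamma_2<\infty$ alone you only get $F(z)-z=\tfrac{\gamma_2}{2}(1-z)^2+o\big((1-z)^2\big)$, hence $R(z)=D(z)-\tfrac{2Q_0}{\gamma_2}\,\tfrac{1}{1-z}=o\big(\tfrac{1}{1-z}\big)$ as $z\to1$. That is an Abelian bound, and it does \emph{not} by itself yield $[z^n]R(z)\to0$; your assertion that $R$ ``extends continuously, with some H\"older regularity'' to the closed disc is precisely what is unproved. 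H\"older behaviour at $z=1$ would typically require a higher moment such as $\gamma_3<\infty$, which is not assumed. The genuine content of Postnikov's theorem is exactly this Tauberian upgrade under only $\gamma_2<\infty$, and your sketch stops at the point where that work begins.

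A secondary remark: your use of $f_0+f_1<1$ is partly misdirected. The uniqueness of the root of $F(z)=z$ on $\abs{z}=1$ follows already from $f_0>0$ via the equality case of the triangle inequality (as your own argument shows: equality forces $F(z)=1$, hence $z=1$), so $f_0+f_1<1$ is not needed there. Its visible role is to guarantee $\gamma_2>0$, without which the statement is vacuous; any deeper role it plays in Postnikov's argument lies inside the Tauberian machinery that you have not reproduced.
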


\subsection{Exact formulae for $p_1$ and $p_2$}\label{Preliminary asymp}

In this section we derive exact representations for $p_1$ and $p_2$
via $\mathsf{E}\nu_L^{(1)}$. We also obtain some preliminary
asymptotic representations that easily follow from the explicit
results. Those asymptotic representations will be used in the
sequel.

We first start from the linear representations for
$\mathsf{E}\widetilde\nu_L^{(2)}$ in terms
$\mathsf{E}\widetilde\nu_L^{(1)}$. Namely, we have the following
lemma.

\begin{lem}\label{representation}
For $\mathrm{E}\widetilde{\nu}_L^{(2)}$, $L=1,2,\ldots$, we have the
following representation
\begin{equation}\label{SP4.0}
\mathsf{E}\widetilde{\nu}_L^{(2)}=\frac{1}{1-\rho_2}-\frac{1-\rho_1}{1-\rho_2}\mathsf{E}\widetilde\nu_L^{(1)},
\end{equation}
where $\rho_1=\frac{\lambda\mathsf{E}\varsigma}{\mu_1}$ and
$\rho_2=\frac{\lambda\mathsf{E}\varsigma}{\mu_2}<1$, and
$\mathsf{E}\widetilde\nu_L^{(1)}$ is given by
\begin{equation}\label{SP4.1}
\begin{aligned}
\mathsf{E}\widetilde{\nu}_L^{(1)}
&=\sum_{i=0}^{L}\mathsf{E}\widetilde{\nu}_{L-i+1}^{(1)}\int_0^\infty
\frac{1}{i!}\frac{\mathrm{d}^{i}f_x(z)}{\mathrm{d}z^i}\Big|_{z=0}
\mathrm{d}B_1(x),
\end{aligned}
\end{equation}
$\mathsf{E}\widetilde\nu_0^{(1)}=1$.
\end{lem}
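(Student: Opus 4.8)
The plan is to establish the recurrence relation \eqref{SP4.1} first, and then derive the linear representation \eqref{SP4.0} from the system of equations \eqref{MXG1.2}--\eqref{MXG1.5} together with the batch analogue \eqref{MXG1.0} of the arrival identity. The recurrence \eqref{SP4.1} is structurally identical to the recurrence \eqref{MXG1.1} for $\mathsf{E}\widetilde{T}_L^{(1)}$, so the first step is to observe that $\widetilde{\nu}_L^{(1)}$ counts the customers served under $B_1(x)$ during the interval $\widetilde{T}_L^{(1)}$. By the same level-crossing argument underlying Tak\'acs' method \cite{Takacs 1955}, \cite{Takacs 1962}, the quantity $\mathsf{E}\widetilde{\nu}_L^{(1)}$ satisfies a convolution-type recurrence of the form \eqref{MG1.1.1} with the same kernel $f_i=\int_0^\infty\frac{1}{i!}\frac{\mathrm{d}^if_x(z)}{\mathrm{d}z^i}|_{z=0}\,\mathrm{d}B_1(x)$ that appears in \eqref{MXG1.1}; indeed Wald's identity \eqref{MXG1.4} shows $\mathsf{E}\widetilde{T}_L^{(1)}=\frac{1}{\mu_1}\mathsf{E}\widetilde{\nu}_L^{(1)}$, so dividing \eqref{MXG1.1} by $1/\mu_1$ transfers the recurrence verbatim to $\mathsf{E}\widetilde{\nu}_L^{(1)}$, with initial condition $\mathsf{E}\widetilde{\nu}_0^{(1)}=\mu_1\mathsf{E}\widetilde{T}_0^{(1)}=\mu_1\cdot\frac{1}{\mu_1}=1$.

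For the main identity \eqref{SP4.0}, the approach is purely algebraic once the linear relations are in place. I would start from \eqref{MXG1.3}, $\mathsf{E}\widetilde{\nu}_L=\mathsf{E}\widetilde{\nu}_L^{(1)}+\mathsf{E}\widetilde{\nu}_L^{(2)}$, and combine it with \eqref{MXG1.0} applied to the $1$-busy period, which reads $\lambda\mathsf{E}\varsigma\,\mathsf{E}\widetilde{T}_L+\mathsf{E}\varsigma=\mathsf{E}\widetilde{\nu}_L$ (since a $1$-busy period is initiated by a single arriving batch of mean size $\mathsf{E}\varsigma$). Substituting \eqref{MXG1.2} for $\mathsf{E}\widetilde{T}_L$ and then using Wald's equations \eqref{MXG1.4} and \eqref{MXG1.5} to replace $\mathsf{E}\widetilde{T}_L^{(1)}=\frac{1}{\mu_1}\mathsf{E}\widetilde{\nu}_L^{(1)}$ and $\mathsf{E}\widetilde{T}_L^{(2)}=\frac{1}{\mu_2}\mathsf{E}\widetilde{\nu}_L^{(2)}$ expresses everything in terms of $\mathsf{E}\widetilde{\nu}_L^{(1)}$ and $\mathsf{E}\widetilde{\nu}_L^{(2)}$. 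Recalling $\rho_k=\frac{\lambda\mathsf{E}\varsigma}{\mu_k}$, the resulting identity is $\mathsf{E}\widetilde{\nu}_L^{(1)}+\mathsf{E}\widetilde{\nu}_L^{(2)}=\rho_1\mathsf{E}\widetilde{\nu}_L^{(1)}+\rho_2\mathsf{E}\widetilde{\nu}_L^{(2)}+\mathsf{E}\varsigma$, and I would then solve the single linear equation for $\mathsf{E}\widetilde{\nu}_L^{(2)}$, collecting the coefficient of $\mathsf{E}\widetilde{\nu}_L^{(1)}$ to obtain $\frac{1-\rho_1}{1-\rho_2}$ and the constant term to obtain $\frac{1}{1-\rho_2}$, matching \eqref{SP4.0}. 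The condition $\rho_2<1$ guarantees the division is legitimate.

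The main obstacle I anticipate is not the algebra but the careful justification of the $1$-busy-period arrival identity $\mathsf{E}\widetilde{\nu}_L=\lambda\mathsf{E}\varsigma\,\mathsf{E}\widetilde{T}_L+\mathsf{E}\varsigma$. Unlike the ordinary busy period, where \eqref{MXG1.0} is stated for $T_L$, one must verify that the analogue holds with the correct constant term $\mathsf{E}\varsigma$ for the $1$-busy period $\widetilde{T}_L$. The subtlety is that a $1$-busy period begins with exactly one \emph{batch}, not one customer, so the number served equals one initiating batch (of mean size $\mathsf{E}\varsigma$) plus the arrivals accumulated over $\widetilde{T}_L$ (with mean $\lambda\mathsf{E}\varsigma\,\mathsf{E}\widetilde{T}_L$ by Wald applied to the compound Poisson stream). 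Establishing this with full rigor requires a renewal-reward or Wald-type argument on the batch-arrival structure, and getting the additive constant exactly right is where care is needed; the rest follows by elementary linear manipulation.
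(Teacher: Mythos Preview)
Your overall strategy---deriving \eqref{SP4.1} from \eqref{MXG1.1} via Wald's identity \eqref{MXG1.4}, and then obtaining \eqref{SP4.0} by combining \eqref{MXG1.2}--\eqref{MXG1.5} with an arrival-counting identity---is exactly the paper's approach. However, there is a genuine error in your additive constant, precisely at the point you flagged as delicate.

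You assert that a $1$-busy period is initiated by a single \emph{batch} of mean size $\mathsf{E}\varsigma$, and hence write $\mathsf{E}\widetilde{\nu}_L=\lambda\mathsf{E}\varsigma\,\mathsf{E}\widetilde{T}_L+\mathsf{E}\varsigma$. But by the paper's definition in Section~\ref{MXG1}, $\widetilde{T}_L$ is the passage time from state $1$ down to state $0$; it is a busy period starting from a \emph{single customer}, not a single batch. (The full busy period $T_L$, which starts from a batch, is handled separately via \eqref{MXG1.6}--\eqref{MXG1.7}.) Consequently the number served during $\widetilde{T}_L$ is the one initiating customer plus all customers arriving during $\widetilde{T}_L$, so the correct identity is
\[
\mathsf{E}\widetilde{\nu}_L \;=\; 1 + \lambda\,\mathsf{E}\varsigma\,\mathsf{E}\widetilde{T}_L,
\]
with additive constant $1$, not $\mathsf{E}\varsigma$. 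Carrying your own algebra through with your constant would yield $\mathsf{E}\widetilde{\nu}_L^{(2)}=\dfrac{\mathsf{E}\varsigma}{1-\rho_2}-\dfrac{1-\rho_1}{1-\rho_2}\mathsf{E}\widetilde{\nu}_L^{(1)}$, which disagrees with \eqref{SP4.0} whenever $\mathsf{E}\varsigma\neq1$. With the corrected constant $1$, the same linear manipulation gives $(1-\rho_1)\mathsf{E}\widetilde{\nu}_L^{(1)}+(1-\rho_2)\mathsf{E}\widetilde{\nu}_L^{(2)}=1$ and hence \eqref{SP4.0} exactly. Everything else in your outline is fine.
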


\begin{proof}
Taking into account that the number of arrivals during 1-busy cycle
(1-busy period plus idle period) coincides with the number of
customers served during the same 1-busy period, according to Wald's
identity we have:
$$
\lambda\left(\mathsf{E}\widetilde{T}_L+\frac{1}{\lambda}\right)=\lambda\mathsf{E}\widetilde{T}_L+1
=\mathsf{E}\widetilde\nu_L=\mathsf{E}\widetilde\nu_L^{(1)}+\mathsf{E}\widetilde\nu_L^{(2)}.
$$
This equality together with \eqref{MXG1.2}-\eqref{MXG1.5} yields the
desired statement of the lemma, where \eqref{SP4.1} in turn follows
from \eqref{MXG1.1} and Wald's identity \eqref{MXG1.4}.
\end{proof}

The next step is to derive representations for
$\mathsf{EE}\{\nu_L^{(1)}|\varsigma_1\wedge L\}$ and
$\mathsf{EE}\{\nu_L^{(2)}|\varsigma_1\wedge L\}$. We have the
following lemma.
\begin{lem}\label{extended representation}
For $\mathsf{EE}\{\nu_L^{(2)}|\varsigma_1\wedge L\}$ we have
\begin{equation}\label{SP4.2}
\mathsf{EE}\{\nu_L^{(2)}|\varsigma_1\wedge
L\}=\frac{\mathsf{E}(\varsigma_1\wedge
L)}{1-\rho_2}-\frac{1-\rho_1}{1-\rho_2}\mathsf{EE}\{\nu_L^{(1)}|\varsigma_1\wedge
L\},
\end{equation}
where
\begin{equation*}\label{SP4.3}
\mathsf{EE}\{\nu_L^{(1)}|\varsigma_1\wedge
L\}=\sum_{i=1}^L\mathsf{Pr}\{\varsigma_1\wedge L=i\}
\sum_{j=1}^i\mathsf{E}\widetilde{\nu}_{L-j+1}^{(1)},
\end{equation*}
and $\mathsf{E}\widetilde{\nu}_{L-j+1}^{(1)}$, $j=1,2,\ldots,L$, are
given by \eqref{SP4.1}.
\end{lem}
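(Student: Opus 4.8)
The plan is to derive \eqref{SP4.2} by combining the conditional version of the linear representation \eqref{SP4.0} from Lemma \ref{representation} with the \emph{quazi-linear} aggregation mechanism already exhibited in \eqref{MXG1.8}. The key observation is that Lemma \ref{representation} gives, for each fixed state index, an exact affine relation between $\mathsf{E}\widetilde{\nu}_m^{(2)}$ and $\mathsf{E}\widetilde{\nu}_m^{(1)}$ with coefficients $\frac{1}{1-\rho_2}$ and $-\frac{1-\rho_1}{1-\rho_2}$ that are \emph{independent of} $m$. This state-independence is exactly what makes the summation over the busy-period decomposition \eqref{MXG1.6}--\eqref{MXG1.7} go through cleanly.

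First I would write out the decomposition of $\nu_L^{(2)}$ induced by the initial batch $\varsigma_1$: conditioned on $\varsigma_1\wedge L = i$, the busy period initiated by the batch splits into $i$ independent sub-busy-periods started from successive levels, so that $\nu_L^{(2)}{\buildrel d\over=}\sum_{j=1}^{i}\widetilde{\nu}_{L-j+1}^{(2)}$, in direct parallel with \eqref{MXG1.7}. Taking expectations and then summing against $\mathsf{Pr}\{\varsigma_1\wedge L=i\}$ yields
\begin{equation*}
\mathsf{EE}\{\nu_L^{(2)}|\varsigma_1\wedge L\}
=\sum_{i=1}^L\mathsf{Pr}\{\varsigma_1\wedge L=i\}\sum_{j=1}^i\mathsf{E}\widetilde{\nu}_{L-j+1}^{(2)}.
\end{equation*}
Second, I would substitute the per-level identity \eqref{SP4.0}, namely $\mathsf{E}\widetilde{\nu}_{L-j+1}^{(2)}=\frac{1}{1-\rho_2}-\frac{1-\rho_1}{1-\rho_2}\mathsf{E}\widetilde{\nu}_{L-j+1}^{(1)}$, into the inner sum. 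Because the constant term $\frac{1}{1-\rho_2}$ does not depend on $j$, the inner sum over $j=1,\dots,i$ contributes exactly $\frac{i}{1-\rho_2}$; summing this against $\mathsf{Pr}\{\varsigma_1\wedge L=i\}$ produces $\frac{\mathsf{E}(\varsigma_1\wedge L)}{1-\rho_2}$, which is the first term on the right of \eqref{SP4.2}. The remaining piece carries the factor $-\frac{1-\rho_1}{1-\rho_2}$ outside both sums and leaves precisely $\sum_{i=1}^L\mathsf{Pr}\{\varsigma_1\wedge L=i\}\sum_{j=1}^i\mathsf{E}\widetilde{\nu}_{L-j+1}^{(1)}$, which is the stated expression for $\mathsf{EE}\{\nu_L^{(1)}|\varsigma_1\wedge L\}$.

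The main obstacle, and the step I would want to justify most carefully, is the very first decomposition $\nu_L^{(2)}{\buildrel d\over=}\sum_{j=1}^{\varsigma_1\wedge(L+1)}\widetilde{\nu}_{L-j+1}^{(2)}$: one must verify that, under the batch-arrival dynamics, the counts of type-$B_2$ services accumulated in the sub-busy-periods are genuinely independent and that each sub-period started at level $L-j+1$ has the law of $\widetilde{\nu}_{L-j+1}^{(2)}$. This is the analogue for the \emph{overflow count} of the busy-period representation \eqref{MXG1.6}--\eqref{MXG1.7} that the paper has already set up for $T_L$ and $T_L^{(1)}$, and it rests on the Tak\'acs-style level-crossing argument together with the lack-of-memory property of the exponential interarrival clock. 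Everything downstream of that decomposition is the purely algebraic recombination just described, and it terminates once one recognizes that the $m$-independence of the coefficients in \eqref{SP4.0} is what lets the per-level identity be summed term by term without incurring any state-dependent correction. Note also that the boundary case $\varsigma_1 - (L+1) > 0$, which in \eqref{MXG1.6} contributes the extra $\widetilde{T}_{0,i}$ terms, does not add type-$B_1$ services and is already absorbed by truncating at $\varsigma_1\wedge L$; I would remark briefly on why these overflow pieces contribute only to the $\rho_2$-governed part and are consistently accounted for by the constant term $\frac{1}{1-\rho_2}$.
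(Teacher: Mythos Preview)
Your proposal is correct and follows essentially the same route as the paper: the paper's proof simply invokes the quazi-linear mechanism \eqref{MXG1.8} to assert $\mathsf{EE}\{\nu_L^{(2)}|\varsigma_1\wedge L\}=a\,\mathsf{E}(\varsigma_1\wedge L)+b\,\mathsf{EE}\{\nu_L^{(1)}|\varsigma_1\wedge L\}$ and then reads off $a=\tfrac{1}{1-\rho_2}$, $b=-\tfrac{1-\rho_1}{1-\rho_2}$ from \eqref{SP4.0}, whereas you spell out that same computation explicitly. Your extra care about the decomposition of $\nu_L^{(2)}$ and the overflow case is appropriate but not strictly needed beyond what the paper already packaged into \eqref{MXG1.8}.
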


\begin{proof} Following the same arguments as in \eqref{MXG1.8}, one can write
$$\mathsf{EE}\{\nu_L^{(2)}|\varsigma_1\wedge L\}
=a\mathsf{E}(\varsigma_1\wedge
L)+b\mathsf{EE}\{\nu_L^{(1)}|\varsigma_1\wedge L\}$$ for specified
constants $a$ and $b$ for which the linear representation
$\mathsf{E}\widetilde\nu_L^{(2)}=a+b\mathsf{E}\widetilde\nu_L^{(1)}$
is satisfied. Hence, according to relation \eqref{SP4.0} of Lemma
\ref{representation}, $a=\frac{1}{1-\rho_2}$ and
$b=-\frac{1-\rho_1}{1-\rho_2}$. The proof is completed.
\end{proof}

The following lemma yields exact estimates for the difference
$\mathsf{E}\nu_L^{(1)}-\mathsf{EE}\{\nu_L^{(1)}|\varsigma_1\wedge
L\}$.

\begin{lem}\label{estimate}
We have the following estimate:
\begin{equation}\label{SP4.4}
\mathsf{E}\nu_L^{(1)}-\mathsf{EE}\{\nu_L^{(1)}|\varsigma_1\wedge
L\}=\mathsf{Pr}\{\varsigma_1>L\},
\end{equation}
\end{lem}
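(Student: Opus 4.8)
The plan is to exploit the additive decomposition of $\nu_L^{(1)}$ over the initial batch, in exact parallel with the representation \eqref{MXG1.7} for $T_L^{(1)}$. Since the number of customers served by $B_1(x)$ is accumulated from the sub-periods $\widetilde{T}_{L-i+1}$ — the overflow periods $\widetilde{T}_{0,i}$ being served entirely by $B_2(x)$ and thus contributing no $B_1$-service — one has
\[
\nu_L^{(1)}{\buildrel d\over=}\sum_{i=1}^{\varsigma_1\wedge(L+1)}\widetilde{\nu}_{L-i+1}^{(1)},\qquad \mathsf{E}\widetilde{\nu}_0^{(1)}=1.
\]
First I would introduce the partial sums $S_m=\sum_{j=1}^{m}\mathsf{E}\widetilde{\nu}_{L-j+1}^{(1)}$ for $m=1,\dots,L+1$, where the term at $m=L+1$ equals $\mathsf{E}\widetilde{\nu}_0^{(1)}=1$. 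Conditioning on $\varsigma_1$ and using that the sub-periods are independent of the initial batch, the unconditional mean is
\[
\mathsf{E}\nu_L^{(1)}=\sum_{k=1}^{\infty}r_k\,S_{k\wedge(L+1)}=\sum_{k=1}^{L}r_k S_k+\mathsf{Pr}\{\varsigma_1>L\}\,S_{L+1}.
\]

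Second, I would rewrite the conditional statistic of Lemma \ref{extended representation}. Because $\mathsf{Pr}\{\varsigma_1\wedge L=i\}=r_i$ for $i<L$ while $\mathsf{Pr}\{\varsigma_1\wedge L=L\}=r_L+\mathsf{Pr}\{\varsigma_1>L\}$, the double sum there collapses to
\[
\mathsf{EE}\{\nu_L^{(1)}|\varsigma_1\wedge L\}=\sum_{i=1}^{L}r_i S_i+\mathsf{Pr}\{\varsigma_1>L\}\,S_L.
\]
Subtracting this from the previous display, the common part $\sum_{i=1}^{L}r_i S_i$ cancels, leaving $\mathsf{Pr}\{\varsigma_1>L\}\,(S_{L+1}-S_L)$. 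Finally, $S_{L+1}-S_L=\mathsf{E}\widetilde{\nu}_0^{(1)}=1$, which produces the asserted identity \eqref{SP4.4}.

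The one point that genuinely needs care — and what I expect to be the crux — is the asymmetry between truncating at $\varsigma_1\wedge(L+1)$ in the true decomposition of $\nu_L^{(1)}$ and at $\varsigma_1\wedge L$ in the conditional quantity of Lemma \ref{extended representation}. The entire discrepancy is the single boundary term $\mathsf{E}\widetilde{\nu}_0^{(1)}=1$: on the event $\{\varsigma_1>L\}$ the $(L+1)$-st customer of the opening batch is the unique $B_1$-served customer of the special overflow period $\widetilde{T}_0$, whereas every further customer of that overflow is served by $B_2(x)$ and contributes nothing to $\nu_L^{(1)}$. Verifying that this missing contribution is exactly $1$, weighted by the overflow probability $\mathsf{Pr}\{\varsigma_1>L\}$, is the heart of the argument; the remaining manipulations are bookkeeping on the partial sums $S_m$.
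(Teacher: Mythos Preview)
Your argument is correct and is essentially the paper's own proof: both start from the decomposition $\nu_L^{(1)}\overset{d}{=}\sum_{i=1}^{\varsigma_1\wedge(L+1)}\widetilde{\nu}_{L-i+1}^{(1)}$, compute the two expectations as sums over the distribution of the truncated batch, and subtract to leave $\mathsf{Pr}\{\varsigma_1>L\}\cdot\mathsf{E}\widetilde{\nu}_0^{(1)}=\mathsf{Pr}\{\varsigma_1>L\}$. Your use of the partial sums $S_m$ streamlines the bookkeeping slightly, but the decomposition, the boundary term, and the cancellation are identical to those in the paper.
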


\begin{proof} Similarly to \eqref{MXG1.7} we have
\begin{equation*}\label{SP4.6}
\nu_L^{(1)}{\buildrel
d\over=}\sum_{i=1}^{\varsigma_1\wedge(L+1)}\widetilde\nu_{L-i+1}^{(1)},
\end{equation*}
where $\widetilde\nu_{L-i+1}^{(1)}$, $i=1,2,\ldots,L$ are mutually
independent, and $\widetilde\nu_0^{(1)}=1$. Hence,
\begin{equation}\label{SP4.7}
\mathsf{E}\nu_L^{(1)}=\sum_{i=1}^{L+1}\mathsf{Pr}\{\varsigma_1\wedge(L+1)=i\}
\sum_{j=1}^i\mathsf{E}\widetilde\nu_{L-j+1}^{(1)}.
\end{equation}
In turn, the representation for
$\mathsf{EE}\{\nu_L^{(1)}|\varsigma_1\wedge L\}$ is
\begin{equation}\label{SP4.8}
\mathsf{EE}\{\nu_L^{(1)}|\varsigma_1\wedge
L\}=\sum_{i=1}^{L}\mathsf{Pr}\{\varsigma_1\wedge L=i\}
\sum_{j=1}^i\mathsf{E}\widetilde\nu_{L-j+1}^{(1)}.
\end{equation}
Subtracting \eqref{SP4.8} from \eqref{SP4.7} we obtain:
\begin{equation*}
\begin{aligned}
\mathsf{E}\nu_L^{(1)}-\mathsf{EE}\{\nu_L^{(1)}|\varsigma_1\wedge
L\}&=\mathsf{Pr}\{\varsigma_1=L\}\sum_{j=1}^L\mathsf{E}\widetilde\nu_j^{(1)}
+\mathsf{Pr}\{\varsigma_1>L\}\sum_{j=0}^L\mathsf{E}\widetilde\nu_j^{(1)}\\
&\ \ \ -\mathsf{Pr}\{\varsigma_1\geq L\}\sum_{j=1}^L\mathsf{E}\widetilde\nu_j^{(1)}\\
&=\mathsf{Pr}\{\varsigma_1>L\}.
\end{aligned}
\end{equation*}
Relation \eqref{SP4.4} is proved.
\end{proof}

From Lemma \ref{estimate} we have the following important corollary.

\begin{cor}\label{Asymp estimate} As $L\to\infty$,
\begin{equation}\label{SP4.17}
\mathsf{E}\nu_L^{(1)}-\mathsf{EE}\{\nu_L^{(1)}|\varsigma_1\wedge
L\}=o(1),
\end{equation}
and
\begin{equation}\label{SP4.18}
\mathsf{E}\nu_L^{(2)}-\mathsf{EE}\{\nu_L^{(2)}|\varsigma_1\wedge
L\}=o(1).
\end{equation}
\end{cor}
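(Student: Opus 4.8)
The plan is to obtain \eqref{SP4.17} directly from Lemma \ref{estimate} and then to deduce \eqref{SP4.18} from it by means of the linear representations already established.

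First, for \eqref{SP4.17}, Lemma \ref{estimate} gives the exact identity $\mathsf{E}\nu_L^{(1)}-\mathsf{EE}\{\nu_L^{(1)}|\varsigma_1\wedge L\}=\mathsf{Pr}\{\varsigma_1>L\}$. Since $\varsigma_1$ is a proper integer-valued random variable with $\mathsf{Pr}\{\varsigma_1=i\}=r_i$ and $\sum_{i=1}^\infty r_i=1$, the tail $\mathsf{Pr}\{\varsigma_1>L\}=\sum_{i=L+1}^\infty r_i$ converges to $0$ as $L\to\infty$, which establishes \eqref{SP4.17}.

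For \eqref{SP4.18}, I would exploit the linear representation \eqref{SP4.2} of Lemma \ref{extended representation} together with its unconditional counterpart
\begin{equation*}
\mathsf{E}\nu_L^{(2)}=\frac{\mathsf{E}\varsigma_1}{1-\rho_2}-\frac{1-\rho_1}{1-\rho_2}\mathsf{E}\nu_L^{(1)},
\end{equation*}
obtained by the same argument as \eqref{SP4.2} applied to the full busy period in place of its conditioned version. Subtracting \eqref{SP4.2} from this identity, the coefficients $\frac{1}{1-\rho_2}$ and $\frac{1-\rho_1}{1-\rho_2}$ being common to both, yields
\begin{equation*}
\mathsf{E}\nu_L^{(2)}-\mathsf{EE}\{\nu_L^{(2)}|\varsigma_1\wedge L\}=\frac{\mathsf{E}\varsigma_1-\mathsf{E}(\varsigma_1\wedge L)}{1-\rho_2}-\frac{1-\rho_1}{1-\rho_2}\left(\mathsf{E}\nu_L^{(1)}-\mathsf{EE}\{\nu_L^{(1)}|\varsigma_1\wedge L\}\right).
\end{equation*}
The second bracket on the right equals $\mathsf{Pr}\{\varsigma_1>L\}=o(1)$ by \eqref{SP4.17}. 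For the first term I would write $\mathsf{E}\varsigma_1-\mathsf{E}(\varsigma_1\wedge L)=\sum_{i=L+1}^\infty(i-L)r_i$, which is the tail of a convergent series because $\mathsf{E}\varsigma_1<\infty$, a standing assumption guaranteed by the finiteness of $\rho_2=\lambda\mathsf{E}\varsigma/\mu_2$. Hence both terms are $o(1)$ and \eqref{SP4.18} follows.

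The main obstacle I anticipate is justifying the unconditional linear representation for $\mathsf{E}\nu_L^{(2)}$ rigorously. Unlike the conditioned quantity, the full busy period started by $\varsigma_1$ customers involves, when $\varsigma_1>L+1$, the overflow 1-busy periods $\widetilde T_{0,i}$ appearing in \eqref{MXG1.6}, all of whose customers are served according to $B_2(x)$. One must check that the contribution of these overflow periods to $\mathsf{E}\nu_L^{(2)}$ is also $o(1)$; this again reduces to the finiteness of $\mathsf{E}\varsigma_1$ together with $\rho_2<1$, which bounds the expected number of customers served in each such period. An alternative, and perhaps cleaner, route is to bypass the unconditional representation entirely and derive an exact formula for $\mathsf{E}\nu_L^{(2)}-\mathsf{EE}\{\nu_L^{(2)}|\varsigma_1\wedge L\}$ by the same telescoping computation used in the proof of Lemma \ref{estimate}, and then bound the resulting tail expression directly.
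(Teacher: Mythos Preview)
Your proof is correct and follows essentially the same route as the paper: \eqref{SP4.17} is immediate from Lemma \ref{estimate}, and \eqref{SP4.18} follows by subtracting the linear representation \eqref{SP4.2} from its unconditional analogue and invoking \eqref{SP4.17} together with $\mathsf{E}\varsigma-\mathsf{E}(\varsigma\wedge L)=o(1)$.

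The one point worth noting is that the obstacle you flag is not actually present. The paper obtains the unconditional representation $\mathsf{E}\nu_L^{(2)}=\frac{\mathsf{E}\varsigma}{1-\rho_2}-\frac{1-\rho_1}{1-\rho_2}\mathsf{E}\nu_L^{(1)}$ (relation \eqref{SP4.23}) not by mimicking the decomposition behind \eqref{SP4.2}, but directly from the global Wald-type identities \eqref{MXG1.0} and \eqref{MG1.2}--\eqref{MG1.5}, which hold for the full busy period regardless of how it decomposes into $1$-busy periods. This route never touches the overflow terms $\widetilde T_{0,i}$ at all, so the contribution you worry about is automatically absorbed. Your alternative suggestion of a telescoping computation would also work but is unnecessary.
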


\begin{proof}
Asymptotic relation \eqref{SP4.17} follows immediately from
\eqref{SP4.4}. In order to show \eqref{SP4.18} let us first derive
the linear representation of $\mathsf{E}\nu_L^{(2)}$ via
$\mathsf{E}\nu_L^{(1)}$. From relation \eqref{MXG1.0} and equations
\eqref{MG1.2}-\eqref{MG1.5} in Section \ref{MG1}, which also hold
true in the case of the present queueing system with batch arrivals,
we obtain:
\begin{equation}\label{SP4.23}
\mathsf{E}\nu_L^{(2)}=\frac{\mathsf{E}\varsigma}{1-\rho_2}-\frac{1-\rho_1}{1-\rho_2}\mathsf{E}\nu_L^{(1)}.
\end{equation}
As well, for $\mathsf{EE}\{\nu_L^{(2)}|\varsigma_1\wedge L\}$ from
Lemma \ref{extended representation} we have representation
\eqref{SP4.2}. Hence, comparing the terms of \eqref{SP4.23} and
\eqref{SP4.2} and taking into account \eqref{SP4.17} we easily
arrive at asymptotic relation \eqref{SP4.18}. Lemma \ref{Asymp
estimate} is proved.
\end{proof}

The following lemma presents the exact formulae for the stationary
probabilities $p_1$ and $p_2$ via the term $\mathsf{E}\nu_L^{(1)}$.

\begin{lem}\label{Prel asymp}
We have:
\begin{equation}\label{SP4.19}
p_1=\frac{(1-\rho_2)\mathsf{E}\varsigma}{\mathsf{E}\varsigma+(\rho_1-\rho_2)\mathsf{E}\nu_L^{(1)}},
\end{equation}
and
\begin{equation}\label{SP4.20}
p_2=\frac{\rho_2\mathsf{E}\varsigma+\rho_2(\rho_1-1)\mathsf{E}\nu_L^{(1)}}
{\mathsf{E}\varsigma+(\rho_1-\rho_2)\mathsf{E}\nu_L^{(1)}}.
\end{equation}
\end{lem}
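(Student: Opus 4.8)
The plan is to treat the system as a regenerative process whose regeneration epochs are the instants at which a busy period begins, and to apply the renewal--reward theorem so that each stationary probability becomes the ratio of an expected per-cycle reward to the expected cycle length. A cycle consists of a busy period followed by an idle period. Since the batch-arrival epochs form a Poisson process of rate $\lambda$, the idle period is exponential with mean $1/\lambda$, so the expected cycle length is $\frac{1}{\lambda}+\mathsf{E}T_L$. The empty-system state occupies exactly the idle period, whence $p_1=\frac{1/\lambda}{1/\lambda+\mathsf{E}T_L}$, while the state $\{L_t>L^{\mathrm{upper}}\}$ coincides with the time the server works under $B_2(x)$, giving $p_2=\frac{\mathsf{E}T_L^{(2)}}{1/\lambda+\mathsf{E}T_L}$. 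Everything then reduces to expressing $\mathsf{E}T_L$ and $\mathsf{E}T_L^{(2)}$ through $\mathsf{E}\nu_L^{(1)}$.

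Next I would convert the time quantities into counting quantities by Wald's equations \eqref{MG1.4}--\eqref{MG1.5}, which the text has already declared valid in the batch setting: $\mathsf{E}T_L^{(1)}=\mu_1^{-1}\mathsf{E}\nu_L^{(1)}$ and $\mathsf{E}T_L^{(2)}=\mu_2^{-1}\mathsf{E}\nu_L^{(2)}$, combined with $\mathsf{E}T_L=\mathsf{E}T_L^{(1)}+\mathsf{E}T_L^{(2)}$ from \eqref{MG1.2}. The linear representation \eqref{SP4.23} lets me eliminate $\mathsf{E}\nu_L^{(2)}$ in favour of $\mathsf{E}\nu_L^{(1)}$, and substituting the identities $\mu_k^{-1}=\rho_k/(\lambda\mathsf{E}\varsigma)$ turns both $\mathsf{E}T_L^{(1)}$ and $\mathsf{E}T_L^{(2)}$ into affine functions of $\mathsf{E}\nu_L^{(1)}$ with coefficients depending only on $\rho_1,\rho_2,\lambda,\mathsf{E}\varsigma$. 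In particular $\mathsf{E}T_L^{(2)}=\frac{\rho_2\mathsf{E}\varsigma+\rho_2(\rho_1-1)\mathsf{E}\nu_L^{(1)}}{\lambda\mathsf{E}\varsigma(1-\rho_2)}$, which is already the intended numerator of \eqref{SP4.20} up to the common factor.

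The decisive step is the algebraic collapse on assembling these pieces. Adding $\mathsf{E}T_L^{(1)}$ and $\mathsf{E}T_L^{(2)}$, the coefficient of $\mathsf{E}\nu_L^{(1)}$ simplifies via $\rho_1-\frac{\rho_2(1-\rho_1)}{1-\rho_2}=\frac{\rho_1-\rho_2}{1-\rho_2}$, so that $\mathsf{E}T_L=\frac{\rho_2}{\lambda(1-\rho_2)}+\frac{\rho_1-\rho_2}{\lambda\mathsf{E}\varsigma(1-\rho_2)}\mathsf{E}\nu_L^{(1)}$. Adding the idle mean, the constants combine as $\frac{1}{\lambda}+\frac{\rho_2}{\lambda(1-\rho_2)}=\frac{1}{\lambda(1-\rho_2)}$, so the cycle length becomes $\frac{\mathsf{E}\varsigma+(\rho_1-\rho_2)\mathsf{E}\nu_L^{(1)}}{\lambda\mathsf{E}\varsigma(1-\rho_2)}$. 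Dividing the idle mean $1/\lambda$ by this yields \eqref{SP4.19}, and dividing $\mathsf{E}T_L^{(2)}$ by the same quantity yields \eqref{SP4.20}; the common factor $\lambda\mathsf{E}\varsigma(1-\rho_2)$ cancels in each ratio.

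I do not anticipate a genuine obstacle, since the structural inputs (the renewal--reward decomposition, Wald's equations, and the representation \eqref{SP4.23}) are all in hand. The only points demanding care are the bookkeeping of the idle period --- specifically justifying that its mean is $1/\lambda$ from the Poisson character of the \emph{batch}-arrival epochs rather than from the individual customer stream --- and the verification that $p_1$ and $p_2$ are ratios over the \emph{same} denominator, which is precisely what makes the closed forms appear. As a consistency check I would set $\mathsf{E}\varsigma=1$ and confirm that \eqref{SP4.19}--\eqref{SP4.20} reduce to the formulae for $p_1$ and $p_2$ recorded in Section \ref{MG1}.
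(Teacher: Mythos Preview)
Your proposal is correct and follows essentially the same route as the paper: renewal--reward decomposition over a busy cycle, Wald's identities \eqref{MG1.4}--\eqref{MG1.5}, and the linear representation \eqref{SP4.23}. The only cosmetic difference is that the paper shortcuts the denominator computation by invoking \eqref{MXG1.0} directly, writing $\mathsf{E}T_L+1/\lambda=\mathsf{E}\nu_L/(\lambda\mathsf{E}\varsigma)$ so that $p_1=\mathsf{E}\varsigma/\mathsf{E}\nu_L$ and $p_2=\rho_2\mathsf{E}\nu_L^{(2)}/\mathsf{E}\nu_L$ before substituting \eqref{SP4.23}, whereas you assemble $\mathsf{E}T_L$ explicitly from $\mathsf{E}T_L^{(1)}+\mathsf{E}T_L^{(2)}$; the algebra is equivalent.
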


\begin{proof} Using renewal arguments (e.g. \cite{Ross 2000}) and relation \eqref{MXG1.0}, we have:
\begin{equation}\label{SP4.21}
p_1=\frac{\dfrac{1}{\lambda}}{\mathsf{E}T_L^{(1)}+\mathsf{E}T_L^{(2)}+\dfrac{1}{\lambda}}
=\frac{\mathsf{E}\varsigma}
{\mathsf{E}\nu_L^{(1)}+\mathsf{E}\nu_L^{(2)}}
\end{equation}
and
\begin{equation}\label{SP4.22}
p_2=\frac{\mathsf{E}T_L^{(2)}}{\mathsf{E}T_L^{(1)}+\mathsf{E}T_L^{(2)}+\dfrac{1}{\lambda}}=
\frac{\rho_2\mathsf{E}\nu_L^{(2)}}{\mathsf{E}\nu_L^{(1)}+\mathsf{E}\nu_L^{(2)}}.
\end{equation}


Now, substituting \eqref{SP4.23} for the right sides of \eqref{SP4.21} and \eqref{SP4.22} we obtain relations \eqref{SP4.19} and \eqref{SP4.20} of this lemma.
\end{proof}

\subsection{Asymptotic theorems for $p_1$ and $p_2$ under `usual assumptions'}\label{Final asymp}
By `usual assumption' we only mean the standard cases as $\rho_1=1$,
$\rho_1<1$ or $\rho_1>1$ for the asymptotic behaviour as
$L\to\infty$. In the following sections the heavy load assumptions
are assumed.

 The main result of Section \ref{Preliminary asymp} is Lemma
\ref{Prel asymp}, where the stationary probabilities $p_1$ and $p_2$
are expressed explicitly via $\mathsf{E}\nu_L^{(1)}$. The aim of
this section is to obtain the analogue of asymptotic Theorem 3.1 of
\cite{Abramov 2007}. To this end, we will derive an asymptotic
representation for $\mathsf{EE}\{\nu_L^{(1)}|\varsigma_1\wedge L\}$
as $L\to\infty$.

Let us first study the asymptotic behavior of
$\mathsf{E}\widetilde\nu_L^{(1)}$ as $L\to\infty$. For this purpose
derive the representation for the generating function
$\sum\limits_{j=0}^\infty\mathsf{E}\widetilde\nu_j^{(1)}z^j$. Using
representation \eqref{SP4.1}, we have:
\begin{equation}\label{SP5.0}
\begin{aligned}
\sum_{j=0}^\infty\mathsf{E}\widetilde\nu_j^{(1)}z^j&=\sum_{j=0}^\infty
z^j\sum_{i=0}^{j}
\mathsf{E}\widetilde{\nu}_{L-i+1}^{(1)}\int_0^\infty
\frac{1}{i!}\frac{\mathrm{d}^{i}f_x(u)}{\mathrm{d}u^i}\Big|_{u=0}
\mathrm{d}B_1(x)\\
&=\frac{U(z)}{U(z)-z},
\end{aligned}
\end{equation}
where
\begin{equation}\label{SP5.1}
\begin{aligned}
U(z)&=\int_0^{\infty}\exp\left\{-\lambda x\left(1-\sum_{i=1}^{\infty}r_iz^i\right)\right\}\mbox{d}B_1(x)\\
&=\widehat{B}_1(\lambda-\lambda\widehat{R}(z)).
\end{aligned}
\end{equation}
(By $\widehat{B}_1(s)$ we denote the Laplace-Stieltjes transform of
$B_1(x)$ ($\mathfrak{R}(s)\geq0$), and
$\widehat{R}(z)=\sum\limits_{i=1}^\infty r_iz^i$, $|z|\leq1$.)
Hence, from \eqref{SP5.1} and \eqref{SP5.0} we obtain:
\begin{equation}\label{SP5.2}
\sum_{j=0}^\infty\mathsf{E}\widetilde\nu_j^{(1)}z^j=\frac{\widehat{B}_1(\lambda-\lambda\widehat{R}(z))}
{\widehat{B}_1(\lambda-\lambda\widehat{R}(z))-z}.
\end{equation}

Notice, that the right-hand side of \eqref{SP5.0} and hence that of
\eqref{SP5.2} has the same form as \eqref{SP4+1}. Therefore,
according to Lemmas \ref{lem.T} and \ref{lem.P}, the asymptotic
behaviour of $\mathsf{E}\widetilde{\nu}_L^{(1)}$, as $L\to\infty$,
is given by the following statements.
\begin{lem}\label{Asymp behav 1}
If $\rho_1<1$, then
\begin{equation}\label{SP5.3}
\lim_{L\to\infty}\mathsf{E}\widetilde\nu_L^{(1)}=\frac{1}{1-\rho_1}.
\end{equation}
If $\rho_1=1$, and additionally $\rho_{1,2}=\int_0^\infty (\lambda
x)^2\mbox{d}B_1(x)<\infty$ and $\mathsf{E}\varsigma^2<\infty$, then
\begin{equation}\label{SP5.4}
 \mathsf{E}\widetilde\nu_L^{(1)}-\mathsf{E}\widetilde\nu_{L-1}^{(1)}
=\frac{2\mathsf{E}\varsigma}{\rho_{1,2}(\mathsf{E}\varsigma)^{3}+\mathsf{E}\varsigma^{2}-\mathsf{E}\varsigma
}+o(1).
\end{equation}
If $\rho_1>1$, then
\begin{equation}\label{SP5.5}
\lim_{L\to\infty}\left[\mathsf{E}\widetilde\nu_L^{(1)}-\frac{1}{\varphi^L[1+\lambda
\widehat{B}_1^\prime(\lambda-\lambda\widehat{R}(\varphi))\widehat{R}^\prime(\varphi)]}\right]=\frac{1}{1-\rho_1},
\end{equation}
where $\varphi$ is the root of the functional equation
$z=\widehat{B}_1(\lambda-\lambda\widehat{R}(z))$ that is least in
absolute value.
\end{lem}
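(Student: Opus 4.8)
The plan is to capitalize on the remark already made just before the statement: the generating function in \eqref{SP5.2} has \emph{exactly} the form \eqref{SP4+1}, with $Q_0=\mathsf{E}\widetilde\nu_0^{(1)}=1$ and $F(z)=U(z)=\widehat{B}_1(\lambda-\lambda\widehat{R}(z))$. Thus the sequence $\mathsf{E}\widetilde\nu_L^{(1)}$ is precisely the sequence $Q_L$ governed by Lemmas \ref{lem.T} and \ref{lem.P}, and the whole lemma will follow once I check that $F$ meets their hypotheses and then read off $\gamma_1$ (and, in one case, $\gamma_2$). So the first step is to verify that the Taylor coefficients $f_j=[z^j]F(z)$ satisfy $f_0>0$, $f_j\geq0$ and $\sum_j f_j=1$. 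This holds because for each fixed $x$ the factor $\exp\{-\lambda x(1-\widehat{R}(z))\}$ is itself an honest probability generating function — that of the compound Poisson amount of water arriving in a time interval of length $x$ — so its coefficients are nonnegative and sum to $1$; integrating against $\mathrm{d}B_1(x)$ preserves both properties, while $f_0=\widehat{B}_1(\lambda)>0$.

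The decisive step is to compute $\gamma_1=F'(1)$ and identify it with $\rho_1$. By the chain rule $F'(z)=-\lambda\widehat{B}_1'(\lambda-\lambda\widehat{R}(z))\widehat{R}'(z)$, and evaluating at $z=1$, where $\widehat{R}(1)=1$, $\widehat{R}'(1)=\mathsf{E}\varsigma$ and $\widehat{B}_1'(0)=-1/\mu_1$, gives $\gamma_1=\lambda\mathsf{E}\varsigma/\mu_1=\rho_1$. This single identification collapses the three regimes of the present lemma onto the three regimes of Lemma \ref{lem.T}. For $\rho_1<1$, relation \eqref{SP5.3} is merely \eqref{T.1} with $Q_0=1$ and $\gamma_1=\rho_1$. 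For $\rho_1>1$, relation \eqref{SP5.5} is \eqref{T.3}: the least-in-modulus root $\delta$ of $z=F(z)$ is by definition $\varphi$, and $1-F'(\varphi)=1+\lambda\widehat{B}_1'(\lambda-\lambda\widehat{R}(\varphi))\widehat{R}'(\varphi)$ is exactly the bracketed factor appearing in the denominator there.

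The borderline case $\rho_1=1$ carries the only genuine computation and is where I would invoke Postnikov's Lemma \ref{lem.P}, which in addition to $\gamma_1=1$ requires $\gamma_2<\infty$ and $f_0+f_1<1$. The moment hypotheses $\rho_{1,2}<\infty$ and $\mathsf{E}\varsigma^2<\infty$ secure $\gamma_2<\infty$, and I would dispose of $f_0+f_1<1$ by the short estimate $f_0+f_1=\int_0^\infty(1+\lambda r_1 x)\mathrm{e}^{-\lambda x}\,\mathrm{d}B_1(x)$, where $(1+\lambda r_1 x)\mathrm{e}^{-\lambda x}\leq(1+\lambda x)\mathrm{e}^{-\lambda x}\leq1$ with strict inequality for $x>0$, so the integral is strictly below $1$ as soon as $B_1$ charges $(0,\infty)$. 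A second differentiation yields $\gamma_2=F''(1)=\lambda^2\widehat{B}_1''(0)(\mathsf{E}\varsigma)^2-\lambda\widehat{B}_1'(0)\widehat{R}''(1)$; substituting $\widehat{B}_1''(0)=\rho_{1,2}/\lambda^2$, $\widehat{R}''(1)=\mathsf{E}\varsigma^2-\mathsf{E}\varsigma$ and using $\lambda/\mu_1=1/\mathsf{E}\varsigma$ (valid since $\rho_1=1$) collapses this to $\gamma_2=\rho_{1,2}(\mathsf{E}\varsigma)^2+(\mathsf{E}\varsigma^2-\mathsf{E}\varsigma)/\mathsf{E}\varsigma$. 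Then \eqref{P.1} with $Q_0=1$ reads $\mathsf{E}\widetilde\nu_L^{(1)}-\mathsf{E}\widetilde\nu_{L-1}^{(1)}=2/\gamma_2+o(1)$, and clearing the inner fraction by multiplying numerator and denominator by $\mathsf{E}\varsigma$ reproduces \eqref{SP5.4} verbatim. The main obstacle is therefore not conceptual but careful bookkeeping: propagating the chain rule through the composition $\widehat{B}_1\circ(\lambda-\lambda\widehat{R})$ to second order and confirming the two side conditions of Postnikov's theorem in the present compound-Poisson setting.
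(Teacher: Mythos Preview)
Your proposal is correct and follows essentially the same route as the paper: identify $F(z)=\widehat{B}_1(\lambda-\lambda\widehat{R}(z))$, compute $\gamma_1=\rho_1$, and read off the three cases from Lemmas \ref{lem.T} and \ref{lem.P}, with the same $\gamma_2$ in the borderline case. The one point of divergence is the verification of the Tauberian side condition $f_0+f_1<1$: the paper argues via the maximum modulus principle that $\int_0^\infty(1+\lambda x)\mathrm{e}^{-\lambda x}\,\mathrm{d}B_1(x)=1$ would force $\widehat{B}_1$ to be linear, hence degenerate, whereas you use the direct pointwise bound $(1+\lambda x)\mathrm{e}^{-\lambda x}<1$ for $x>0$; your argument is shorter and entirely elementary, while the paper's is more elaborate than necessary here.
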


\begin{proof} Asymptotic relations \eqref{SP5.3} and \eqref{SP5.5}
follow by application of those \eqref{T.1} and, respectively,
\eqref{T.3} of Lemma \ref{lem.T}.

In order to prove asymptotic relation \eqref{SP5.4} we should apply
the Tauberian theorem of Postnikov (Lemma \ref{lem.P}). Then
asymptotic relation \eqref{SP5.4} is to follow from \eqref{P.1} if
we prove that the Tauberian condition $f_0+f_1<1$ of Lemma
\ref{lem.P} is satisfied. In the case of the present model, we must
prove that for some $\lambda_0>0$ the equality
\begin{equation}\label{SP5.5+1}
\int_0^\infty\mathrm{e}^{-\lambda_0x}(1+\lambda_0r_1x)\mathrm{d}B_1(x)=1
\end{equation}
is not the case. Without loss of generality $r_1$ in \eqref{SP5.5+1}
can be set to be equal to 1, since
\begin{equation*}\label{SP5.5+0}
\int_0^\infty\mathrm{e}^{-\lambda_0x}(1+\lambda_0r_1x)\mathrm{d}B_1(x)\leq
\int_0^\infty\mathrm{e}^{-\lambda_0x}(1+\lambda_0x)\mathrm{d}B_1(x).
\end{equation*}
Thus, we should prove the inequality
\begin{equation*}\label{SP5.5+2}
\int_0^\infty\mathrm{e}^{-\lambda x}(1+\lambda x)\mathrm{d}B_1(x)<1.
\end{equation*}
Indeed, $\int_0^\infty\mathrm{e}^{-\lambda x}(1+\lambda
x)\mathrm{d}B_1(x)$ is an analytic function in $\lambda$, and hence,
according to the theorem on the maximum module of an analytic
function, equality \eqref{SP5.5+1} where $r_1=1$ must hold for
\textit{all} $\lambda_0\geq0$. This means that \eqref{SP5.5+1} is
valid if and only if
$$
\int_0^\infty\mathrm{e}^{-\lambda_0x}\frac{(\lambda_0
x)^i}{i!}\mathrm{d}B_1(x)=0
$$
for all $i\geq2$ and $\lambda_0\geq0$. Since
$\int_0^\infty\mathrm{e}^{-\lambda x}{(-x)^i}\mathrm{d}B_1(x)$ is
the $i$th derivative of the Laplace-Stieltjes transform $\widehat
B_1(\lambda)$, then in this case the Laplace-Stieltjes transform
$\widehat B_1(\lambda)$ must be a linear function in $\lambda$, i.e.
$\widehat B_1(\lambda)=d_0+d_1\lambda$, $d_0$ and $d_1$ are some
constants. However, since $|\widehat B_1(\lambda)|\leq1$, we have
$d_0=1$ and $d_1=0$. This is a trivial case where $B_1(x)$ is
concentrated in point 0, and therefore it is not a probability
distribution function having a positive mean.
 Thus \eqref{SP5.5+1} is not the case, and the
aforementioned Tauberian conditions are satisfied.

Now, the final part of the proof of \eqref{SP5.4} reduces to an
elementary algebraic calculations:
$$
\gamma_2:=\frac{\mathrm{d}^{2}}{\mathrm{d}z^{2}}\widehat{B}_1(\lambda-\lambda\widehat{R}(z))|_{z=1}
=\frac{\mathsf{E}\varsigma^{2}}{\mathsf{E}\varsigma}-1+\rho_{1,2}(\mathsf{E}\varsigma)^{2}.
$$
The lemma is proved.
\end{proof}

With the aid of Lemma \ref{Asymp behav 1} one can easily obtain the
statements on asymptotic behavior of $\mathsf{E}\nu_L^{(1)}$,
$\mathsf{EE}\{\nu_L^{(1)}|\varsigma \wedge L\}$ and, consequently,
$p_1$ and $p_2$. The theorem below characterizes asymptotic behavior
of the probabilities $p_1$ and $p_2$ as $L\to\infty$.

\begin{thm}\label{M asymp}
If $\rho_1<1$, then
\begin{eqnarray}
\lim_{L\to\infty} p_1(L)&=&1-\rho_1,\label{SP5.6}\\
\lim_{L\to\infty} p_2(L)&=&0.\label{SP5.7}
\end{eqnarray}
If $\rho_1=1$, and additionally $\rho_{1,2}=\int_0^\infty (\lambda
x)^2\mbox{d}B_1(x)<\infty$ and $\mathsf{E}\varsigma^{2}<\infty$,
then
\begin{eqnarray}
\lim_{L\to\infty}
Lp_1(L)&=&\frac{\rho_{1,2}(\mathsf{E}\varsigma)^{3}+\mathsf{E}\varsigma^{2}-\mathsf{E}\varsigma
}{2\mathsf{E}\varsigma},\label{SP5.8}\\
\lim_{L\to\infty}
Lp_2(L)&=&\frac{\rho_2}{1-\rho_2}\frac{\rho_{1,2}(\mathsf{E}\varsigma)^{3}+\mathsf{E}\varsigma^{2}-\mathsf{E}\varsigma
}{2\mathsf{E}\varsigma}.\label{SP5.9}
\end{eqnarray}
If $\rho_1>1$, then
\begin{eqnarray}
\lim_{L\to\infty}\frac{p_1(L)}{\varphi^L}
&=&\frac{(1-\rho_2)[1+\lambda\widehat{B}_1^\prime(\lambda-\lambda\widehat{R}(\varphi))\widehat{R}^\prime(\varphi)]
(1-\varphi)\mathrm{E}\varsigma}
{(\rho_1-\rho_2)[1-\widehat{R}(\varphi)]},\label{SP5.10}\\
\lim_{L\to\infty}p_2(L)&=&\frac{\rho_2(\rho_1-1)}{\rho_1-\rho_2},\label{SP5.11}
\end{eqnarray}
where $\varphi$ is defined in the formulation of Lemma \ref{Asymp behav 1}.
\end{thm}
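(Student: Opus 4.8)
The plan is to reduce the whole statement to the asymptotics of the single quantity $\mathsf{E}\nu_L^{(1)}$, which is the only unknown in the exact formulae \eqref{SP4.19} and \eqref{SP4.20} of Lemma \ref{Prel asymp}. Once the behaviour of $\mathsf{E}\nu_L^{(1)}$ is known in each regime, the assertions for $p_1$ and $p_2$ follow by substitution and simplification. For instance, in the case $\rho_1<1$ one uses $\mathsf{E}\nu_L^{(1)}\to\mathsf{E}\varsigma/(1-\rho_1)$ so that the denominator $\mathsf{E}\varsigma+(\rho_1-\rho_2)\mathsf{E}\nu_L^{(1)}$ collapses to $\mathsf{E}\varsigma(1-\rho_2)/(1-\rho_1)$, giving \eqref{SP5.6}, while the numerator of $p_2$ reduces to $0$ in the limit, giving \eqref{SP5.7}.

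To obtain the asymptotics of $\mathsf{E}\nu_L^{(1)}$ I would first appeal to Corollary \ref{Asymp estimate}: since $\mathsf{E}\nu_L^{(1)}-\mathsf{EE}\{\nu_L^{(1)}|\varsigma_1\wedge L\}=o(1)$, it is enough to analyse
\begin{equation*}
\mathsf{EE}\{\nu_L^{(1)}|\varsigma_1\wedge L\}=\sum_{i=1}^L\mathsf{Pr}\{\varsigma_1\wedge L=i\}\sum_{j=1}^i\mathsf{E}\widetilde{\nu}_{L-j+1}^{(1)}
\end{equation*}
from Lemma \ref{extended representation}, inserting the three regimes of $\mathsf{E}\widetilde\nu_L^{(1)}$ from Lemma \ref{Asymp behav 1}. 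For $\rho_1<1$ every $\mathsf{E}\widetilde\nu_{L-j+1}^{(1)}\to(1-\rho_1)^{-1}$, the inner sum behaves like $i/(1-\rho_1)$, and the outer sum tends to $\mathsf{E}\varsigma/(1-\rho_1)$. For $\rho_1=1$ I would sum the increments in \eqref{SP5.4} to get $\mathsf{E}\widetilde\nu_m^{(1)}=dm+o(m)$ with $d=2\mathsf{E}\varsigma/[\rho_{1,2}(\mathsf{E}\varsigma)^3+\mathsf{E}\varsigma^2-\mathsf{E}\varsigma]$; then the inner sum is, to leading order, $d\sum_{j=1}^i(L-j+1)$, and the outer average is $dL\mathsf{E}\varsigma$, so $\mathsf{E}\nu_L^{(1)}\sim dL\mathsf{E}\varsigma$. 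For $\rho_1>1$, \eqref{SP5.5} gives $\mathsf{E}\widetilde\nu_{L-j+1}^{(1)}\sim K\varphi^{-L}\varphi^{\,j-1}$ with $K=[1+\lambda\widehat{B}_1^\prime(\lambda-\lambda\widehat{R}(\varphi))\widehat{R}^\prime(\varphi)]^{-1}$, and the geometric inner sum, after $\mathsf{Pr}\{\varsigma_1\wedge L=i\}\to r_i$, produces the factor $(1-\widehat{R}(\varphi))/(1-\varphi)$, whence $\mathsf{E}\nu_L^{(1)}\sim K\varphi^{-L}(1-\widehat{R}(\varphi))/(1-\varphi)$.

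The substitutions into \eqref{SP4.19}--\eqref{SP4.20} are then bookkeeping: in each regime one retains only the dominant term of the denominator, of order $1$, $L$, and $\varphi^{-L}$ respectively. In the case $\rho_1=1$ this yields $Lp_1\to 1/d$, which is exactly \eqref{SP5.8}, and the analogous computation for $p_2$ gives \eqref{SP5.9}. In the case $\rho_1>1$ the factor $K$ appearing in $\mathsf{E}\nu_L^{(1)}$ cancels against $1/K=1+\lambda\widehat{B}_1^\prime(\lambda-\lambda\widehat{R}(\varphi))\widehat{R}^\prime(\varphi)$, reproducing the bracket in \eqref{SP5.10}, while the ratio of the two leading terms in \eqref{SP4.20} gives the constant $\rho_2(\rho_1-1)/(\rho_1-\rho_2)$ of \eqref{SP5.11}.

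The main obstacle is justifying the interchange of $L\to\infty$ with the two summations, that is, replacing $\mathsf{E}\widetilde\nu_{L-j+1}^{(1)}$ by its asymptotic form inside the double sum and passing $\mathsf{Pr}\{\varsigma_1\wedge L=i\}\to r_i$. The delicate regime is $\rho_1=1$: the exact inner sum $d\sum_{j=1}^i(L-j+1)=\tfrac{d}{2}\,i(2L-i+1)$ carries a sub-leading contribution whose outer average involves $\mathsf{E}[(\varsigma_1\wedge L)^2]$, and it is precisely the hypothesis $\mathsf{E}\varsigma^2<\infty$ that keeps this term $O(1)$ and hence negligible against the leading $dL\mathsf{E}\varsigma$. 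To legitimise the interchange I would use the linear bound $\mathsf{E}\widetilde\nu_m^{(1)}\le d'm$ (a consequence of \eqref{SP5.4}), which dominates the inner sum by $d'iL$ and supplies the summable majorant $d'\,i\,r_i$ for $\mathsf{EE}\{\nu_L^{(1)}|\varsigma_1\wedge L\}/L$, the truncation term $i=L$ being harmless since $L\,\mathsf{Pr}\{\varsigma_1\ge L\}\to0$. In the regime $\rho_1>1$ the geometric decay $\varphi^{\,j-1}$ furnishes the domination automatically, and for $\rho_1<1$ the boundedness of $\mathsf{E}\widetilde\nu_m^{(1)}$ makes the passage immediate.
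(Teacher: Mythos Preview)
Your proposal is correct and follows essentially the same route as the paper: derive the asymptotics of $\mathsf{EE}\{\nu_L^{(1)}\mid\varsigma_1\wedge L\}$ from Lemma~\ref{Asymp behav 1} in each of the three regimes, transfer them to $\mathsf{E}\nu_L^{(1)}$ via Corollary~\ref{Asymp estimate}, and then substitute into the exact formulae \eqref{SP4.19}--\eqref{SP4.20} of Lemma~\ref{Prel asymp}. The paper carries out exactly these steps (obtaining \eqref{SP5.12}--\eqref{SP5.14}) but does so more tersely, without spelling out the dominated-convergence justification for interchanging the limit with the double sum; your explicit treatment of that point, in particular the use of $\mathsf{E}\varsigma^2<\infty$ to control the sub-leading term in the $\rho_1=1$ case, is a welcome addition rather than a deviation.
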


\begin{proof} Let us first find asymptotic representation for $\mathsf{EE}\{\nu_L^{(1)}|\varsigma_1\wedge L\}$ as $L\to\infty$. According to Lemma \ref{Asymp behav 1} and explicit representation \eqref{SP4.8} we obtain as follows.

If $\rho_1<1$, then
\begin{equation}\label{SP5.12}
\begin{aligned}
\lim_{L\to\infty}\mathsf{EE}\{\nu_L^{(1)}|\varsigma_1\wedge L\}&=
\frac{1}{1-\rho_1}\lim_{L\to\infty}\sum_{i=1}^{L}i\mathsf{Pr}\{\varsigma_1\wedge L=i\}\\
&=\frac{\mathsf{E}\varsigma}{1-\rho_1}.
\end{aligned}
\end{equation}
If $\rho_1=1$, $\rho_{1,2}<\infty$ and
$\mathsf{E}\varsigma^{2}<\infty$, then
\begin{equation}\label{SP5.13}
\begin{aligned}
\lim_{L\to\infty}\frac{\mathsf{EE}\{\nu_L^{(1)}|\varsigma_1\wedge
L\}}{L}&=\frac{2\mathsf{E}\varsigma}{\rho_{1,2}(\mathsf{E}\varsigma)^{3}+\mathsf{E}\varsigma^{2}
-\mathsf{E}\varsigma}\lim_{L\to\infty}
\sum_{i=1}^{L}i\mathsf{Pr}\{\varsigma_1\wedge L=i\}\\
&=\frac{2(\mathsf{E}\varsigma)^{2}}{\rho_{1,2}(\mathsf{E}\varsigma)^{3}+\mathsf{E}\varsigma^{2}-
\mathsf{E}\varsigma}.
\end{aligned}
\end{equation}

If $\rho_1>1$, then
\begin{equation}\label{SP5.14}
\begin{aligned}
\lim_{L\to\infty}\frac{\mathsf{EE}\{\nu_L^{(1)}|\varsigma_1\wedge
L\}}{\varphi^L}&=
\frac{1}{1+\lambda\widehat{B}_1^\prime(\lambda-\lambda\widehat{R}(\varphi))\widehat{R}^\prime(\varphi)}\\
&\ \ \ \times
\lim_{L\to\infty}\sum_{i=1}^L\mathsf{Pr}\{\varsigma_1\wedge L=i\}\sum_{j=0}^{i-1}\varphi^j\\
&=\frac{1}{[1+\lambda\widehat{B}_1^\prime(\lambda-\lambda\widehat{R}(\varphi))\widehat{R}^\prime(\varphi)](1-\varphi)}\\
&\ \ \ \times
\lim_{L\to\infty}\sum_{i=1}^L\mathsf{Pr}\{\varsigma_1\wedge L=i\}(1-\varphi^i)\\
&=\frac{1-\widehat{R}(\varphi)}{[1+\lambda\widehat{B}_1^\prime(\lambda-\lambda\widehat{R}(\varphi))
\widehat{R}^\prime(\varphi)](1-\varphi)}.
\end{aligned}
\end{equation}
Therefore, taking into account these limiting relations
\eqref{SP5.12}, \eqref{SP5.13} and \eqref{SP5.14} by virtue of
\eqref{SP4.17} (Corollary \ref{Asymp estimate}) and explicit
representations \eqref{SP4.19} and \eqref{SP4.20} (Lemma \ref{Prel
asymp}) for $p_1$ and $p_2$, we finally arrive at the statements of
the theorem. The theorem is proved.
\end{proof}

\subsection{Asymptotic theorems for $p_1$ and $p_2$ under special heavy load conditions.}\label{Final asymp 2}

In this section we establish asymptotic theorems for $p_1$ and $p_2$
under heavy load assumptions where (i) $\rho_1=1+\delta$ or (ii)
$\rho_1=1-\delta$, and $\delta$ is a vanishing positive parameter as
$L\to\infty$. The theorems presented in this section are analogues
of the theorems of \cite{Abramov 2007} given in Section 4 of that
paper. The conditions are special, because these heavy load
conditions include the change of the parameter $\rho_1$ as $L$
increases to infinity and $\delta$ vanishes, but the other load
parameter $\rho_2$ remains unchanged.

\smallskip
In case (i) we have the following two theorems.

\begin{thm}\label{i1} Assume that $\rho_1=1+\delta$, $\delta>0$ and
that $L\delta\to C>0$ as $\delta\to0$ and $L\to\infty$. Assume that
$\rho_{1,3}(L)$ is a bounded sequence, assume that
$\mathsf{E}\varsigma^{3}<\infty$ and that the limit
$\lim\limits_{L\to\infty}\rho_{1,2}(L)=\widetilde\rho_{1,2}$ exists.
Then,
\begin{eqnarray}
p_1&=&\frac{\delta}{\exp\left(\dfrac{2C\mathsf{E}\varsigma}
{\widetilde{\rho}_{1,2}(\mathsf{E}\varsigma)^{3}+\mathsf{E}\varsigma^{2}-\mathsf{E}\varsigma}\right)-1}[1+o(1)],\label{SP6.1}\\
p_2&=&\frac{\delta\rho_2\exp\left(\dfrac{2C\mathsf{E}\varsigma}
{\widetilde{\rho}_{1,2}(\mathsf{E}\varsigma)^{3}+\mathsf{E}\varsigma^{2}-\mathsf{E}\varsigma}\right)}
{(1-\rho_2)\left[\exp\left(\dfrac{2C\mathsf{E}\varsigma}
{\widetilde{\rho}_{1,2}(\mathsf{E}\varsigma)^{3}+\mathsf{E}\varsigma^{2}-\mathsf{E}\varsigma}\right)-1\right]}
[1+o(1)].\label{SP6.2}
\end{eqnarray}
\end{thm}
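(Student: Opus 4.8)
The plan is to reduce everything to the heavy-traffic asymptotics of $\mathsf{E}\widetilde\nu_L^{(1)}$ and then feed these into the exact formulae of Lemma \ref{Prel asymp}. From \eqref{SP4.19} and \eqref{SP4.20} the probabilities $p_1,p_2$ are explicit rational functions of $\mathsf{E}\nu_L^{(1)}$, and by Corollary \ref{Asymp estimate} one may replace $\mathsf{E}\nu_L^{(1)}$ by $\mathsf{EE}\{\nu_L^{(1)}\mid\varsigma_1\wedge L\}$ up to an additive $o(1)$; since the quantity to be computed diverges like $\delta^{-1}$, this $o(1)$ becomes harmless after multiplication by $\delta$. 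Using representation \eqref{SP4.8} it then suffices to control $\mathsf{E}\widetilde\nu_{L-j+1}^{(1)}$ over the relevant range of $j$. The target estimate I would establish is
\begin{equation*}
\delta\,\mathsf{E}\widetilde\nu_L^{(1)}\longrightarrow \exp\left(\frac{2C}{\gamma_2}\right)-1,\qquad \gamma_2=\widetilde\rho_{1,2}(\mathsf{E}\varsigma)^2+\frac{\mathsf{E}\varsigma^2}{\mathsf{E}\varsigma}-1,
\end{equation*}
with the same limit holding, for each fixed $j$, for $\mathsf{E}\widetilde\nu_{L-j+1}^{(1)}$; summing against $\mathsf{Pr}\{\varsigma_1\wedge L=i\}$ then produces a factor $\mathsf{E}\varsigma$ and yields $\delta\,\mathsf{E}\nu_L^{(1)}\to\mathsf{E}\varsigma(\mathrm{e}^{2C/\gamma_2}-1)$.

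The core is the heavy-traffic analysis of $\mathsf{E}\widetilde\nu_L^{(1)}$ built on the generating function \eqref{SP5.2}, that is $\sum_j\mathsf{E}\widetilde\nu_j^{(1)}z^j=g(z)/(g(z)-z)$ with $g(z)=\widehat B_1(\lambda-\lambda\widehat R(z))$. This has the form \eqref{SP4+1}, so I would invoke the $\rho_1>1$ branch of Lemma \ref{Asymp behav 1}, writing
\begin{equation*}
\mathsf{E}\widetilde\nu_L^{(1)}=\frac{1}{\varphi^L\,[\,1-g'(\varphi)\,]}+\frac{1}{1-\rho_1}+(\text{remainder}),
\end{equation*}
where $\varphi$ is the least-modulus root of $z=g(z)$ and I have used the identity $1+\lambda\widehat B_1'(\lambda-\lambda\widehat R(\varphi))\widehat R'(\varphi)=1-g'(\varphi)$. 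Since $g(1)=1$, $g'(1)=\rho_1=1+\delta$ and $g''(1)\to\gamma_2$, a Taylor expansion of $z=g(z)$ about $z=1$ gives $\varphi-1=-2\delta/\gamma_2+o(\delta)$, whence $L\ln\varphi\to-2C/\gamma_2$ (using $L\delta\to C$) and $\varphi^L\to\mathrm{e}^{-2C/\gamma_2}$. Expanding $g'(\varphi)=\rho_1+\gamma_2(\varphi-1)+O(\delta^2)=1-\delta+O(\delta^2)$ shows $1-g'(\varphi)=\delta(1+o(1))$. The two leading terms are therefore $\mathrm{e}^{2C/\gamma_2}/\delta$ and $-1/\delta$; each diverges, but their \emph{difference} is finite, and multiplying by $\delta$ gives exactly $\mathrm{e}^{2C/\gamma_2}-1$.

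Substituting $\delta\,\mathsf{E}\nu_L^{(1)}\to\mathsf{E}\varsigma(\mathrm{e}^{2C/\gamma_2}-1)$ into \eqref{SP4.19}, the denominator $\mathsf{E}\varsigma+(\rho_1-\rho_2)\mathsf{E}\nu_L^{(1)}$ is dominated by $(1-\rho_2)\mathsf{E}\nu_L^{(1)}\sim(1-\rho_2)\mathsf{E}\varsigma(\mathrm{e}^{2C/\gamma_2}-1)/\delta$, which immediately produces \eqref{SP6.1}; weighing the numerator $\rho_2\mathsf{E}\varsigma+\rho_2\delta\mathsf{E}\nu_L^{(1)}\to\rho_2\mathsf{E}\varsigma\,\mathrm{e}^{2C/\gamma_2}$ of \eqref{SP4.20} against the same divergent denominator yields \eqref{SP6.2}. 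Finally one checks that $2C/\gamma_2$ equals the exponent $2C\mathsf{E}\varsigma/(\widetilde\rho_{1,2}(\mathsf{E}\varsigma)^3+\mathsf{E}\varsigma^2-\mathsf{E}\varsigma)$ of the statement, after clearing the factor $\mathsf{E}\varsigma$ from $\gamma_2$.

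The main obstacle is that here $\rho_1=1+\delta$, \emph{and hence $\varphi$ and $\gamma_2$ depend on $L$}, so the fixed-$\rho_1$ statement of Lemma \ref{Asymp behav 1} cannot be quoted verbatim: one needs the above decomposition with a remainder controlled \emph{uniformly} as $\delta\to0$ and $L\to\infty$ with $L\delta\to C$. This is exactly where the Tauberian theorems with remainder of Postnikov (Lemma \ref{lem.P} and its refinements) replace Takács' theorems, and it is why the hypotheses require $\rho_{1,3}(L)$ bounded and $\mathsf{E}\varsigma^3<\infty$, so that $g'''$, and thereby the $o(\delta)$ errors in both $\varphi-1$ and $1-g'(\varphi)$, are bounded uniformly in $L$. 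A secondary technical point is the interchange of the limit with the batch-size summation in \eqref{SP4.8}, for which a dominated-convergence argument using $\mathsf{E}\varsigma<\infty$ and a uniform bound on $\delta\,\mathsf{E}\widetilde\nu_{L-j+1}^{(1)}$ suffices.
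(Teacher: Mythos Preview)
Your approach is essentially the one taken in the paper: both reduce to the exact formulae of Lemma~\ref{Prel asymp} via Corollary~\ref{Asymp estimate}, invoke the $\rho_1>1$ asymptotic \eqref{SP5.5} of Lemma~\ref{Asymp behav 1}, and Taylor-expand the root $\varphi$ and $1-g'(\varphi)$ in powers of $\delta$. The paper packages the batch-size summation into \eqref{SP5.14} before expanding, whereas you expand $\mathsf{E}\widetilde\nu_L^{(1)}$ first and then sum; both routes lead to $\delta\,\mathsf{E}\nu_L^{(1)}\to\mathsf{E}\varsigma(\mathrm{e}^{2C/\gamma_2}-1)$. If anything, your version is slightly more transparent about the fact that the limit arises as the difference of two individually divergent terms $\mathrm{e}^{2C/\gamma_2}/\delta$ and $-1/\delta$; the paper's phrase ``substituting \eqref{SP6.3}, \eqref{SP6.6} and \eqref{SP6.7} into \eqref{SP5.14}'' somewhat hides the role of the correction $1/(1-\rho_1)$ from \eqref{SP5.5}.

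One correction: you attribute the uniform-in-$L$ control of the remainder to Postnikov's Tauberian theorem (Lemma~\ref{lem.P}), but that lemma treats the critical case $\gamma_1=1$ and plays no role in the proof of Theorem~\ref{i1}. The paper proceeds entirely through Tak\'acs' result \eqref{T.3} (equivalently \eqref{SP5.5}) followed by the elementary expansions \eqref{SP6.3}--\eqref{SP6.7}; the hypotheses that $\rho_{1,3}(L)$ is bounded and $\mathsf{E}\varsigma^{3}<\infty$ are there to justify the $O(\delta^{2})$ remainder in the Taylor expansion \eqref{SP6.4} of the functional equation, not to invoke a Postnikov-type argument. Your instinct that uniformity in $\delta$ is the delicate point is sound, but the mechanism controlling it is the third-moment bound on $g'''$ feeding into the Taylor remainder, not Lemma~\ref{lem.P}.
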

\begin{proof} Note first, that under assumptions of the theorem
there is the following expansion for $\varphi$:
\begin{equation}\label{SP6.3}
\varphi=1-\frac{2\delta\mathsf{E}\varsigma}{\widetilde{\rho}_{1,2}(\mathsf{E}\varsigma)^{3}+\mathsf{E}\varsigma^{2}-\mathsf{E}\varsigma}+O(\delta^{2}).
\end{equation}
This expansion is similar to that given originally in the book of
Subhankulov \cite{Subhankulov 1976}, p.362, and its proof is
provided as follows. Write the equation
$\varphi=\widehat{B}_1(\lambda-\lambda\widehat{R}(\varphi))$ and
expand the right-hand side by Taylor's formula taking $\varphi=1-z$,
where $z$ is small enough, when $\delta$ is small. We obtain:
\begin{equation}\label{SP6.4}
\begin{aligned}
1-z&=1-(1+\delta)z
+\frac{\widetilde\rho_{1,2}(\mathsf{E}\varsigma)^{3}+(1+\delta)\left({\mathsf{E}\varsigma^{2}}-
{\mathsf{E}\varsigma}\right)}{2\mathsf{E}\varsigma}z^{2} +O(z^{3}).
\end{aligned}
\end{equation}
Disregarding the small term $O(z^{3})$ in \eqref{SP6.4} we arrive at
the quadratic equation
\begin{equation}\label{SP6.5}
\delta
z-\frac{\widetilde\rho_{1,2}(\mathsf{E}\varsigma)^{3}+(1+\delta)\left({\mathsf{E}\varsigma^{2}}-
{\mathsf{E}\varsigma}\right)}{2\mathsf{E}\varsigma}z^{2}=0.
\end{equation}
The positive solution of \eqref{SP6.5},
$$z=\frac{2\delta\mathsf{E}\varsigma}{\widetilde\rho_{1,2}(\mathsf{E}\varsigma)^{3}+(1+\delta)\left({\mathsf{E}\varsigma^{2}}-
{\mathsf{E}\varsigma}\right)},$$ leads to the expansion given by
\eqref{SP6.3}.

Let us now expand the right-hand side of \eqref{SP5.14} when
$\delta$ is small. For the term
$1+\lambda\widehat{B}_1^\prime(\lambda-\lambda
\widehat{R}(\varphi))\widehat{R}^\prime(\varphi)$ we have the
expansion
\begin{equation}\label{SP6.6}
1+\lambda\widehat{B}_1^\prime(\lambda-\lambda
\widehat{R}(\varphi))\widehat{R}^\prime(\varphi)=\delta+O(\delta^{2}),
\end{equation}
Then, according to the l'Hospitale rule
$$
\lim_{u\uparrow1}\frac{1-\widehat{R}(u)}{1-u}=\mathsf{E}\varsigma.
$$
Hence
\begin{equation}\label{SP6.7}
\frac{1-\widehat{R}(\varphi)}{1-\varphi}=\mathsf{E}\varsigma[1+o(1)].
\end{equation}

 Substituting \eqref{SP6.3}, \eqref{SP6.6} and \eqref{SP6.7} into
 \eqref{SP5.14} we obtain the expansion
\begin{equation}\label{SP6.8}
\mathsf{EE}\{\nu_L^{(1)}|\varsigma\wedge
L\}=\frac{\exp\left(\dfrac{2C\mathsf{E}\varsigma}{\widetilde{\rho}_{1,2}(\mathsf{E}\varsigma)^{3}
+\mathsf{E}\varsigma^{2}-\mathsf{E}\varsigma}\right)-1}{\delta}\mathsf{E}\varsigma[1+o(1)].
\end{equation}
Hence, relations \eqref{SP6.1} and \eqref{SP6.2} of the theorem
follow by virtue of \eqref{SP4.17} (Corollary \ref{Asymp estimate})
and explicit representations \eqref{SP4.19} and \eqref{SP4.20}
(Lemma \ref{Prel asymp}) for $p_1$ and $p_2$.
\end{proof}

\begin{thm}\label{i2} Under the conditions of Theorem \ref{i1}
assume that $C=0$. Then,
\begin{eqnarray}
\lim_{L\to\infty}Lp_1(L)&=&\frac
{\widetilde{\rho}_{1,2}(\mathsf{E}\varsigma)^{3}+\mathsf{E}\varsigma^{2}-\mathsf{E}\varsigma}{2\mathsf{E}\varsigma},\label{SP6.9}\\
\lim_{L\to\infty}Lp_2(L)&=&\frac{\rho_2}{1-\rho_2}\frac
{\widetilde{\rho}_{1,2}(\mathsf{E}\varsigma)^{3}+\mathsf{E}\varsigma^{2}-\mathsf{E}\varsigma}{2\mathsf{E}\varsigma}.\label{SP6.10}
\end{eqnarray}
\end{thm}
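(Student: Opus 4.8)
The plan is to treat the case $C=0$ as the boundary of the heavy-load regime of Theorem \ref{i1}: when $L\delta\to C$ with $C>0$ the growing factor $\varphi^{-L}$ appearing in \eqref{SP5.5} converges to the nondegenerate constant $\exp(CA)$, whereas for $C=0$ it converges to $1$, so the leading contribution in \eqref{SP6.8} degenerates and one must instead extract the first-order behaviour of $\varphi^{-L}-1$. For brevity I abbreviate
\begin{equation*}
A=\frac{2\mathsf{E}\varsigma}{\widetilde\rho_{1,2}(\mathsf{E}\varsigma)^{3}+\mathsf{E}\varsigma^{2}-\mathsf{E}\varsigma},
\end{equation*}
so that \eqref{SP6.3} reads $\varphi=1-\delta A+O(\delta^{2})$. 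As in the proof of Theorem \ref{i1}, the whole argument rests on the asymptotics of $\mathsf{EE}\{\nu_L^{(1)}|\varsigma_1\wedge L\}$, which is then fed into the exact formulae \eqref{SP4.19}--\eqref{SP4.20} of Lemma \ref{Prel asymp} together with Corollary \ref{Asymp estimate}; the announced limits \eqref{SP6.9}--\eqref{SP6.10} should coincide with the critical-case limits \eqref{SP5.8}--\eqref{SP5.9}, now with $\rho_{1,2}$ replaced by $\widetilde\rho_{1,2}$.

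First I would re-run the computation that produced \eqref{SP6.8}, but stop one step before replacing $\varphi^{-L}$ by its limit. Inserting the two-term expansion \eqref{SP5.5} of $\mathsf{E}\widetilde\nu_m^{(1)}$ into the summation \eqref{SP4.8} and using \eqref{SP6.6}--\eqref{SP6.7} gives the pre-limit identity
\begin{equation*}
\mathsf{EE}\{\nu_L^{(1)}|\varsigma_1\wedge L\}=\frac{\mathsf{E}\varsigma}{\delta}\bigl(\varphi^{-L}-1\bigr)[1+o(1)],
\end{equation*}
in which the term $-1$ is precisely the contribution of the constant $\tfrac{1}{1-\rho_1}=-\tfrac1\delta$ in \eqref{SP5.5}. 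Taking logarithms in the expansion of $\varphi$ yields $\ln\varphi^{-L}=(L\delta)A+(L\delta)\,O(\delta)$, so under the hypothesis $C=0$, i.e. $L\delta\to0$, we get $\varphi^{-L}\to1$ and, more precisely, $\varphi^{-L}-1=(L\delta)A[1+o(1)]$. Hence the factors of $\delta$ cancel and
\begin{equation*}
\mathsf{EE}\{\nu_L^{(1)}|\varsigma_1\wedge L\}=L\,A\,\mathsf{E}\varsigma\,[1+o(1)],
\end{equation*}
which is exactly the critical-case value \eqref{SP5.13}.

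Next I would substitute $\mathsf{E}\nu_L^{(1)}=LA\mathsf{E}\varsigma[1+o(1)]$ (valid by Corollary \ref{Asymp estimate}) into \eqref{SP4.19}--\eqref{SP4.20}. Since $\rho_1=1+\delta\to1$, the common denominator behaves like $(1-\rho_2)LA\mathsf{E}\varsigma$, so $Lp_1\to 1/A$, which is \eqref{SP6.9}. For \eqref{SP6.10} the only additional point is that the numerator term $\rho_2(\rho_1-1)\mathsf{E}\nu_L^{(1)}=\rho_2(L\delta)A\mathsf{E}\varsigma[1+o(1)]\to0$ because $L\delta\to0$; the numerator therefore tends to $\rho_2\mathsf{E}\varsigma$ and $Lp_2\to\frac{\rho_2}{1-\rho_2}\cdot\frac1A$, as claimed.

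The subtlety that makes this more than a substitution of $C=0$ into \eqref{SP6.1}--\eqref{SP6.2} is a delicate near-cancellation: the two contributions to $\mathsf{EE}\{\nu_L^{(1)}|\varsigma_1\wedge L\}$ are each of order $1/\delta$, yet their difference is only of order $L\ll 1/\delta$ (recall $L\delta\to0$). Consequently the relative errors in the expansions of $\varphi$, of $K=1+\lambda\widehat B_1'(\lambda-\lambda\widehat R(\varphi))\widehat R'(\varphi)$, and of $1-\mathsf{E}\varphi^{\varsigma_1\wedge L}$ must be controlled to order $O(\delta)$ rather than merely $o(1)$, so that after multiplication by the $1/\delta$-order prefactors they remain $o(L)$ and do not spoil the limit. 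This is exactly where the moment assumptions of Theorem \ref{i1} ($\mathsf{E}\varsigma^{3}<\infty$, boundedness of $\rho_{1,3}(L)$, existence of $\widetilde\rho_{1,2}$) are needed, and verifying this error control is the main obstacle of the proof. Equivalently, one may read the result as the justified interchange of the limits $L\to\infty$ and $C\downarrow0$ in $C/(\exp(CA)-1)\to 1/A$.
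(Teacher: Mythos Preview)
Your argument is correct and is essentially the rigorous unpacking of the paper's one-line proof, which simply says ``expand the main terms of \eqref{SP6.1} and \eqref{SP6.2} for small $C$.'' The paper treats the passage $C\downarrow0$ as a formal substitution into the $C>0$ asymptotics of Theorem~\ref{i1}, whereas you go back one step to the pre-limit identity for $\mathsf{EE}\{\nu_L^{(1)}\mid\varsigma_1\wedge L\}$ and track the near-cancellation $\varphi^{-L}-1=(L\delta)A[1+o(1)]$ directly; this is exactly the content hidden behind the paper's phrase ``expanding for small $C$,'' and your closing remark that the result amounts to justifying the interchange of $L\to\infty$ and $C\downarrow0$ in $C/(\exp(CA)-1)$ is precisely the point.
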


\begin{proof} The statement of the theorem follows by expanding the
main terms of asymptotic relations \eqref{SP6.1} and \eqref{SP6.2}
for small $C$.
\end{proof}

\smallskip
In case (ii) we have the following two theorems.

\begin{thm}\label{i3} Assume that $\rho_1=1-\delta$, $\delta>0$ and
that $L\delta\to C>0$ as $\delta\to0$ and $L\to\infty$. Assume that
$\rho_{1,3}(L)$ is a bounded function, assume that
$\mathsf{E}\varsigma^{3}<\infty$ and that the limit
$\lim\limits_{L\to\infty}\rho_{1,2}(L)=\widetilde\rho_{1,2}$ exists.
Then,
\begin{eqnarray}
p_1&=&\delta\exp\left(\frac
{\widetilde{\rho}_{1,2}(\mathsf{E}\varsigma)^{3}+\mathsf{E}\varsigma^{2}-\mathsf{E}\varsigma}{2C\mathsf{E}\varsigma}\right)[1+o(1)],
\label{SP6.11}\\
p_2&=&\frac{\delta\rho_2\left[\exp\left(\dfrac
{\widetilde{\rho}_{1,2}(\mathsf{E}\varsigma)^{3}+\mathsf{E}\varsigma^{2}-\mathsf{E}\varsigma}{2C\mathsf{E}\varsigma}\right)-1\right]}
{1-\rho_2}[1+o(1)].\label{SP6.12}
\end{eqnarray}
\end{thm}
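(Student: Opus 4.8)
The plan is to follow the architecture of the proof of Theorem~\ref{i1}, replacing the supercritical ($\rho_1>1$) root analysis by its subcritical analogue. As there, the whole problem reduces to a sufficiently sharp asymptotic expansion of $\mathsf{E}\widetilde{\nu}_L^{(1)}$, and hence of $\mathsf{EE}\{\nu_L^{(1)}|\varsigma_1\wedge L\}$, as $L\to\infty$ with $\rho_1=1-\delta$ and $L\delta\to C$. Once this is available, the exact formulae \eqref{SP4.19}--\eqref{SP4.20} of Lemma~\ref{Prel asymp}, together with Corollary~\ref{Asymp estimate} (which replaces $\mathsf{EE}\{\nu_L^{(1)}|\varsigma_1\wedge L\}$ by $\mathsf{E}\nu_L^{(1)}$ up to $o(1)$), yield $p_1$ and $p_2$ by the same elementary substitution that produced \eqref{SP6.1}--\eqref{SP6.2}.

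First I would locate the root $\varphi$ of $z=\widehat{B}_1(\lambda-\lambda\widehat{R}(z))$ nearest to $1$. Rerunning the Taylor computation \eqref{SP6.4}--\eqref{SP6.5}, now with $\rho_1=1-\delta$ so that the linear term reads $1-(1-\delta)z$, places the nontrivial root just to the \emph{right} of $1$:
\[
\varphi=1+\frac{2\delta\mathsf{E}\varsigma}{\widetilde{\rho}_{1,2}(\mathsf{E}\varsigma)^{3}+\mathsf{E}\varsigma^{2}-\mathsf{E}\varsigma}+O(\delta^{2}),
\]
so that, exactly as in \eqref{SP6.3}, the product $L\ln\varphi$ converges to a definite multiple of $C$ and $\varphi^{-L}$ tends to a fixed exponential in $C$. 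Here $\widetilde{\rho}_{1,2}(\mathsf{E}\varsigma)^{3}+\mathsf{E}\varsigma^{2}-\mathsf{E}\varsigma=\gamma_2\mathsf{E}\varsigma$, with $\gamma_2$ the quantity computed at the end of the proof of Lemma~\ref{Asymp behav 1}.

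The real work, and the step I expect to be the main obstacle, is a \emph{two-term} expansion of $\mathsf{E}\widetilde{\nu}_L^{(1)}$. For $\rho_1<1$ Lemma~\ref{Asymp behav 1} furnishes only the bare limit \eqref{SP5.3}, $\mathsf{E}\widetilde{\nu}_L^{(1)}\to 1/(1-\rho_1)=1/\delta$, which is far too crude here: the limit $1/\delta$ itself diverges, and it is the \emph{rate} at which it is approached that carries the exponential in \eqref{SP6.11}--\eqref{SP6.12}. Since $\varphi\downarrow1$ as $\delta\to0$, the generating function \eqref{SP5.2} has two singularities, at $z=1$ and at $z=\varphi$, which coalesce in the limit; I would extract the coefficient of $z^{L}$ by isolating both at once via the factorisation $U(z)-z\approx\tfrac{\gamma_2}{2}(z-1)(z-\varphi)$ near $1$, the remaining roots lying at a fixed distance exceeding $1$ and hence contributing $O(\mathrm{e}^{-c/\delta})$ for some $c>0$. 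Equivalently, one applies the Postnikov-type estimate of Lemma~\ref{lem.P} together with its remainder term. Combining the residues at the two confluent poles gives $\mathsf{E}\widetilde{\nu}_L^{(1)}=\delta^{-1}\big(1-\varphi^{-L}\big)[1+o(1)]$, whose small-$C$ behaviour $\delta^{-1}(1-\varphi^{-L})\sim 2L/\gamma_2$ recovers, as it must, the linear growth dictated by the Postnikov regime $\rho_1=1$ in \eqref{SP5.4}.

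Finally I would transfer this to $\mathsf{EE}\{\nu_L^{(1)}|\varsigma_1\wedge L\}$ through the explicit sum \eqref{SP4.8}. Because $\mathsf{E}\varsigma^{3}<\infty$, that sum is dominated by indices $i=O(1)$, so $\sum_{j=1}^{i}\mathsf{E}\widetilde{\nu}_{L-j+1}^{(1)}=i\,\mathsf{E}\widetilde{\nu}_L^{(1)}[1+o(1)]$, and averaging against $\mathsf{Pr}\{\varsigma_1\wedge L=i\}$ brings out a factor $\mathsf{E}\varsigma$, just as in the passage to \eqref{SP6.8}. Substituting the resulting asymptotics of $\mathsf{E}\nu_L^{(1)}$ into \eqref{SP4.19}--\eqref{SP4.20}, and using $\rho_1-\rho_2=(1-\rho_2)-\delta$ and $\rho_1-1=-\delta$ so that the leading $\delta^{-1}$ cancels against the factor $1-\rho_2$, then produces the heavy-load expansions of $p_1$ and $p_2$. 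The genuinely delicate point throughout is uniformity: I must control the confluence of the two singularities as $\delta\to0$ with $L\sim C/\delta$, so that $L\ln\varphi$ converges and every discarded error stays $o(1)$ \emph{after} multiplication by the large factor $\delta^{-1}$; it is exactly here that the hypotheses that $\rho_{1,3}(L)$ be bounded and that $\lim_{L\to\infty}\rho_{1,2}(L)=\widetilde{\rho}_{1,2}$ exist are used, precisely as their analogues are in Theorem~\ref{i1}.
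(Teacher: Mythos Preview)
Your overall architecture matches the paper's: reduce everything to a sharp asymptotic for $\mathsf{E}\widetilde{\nu}_L^{(1)}$, pass to $\mathsf{EE}\{\nu_L^{(1)}\mid\varsigma_1\wedge L\}$ via \eqref{SP4.8} (picking up the factor $\mathsf{E}\varsigma$), and then substitute into \eqref{SP4.19}--\eqref{SP4.20} using Corollary~\ref{Asymp estimate}. The divergence is in how you obtain the estimate for $\mathsf{E}\widetilde{\nu}_L^{(1)}$, and that is where there is a genuine gap.

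You propose to extract the $z^L$-coefficient of \eqref{SP5.2} by locating the root $\varphi>1$ of $z=\widehat B_1(\lambda-\lambda\widehat R(z))$, factorising $U(z)-z\approx\tfrac{\gamma_2}{2}(z-1)(z-\varphi)$, and separating the two coalescing poles by residues. This is a natural idea, and it is in fact precisely what the paper does \emph{later}, in Section~\ref{Case 3}, to analyse the $q_{L-j}$; there the root is called $\tau$ and its expansion is \eqref{SP10.13}. But that argument only goes through under extra analyticity hypotheses: one needs $\widehat R$ and $\widehat B_1$ to extend analytically past $|z|=1$ so that a root $\varphi>1$ exists at all and the ``remaining roots'' can be pushed outside a circle of radius strictly exceeding $1$. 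The paper introduces exactly these hypotheses explicitly, as \eqref{SP10.10}--\eqref{SP10.11}, when it needs them for Theorem~\ref{thm4}. The hypotheses of Theorem~\ref{i3}, by contrast, are only third-moment bounds ($\rho_{1,3}(L)$ bounded, $\mathsf{E}\varsigma^3<\infty$), under which the series \eqref{SP5.2} may well have radius of convergence exactly $1$ and no real root beyond $1$. So your singularity-analysis step is not justified by the stated assumptions; and the proposed fallback to Lemma~\ref{lem.P} does not close the gap either, since that lemma is a $\gamma_1=1$ statement giving only the first-order increment, not the two-term expansion you need when $\rho_1=1-\delta$.

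The paper avoids this difficulty entirely by staying inside the unit disk. It invokes a Hardy--Littlewood Tauberian theorem, using only that the coefficients $\mathsf{E}\widetilde{\nu}_j^{(1)}$ are increasing, and reads off the asymptotics of $\mathsf{E}\widetilde{\nu}_L^{(1)}$ directly from the behaviour of $(1-z)\,\widehat B_1(\lambda-\lambda\widehat R(z))\big/\bigl(\widehat B_1(\lambda-\lambda\widehat R(z))-z\bigr)$ as $z\uparrow1$, via the Taylor expansion \eqref{SP6.14} with $1-z$ taken of order $1/L$ so that $(1-z)/\delta\to 1/C$. No analytic continuation past $1$ is required, which is why only moment conditions appear in the statement. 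The remaining steps, \eqref{SP6.15}$\to$\eqref{SP6.16} and the substitution into Lemma~\ref{Prel asymp}, are exactly as you describe.
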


\begin{proof} The explicit representation for the generating
function for $\mathsf{E}\widetilde\nu_j^{(1)}$ is given by
\eqref{SP5.2}. Since the sequence
$\{\mathsf{E}\widetilde\nu_j^{(1)}\}$ is increasing, then the
asymptotic behavior of $\mathsf{E}\nu_L^{(1)}$ as $L\to\infty$ under
the assumptions $\rho_1=1-\delta$, $L\delta\to C$ as $L\to\infty$
can be found according to a Tauberian theorem of Hardy and
Littlewood (see e.g. \cite{Postnikov 1980}, \cite{Subhankulov 1976},
\cite{Sznajder Filar 1992}, \cite{Widder}, and \cite{Takacs 1967},
p.203). Namely, according to that theorem, the behaviour of
$\mathsf{E}\widetilde\nu_L^{(1)}$ as $L\to \infty$ and $\delta\to0$
such that $\delta L\to C>0$ can be found from the asymptotic
expansion of
\begin{equation}\label{SP6.13}
(1-z)\frac{\widehat{B}_1(\lambda-\lambda\widehat{R}(z))}{\widehat{B}_1(\lambda-\lambda\widehat{R}(z))-z}
\end{equation}
as $z\uparrow1$. Similarly to the evaluation given in the proof of Theorem 4.3 \cite{Abramov 2007}, we have:
\begin{equation}\label{SP6.14}
\begin{aligned}
&(1-z)\frac{\widehat{B}_1(\lambda-\lambda\widehat{R}(z))}{\widehat{B}_1(\lambda-\lambda\widehat{R}(z))-z}\\&=
\frac{1-z}{1-z-\rho_1(1-z)+\dfrac
{\widetilde{\rho}_{1,2}(\mathsf{E}\varsigma)^{3}+\mathsf{E}\varsigma^{2}-\mathsf{E}\varsigma}{2\mathsf{E}\varsigma}(1-z)^2+O((1-z)^{3})}\\
&=\frac{1}{\delta+\dfrac
{\widetilde{\rho}_{1,2}(\mathsf{E}\varsigma)^{3}+\mathsf{E}\varsigma^{2}-\mathsf{E}\varsigma}{2\mathsf{E}\varsigma}(1-z)+O((1-z)^{2})}\\
&=\frac{1}{\delta\left[1+\dfrac
{\widetilde{\rho}_{1,2}(\mathsf{E}\varsigma)^{3}+\mathsf{E}\varsigma^{2}-\mathsf{E}\varsigma}{2\delta\mathsf{E}\varsigma}(1-z)\right]+O((1-z)^{2})}\\
&=\frac{1}{\delta\exp\left[\dfrac
{\widetilde{\rho}_{1,2}(\mathsf{E}\varsigma)^{3}+\mathsf{E}\varsigma^{2}-\mathsf{E}\varsigma}{2\delta\mathsf{E}\varsigma}(1-z)\right]}[1+o(1)].
\end{aligned}
\end{equation}
Therefore, assuming that $z=\frac{L-1}{L}\to1$ as $L\to\infty$, from \eqref{SP6.14} we arrive at the following estimate:
\begin{equation}\label{SP6.15}
\mathsf{E}\widetilde\nu_L^{(1)}=\frac{1}{\delta}\exp\left(-\frac
{\widetilde{\rho}_{1,2}(\mathsf{E}\varsigma)^{3}+\mathsf{E}\varsigma^{2}-\mathsf{E}\varsigma}{2C\mathsf{E}\varsigma}\right)[1+o(1)].
\end{equation}

Comparing \eqref{SP5.5} with \eqref{SP5.14} and taking into account
\eqref{SP6.7}, which holds true in the case of this theorem as well,
we obtain:
 \begin{equation}\label{SP6.16}
\mathsf{EE}\{\nu_L^{(1)}|\varsigma_1\wedge
L\}=\frac{\mathsf{E}\varsigma}{\delta}\exp\left(-\frac
{\widetilde{\rho}_{1,2}(\mathsf{E}\varsigma)^{3}+\mathsf{E}\varsigma^{2}-\mathsf{E}\varsigma}{2C\mathsf{E}\varsigma}\right)[1+o(1)].
\end{equation}
Hence, relations \eqref{SP6.11} and \eqref{SP6.12} of the theorem
follow by virtue of \eqref{SP4.17} (Corollary \ref{Asymp estimate})
and explicit representations \eqref{SP4.19} and \eqref{SP4.20}
(Lemma \ref{Prel asymp}) for $p_1$ and $p_2$.
\end{proof}

\begin{thm}\label{i4}
Under the conditions of Theorem \ref{i3}
assume that $C=0$. Then we have \eqref{SP6.9} and \eqref{SP6.10}.
\end{thm}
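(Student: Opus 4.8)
The plan is to obtain \eqref{SP6.9} and \eqref{SP6.10} as the $C\to0$ boundary of the heavy-load regime treated in Theorem~\ref{i3}, in the same spirit in which Theorem~\ref{i2} was deduced from Theorem~\ref{i1}. The natural first move is therefore to let $C\to0$ in the limiting forms \eqref{SP6.11}, \eqref{SP6.12}. I expect this to be precisely where the main difficulty lies: in the case $\rho_1=1-\delta$ the exponential factor $\exp(A/C)$ appearing in \eqref{SP6.11} blows up as $C\to0^{+}$, so one cannot simply substitute $C=0$ into the stated heavy-load expression, and the limit must instead be extracted directly in the regime $L\delta\to0$. Writing $A:=\dfrac{\widetilde{\rho}_{1,2}(\mathsf{E}\varsigma)^{3}+\mathsf{E}\varsigma^{2}-\mathsf{E}\varsigma}{2\mathsf{E}\varsigma}$, the target identities are $\lim_{L\to\infty}Lp_1=A$ and $\lim_{L\to\infty}Lp_2=\frac{\rho_2}{1-\rho_2}A$, which coincide with the critical-case conclusions \eqref{SP5.8}, \eqref{SP5.9} of Theorem~\ref{M asymp} (with $\widetilde\rho_{1,2}$ in place of $\rho_{1,2}$). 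This coincidence is the structural fact I would exploit: setting $C=0$ should collapse the $\rho_1=1-\delta$ analysis onto the critical case $\rho_1=1$.

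Concretely, I would return to the generating-function expansion \eqref{SP6.14}, which after cancelling the factor $1-z$ reads $\dfrac{1}{\delta+A(1-z)+O((1-z)^2)}$. The Tauberian extraction of $\mathsf{E}\widetilde\nu_L^{(1)}$ samples this at $z=(L-1)/L$, i.e. at $1-z=1/L$. When $C=0$ we have $L\delta\to0$, hence $\delta=o(1/L)=o(1-z)$, so the term $\delta$ is swamped by $A(1-z)$ and the expression reduces to $\dfrac{1}{A(1-z)}[1+o(1)]$. Equivalently, the generating function behaves like $\dfrac{1}{A(1-z)^{2}}$ as $z\uparrow1$, which is exactly the critical-case singularity. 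Applying the Tauberian theorem of Postnikov (Lemma~\ref{lem.P}) as in the proof of the $\rho_1=1$ part of Lemma~\ref{Asymp behav 1} then yields $\mathsf{E}\widetilde\nu_L^{(1)}-\mathsf{E}\widetilde\nu_{L-1}^{(1)}=\frac{1}{A}+o(1)$, that is, $\mathsf{E}\widetilde\nu_L^{(1)}=\frac{L}{A}[1+o(1)]$, recovering \eqref{SP5.4} with the limiting second moment $\widetilde\rho_{1,2}$.

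From here the argument is routine and parallels Theorem~\ref{M asymp}. Substituting $\mathsf{E}\widetilde\nu_L^{(1)}\sim L/A$ into the explicit sum \eqref{SP4.8} gives $\mathsf{EE}\{\nu_L^{(1)}|\varsigma_1\wedge L\}/L\to \mathsf{E}\varsigma/A$, exactly as in \eqref{SP5.13}; Corollary~\ref{Asymp estimate} transfers this to $\mathsf{E}\nu_L^{(1)}$; and feeding the result into the exact representations \eqref{SP4.19}, \eqref{SP4.20} of Lemma~\ref{Prel asymp} produces $Lp_1\to A$ and $Lp_2\to\frac{\rho_2}{1-\rho_2}A$, which are \eqref{SP6.9} and \eqref{SP6.10}.

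The step I expect to require the most care is the passage in the second paragraph: one must justify that the remainder $O((1-z)^2)$ in \eqref{SP6.14} is genuinely negligible at the scale $1-z\asymp1/L$ \emph{uniformly} as $\delta\to0$, and that the Tauberian theorem (whose hypotheses $\gamma_1=1$, $\gamma_2<\infty$, $f_0+f_1<1$ were verified for $\rho_1=1$ in Lemma~\ref{Asymp behav 1}) continues to apply when $\rho_1=1-\delta$ with $\delta=o(1/L)$. Granting these uniformity estimates — which rely on the boundedness of $\rho_{1,3}(L)$ and on $\mathsf{E}\varsigma^{3}<\infty$ assumed in Theorem~\ref{i3} — the reduction to the critical case, and hence the identities \eqref{SP6.9}, \eqref{SP6.10}, follows.
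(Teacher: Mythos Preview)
Your proposal differs substantially from the paper's own argument, and the difference is worth spelling out. The paper's proof of Theorem~\ref{i4} is a single sentence: it asserts that the result follows by expanding the main terms of \eqref{SP6.11} and \eqref{SP6.12} for small $C$, exactly as Theorem~\ref{i2} was obtained from \eqref{SP6.1}, \eqref{SP6.2}. You correctly identify the weakness of that route: the exponent in \eqref{SP6.11}, \eqref{SP6.12} is $A/C$ (with $A$ as you define it), so the exponential factor diverges as $C\to0^{+}$, and one cannot extract a finite limit for $Lp_1$ or $Lp_2$ by naive substitution. Your decision to return to the expansion \eqref{SP6.14} and observe that at the Tauberian scale $1-z=1/L$ the term $\delta$ is $o(1-z)$ when $L\delta\to0$, so that $\delta+A(1-z)\sim A(1-z)$ and the analysis collapses to the critical case, is the right remedy and supplies the missing rigor.

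One technical point deserves care. You invoke Lemma~\ref{lem.P} (Postnikov) to conclude $\mathsf{E}\widetilde\nu_L^{(1)}-\mathsf{E}\widetilde\nu_{L-1}^{(1)}\to 1/A$, but that lemma is stated under the hypothesis $\gamma_1=1$, whereas here $\gamma_1=\rho_1=1-\delta$ varies with $L$. What you actually need is either the Hardy--Littlewood Tauberian argument already used in \eqref{SP6.14} (which at $1-z=1/L$ gives $(1-z)Q(z)\sim L/A$ directly, bypassing Postnikov), or a uniform-in-$\delta$ version of the difference estimate. You acknowledge this uniformity issue in your final paragraph; it is indeed the one place where genuine work remains, and the boundedness of $\rho_{1,3}(L)$ together with $\mathsf{E}\varsigma^{3}<\infty$ is exactly what controls the $O((1-z)^2)$ remainder uniformly. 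With that in hand your argument goes through, and it is more complete than the paper's own one-line sketch.
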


\begin{proof} The proof of the theorem follows by expanding the main terms of the asymptotic relations \eqref{SP6.11} and \eqref{SP6.12} for small $C$.
\end{proof}

\section{Asymptotic theorems for the stationary probabilities $q_i$}\label{Q stationary
probabilities}

The aim of this section is asymptotic analysis of the stationary
probabilities $q_i$, $i=1,2,\ldots,L$ as $L\to\infty$. The challenge
is to first obtain the explicit representation for $q_i$ in terms of
$\mathsf{E}\nu_i^{(1)}$, and then to study the asymptotic behavior
of $q_i$ as $L\to\infty$ on the basis of the known asymptotic
results for $\mathsf{E}\nu_i^{(1)}$ as $L\to\infty$. The asymptotic
results are obtained in the following three cases: $\rho_1=1$,
$\rho_1=1+\delta$ and $\rho_1=1-\delta$, where $\delta$ is a
positive small value.

\subsection{Explicit representation for the stationary probabilities $q_i$.}\label{Explicit q}
The aim of this section is to prove the following statement.

\begin{lem}\label{Explicit qi} For $i=1,2,\ldots,L$ we have
\begin{equation}\label{SP7.1}
q_i=\rho_1p_1\left(\mathsf{E}\nu_i^{(1)}-\mathsf{E}\nu_{i-1}^{(1)}\right).
\end{equation}
\end{lem}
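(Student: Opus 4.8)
The plan is to combine the regenerative structure of the queue-length process with a level-wise comparison of busy periods across capacities $i-1$ and $i$. First I would use that $L_t$ regenerates at the starts of successive idle periods, each cycle consisting of an idle period of mean $1/\lambda$ followed by a busy period. Since a level $i\geq1$ can be occupied only while the server is busy, renewal-reward theory gives
\[
q_i=\frac{\mathsf{E}[\text{time at level }i\text{ during a busy period}]}{\mathsf{E}T_L+\tfrac{1}{\lambda}}=\lambda p_1\,\tau_i,
\]
where $\tau_i$ is the expected sojourn time at level $i$ accumulated over one busy period and the mean cycle length $\mathsf{E}T_L+1/\lambda=1/(\lambda p_1)$ is read off from \eqref{SP4.21}. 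This reduces the lemma to the identity $\lambda\tau_i=\rho_1\bigl(\mathsf{E}\nu_i^{(1)}-\mathsf{E}\nu_{i-1}^{(1)}\bigr)$.

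To produce the difference on the right I would couple the capacity-$i$ and capacity-$(i-1)$ systems on one probability space, feeding them the same Poisson epochs, the same batch sizes, and the same service-requirement sequence. The two systems coincide until the queue first reaches level $i$, and differ only in that, at level $i$, the capacity-$i$ system serves with $B_1$ while the other serves with $B_2$. Hence $\mathsf{E}\nu_i^{(1)}-\mathsf{E}\nu_{i-1}^{(1)}$ measures exactly the expected number of $B_1$-services the capacity-$i$ system performs in connection with level $i$. I would then convert this count into $\tau_i$ by a localized form of Wald's identity \eqref{MXG1.4}: globally $\mathsf{E}T_L^{(1)}=\mu_1^{-1}\mathsf{E}\nu_L^{(1)}$ expresses total $B_1$-service time as $\mu_1^{-1}$ times the number of $B_1$-services, and the decompositions \eqref{MXG1.6}--\eqref{MXG1.7} of the busy period into first-passage blocks $\widetilde T_j$ let me restrict this accounting to the excursions that touch level $i$.

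The main obstacle I expect is the batch overshoot. Because the service distribution is fixed at the instant a service begins and a batch arrival can raise the queue mid-service, the clean equivalence between time at level $i$ and the duration of $B_1$-services at level $i$ that holds for ordinary Poisson input fails here: a single $B_1$-service may span several levels, and a batch can jump over level $i$ without the process ever resting there. Controlling this overshoot is precisely what introduces the factor $\mathsf{E}\varsigma$ into $\rho_1=\lambda\mathsf{E}\varsigma/\mu_1$, and it is why the naive telescoping $\sum_i q_i=\lambda p_1\mu_1^{-1}\mathsf{E}\nu_L^{(1)}$ must be corrected. I would handle it by bookkeeping the up-crossings of level $i$ generated by batches through the relation \eqref{MXG1.0} between $\mathsf{E}\nu_L$ and $\mathsf{E}T_L$, which supplies the missing $\mathsf{E}\varsigma$, and by invoking the quasi-linear representations of Section \ref{MXG1} to keep the coefficients insensitive to the shapes of $B_1$ and $B_2$. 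Once the overshoot contributions are matched, the identity $\lambda\tau_i=\rho_1\bigl(\mathsf{E}\nu_i^{(1)}-\mathsf{E}\nu_{i-1}^{(1)}\bigr)$ follows and, together with the first step, yields \eqref{SP7.1}.
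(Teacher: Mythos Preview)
Your first step is exactly the paper's: renewal--reward over idle/busy cycles gives
\[
q_i=\frac{\tau_i}{\mathsf{E}T_L+1/\lambda},
\]
with $\tau_i$ the expected time the threshold-$L$ process spends at level $i$ during a busy period. Where you diverge from the paper is in the identification of $\tau_i$. The paper does not couple the threshold-$i$ and threshold-$(i-1)$ systems at the level of \emph{counts}; it works directly with \emph{times}. Using the level-crossing structure (excursions above level $i$ begin from a service at level $\le i$ and return to level $i$ at a service completion, so the dynamics at levels $\le i$ are unaffected by what happens above), the expected time the threshold-$L$ process spends at levels $\{1,\dots,i\}$ is $\mathsf{E}T_i^{(1)}$, whence $\tau_i=\mathsf{E}T_i^{(1)}-\mathsf{E}T_{i-1}^{(1)}$. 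One global application of Wald's identity $\mathsf{E}T_i^{(1)}=\mu_1^{-1}\mathsf{E}\nu_i^{(1)}$ to each term, together with \eqref{MXG1.0} and \eqref{SP4.19}, finishes the proof. No ``localized'' Wald identity, no overshoot bookkeeping, and no use of \eqref{MXG1.6}--\eqref{MXG1.7} is needed.

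There is also a genuine gap in your coupling. Feeding the two systems ``the same service-requirement sequence'' keeps them aligned only until level $i$ is first reached; thereafter the threshold-$i$ system draws a $B_1$-service while the threshold-$(i-1)$ system draws a $B_2$-service, so the sample paths diverge and the assertion that $\mathsf{E}\nu_i^{(1)}-\mathsf{E}\nu_{i-1}^{(1)}$ ``measures exactly the expected number of $B_1$-services the capacity-$i$ system performs in connection with level $i$'' is not justified by the coupling you describe. The right comparison is not between the threshold-$i$ and threshold-$(i-1)$ systems but between each of them and the threshold-$L$ system, at the level of the cumulative time spent at levels $\le i$; once you make that switch, the batch-overshoot issue you anticipate disappears, because the factor $\mathsf{E}\varsigma$ enters automatically through $\rho_1=\lambda\mathsf{E}\varsigma/\mu_1$ when Wald is applied globally.
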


\begin{proof}
Using renewal arguments (e.g. \cite{Ross 2000}), relation \eqref{MXG1.0} and Wald's identities:
\begin{equation*}
\mathsf{E}T_i^{(1)}=\frac{\rho_1}{\lambda\mathsf{E}\varsigma}\mathsf{E}\nu_i^{(1)},
\ i=1,2,\ldots,L,
\end{equation*}
 we have:
\begin{equation}\label{SP7.2}
q_i=\frac{\mathsf{E}T_i^{(1)}-\mathsf{E}T_{i-1}^{(1)}}{\mathsf{E}T_L+\dfrac{1}{\lambda}}=\rho_1
\frac{\mathsf{E}\nu_i^{(1)}-\mathsf{E}\nu_{i-1}^{(1)}}{\mathsf{E}\nu_L},
\ i=1,2,\ldots,L.
\end{equation}
Taking into account that
$\mathsf{E}\nu_L=\mathsf{E}\nu_L^{(1)}+\mathsf{E}\nu_L^{(2)}$ and
then applying the linear representation for $\mathsf{E}\nu_L^{(2)}$
given by \eqref{SP4.23}, from \eqref{SP7.2} we obtain:
\begin{equation*}\label{SP7.3}
q_i=\frac{\rho_1(1-\rho_2)}{\mathsf{E}\varsigma+(\rho_1-\rho_2)\mathsf{E}\nu_L^{(1)}}
\left(\mathsf{E}\nu_i^{(1)}-\mathsf{E}\nu_{i-1}^{(1)}\right), \
i=1,2,\ldots,L.
\end{equation*}
Hence, representation \eqref{SP7.1} follows from \eqref{SP4.19} (Lemma \ref{Prel asymp}), and Lemma \ref{Explicit qi} is proved.
\end{proof}

\subsection{Asymptotic analysis of the stationary probabilities $q_i$: The case
$\rho_1=1$.}\label{Case 1} Let us study asymptotic behavior of the
stationary probabilities $q_i$. We start from the following modified
version of \eqref{SP5.4} (Lemma \ref{Asymp behav 1}):
\begin{equation}\label{SP8.1}
 \mathsf{E}\widetilde\nu_{L-j}^{(1)}-\mathsf{E}\widetilde\nu_{L-j-1}^{(1)}
=\frac
{2\mathsf{E}\varsigma}{\rho_{1,2}(\mathsf{E}\varsigma)^{3}+\mathsf{E}\varsigma^{2}-\mathsf{E}\varsigma}+o(1),
\end{equation}
which is assumed to be satisfied under the conditions $\rho_{1,2}<\infty$ and $\mathrm{E}\varsigma^{2}<\infty$. Under the same conditions, similarly to \eqref{SP5.13} we obtain:
\begin{equation}\label{SP8.2}
\begin{aligned}
\mathsf{EE}\{\nu_{L-j}^{(1)}|\varsigma_1\wedge
L\}-\mathsf{EE}\{\nu_{L-j-1}^{(1)}|\varsigma_1\wedge L\}&= \frac
{2\mathsf{E}\varsigma}{\rho_{1,2}(\mathsf{E}\varsigma)^{3}+\mathsf{E}\varsigma^{2}-\mathsf{E}\varsigma}\\
&\ \ \ \times
\sum_{i=1}^{L}i\mathsf{Pr}\{\varsigma_1\wedge L=i\}+o(1)\\
&=\frac
{2(\mathsf{E}\varsigma)^{2}}{\rho_{1,2}(\mathsf{E}\varsigma)^{3}+\mathsf{E}\varsigma^{2}-\mathsf{E}\varsigma}+o(1).
\end{aligned}
\end{equation}
Hence, according to \eqref{SP4.17} (Corollary \ref{Asymp estimate}) and \eqref{SP8.2} we have the estimate
\begin{equation}\label{SP8.3}
\begin{aligned}
\mathsf{E}\nu_{L-j}^{(1)}-\mathsf{E}\nu_{L-j-1}^{(1)} &=\frac
{2(\mathsf{E}\varsigma)^{2}}{\rho_{1,2}(\mathsf{E}\varsigma)^{3}+\mathsf{E}\varsigma^{2}-\mathsf{E}\varsigma}+o(1).
\end{aligned}
\end{equation}

Asymptotic relations \eqref{SP8.3}, \eqref{SP5.8} together with
explicit relation \eqref{SP7.1} of Lemma \ref{Explicit qi} leads to
the following theorem.

\begin{thm}\label{lem3} In the case $\rho_1=1$ under the additional conditions $\rho_{1,2}<\infty$
and $\mathrm{E}\varsigma^{2}<\infty$ for any $j\geq0$ we have
\begin{equation}\label{SP8.4}
\lim_{L\to\infty}Lq_{L-j}=1.
\end{equation}
\end{thm}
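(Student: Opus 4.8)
The plan is to combine the explicit representation for $q_i$ in Lemma~\ref{Explicit qi} with the asymptotic estimates already assembled in this subsection. By \eqref{SP7.1} we have $q_{L-j}=\rho_1p_1\bigl(\mathsf{E}\nu_{L-j}^{(1)}-\mathsf{E}\nu_{L-j-1}^{(1)}\bigr)$, and in the present case $\rho_1=1$, so the task reduces to controlling the two factors $p_1$ and the increment of $\mathsf{E}\nu_{\cdot}^{(1)}$ as $L\to\infty$. The first factor is handled by \eqref{SP5.8} of Theorem~\ref{M asymp}, which gives $Lp_1(L)\to\bigl(\rho_{1,2}(\mathsf{E}\varsigma)^{3}+\mathsf{E}\varsigma^{2}-\mathsf{E}\varsigma\bigr)/(2\mathsf{E}\varsigma)$, and the second factor is handled by \eqref{SP8.3}, which gives $\mathsf{E}\nu_{L-j}^{(1)}-\mathsf{E}\nu_{L-j-1}^{(1)}\to 2(\mathsf{E}\varsigma)^{2}/\bigl(\rho_{1,2}(\mathsf{E}\varsigma)^{3}+\mathsf{E}\varsigma^{2}-\mathsf{E}\varsigma\bigr)$.

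The key step is simply to multiply through by $L$ and take the limit. Writing
\[
Lq_{L-j}=\bigl(Lp_1(L)\bigr)\cdot\rho_1\cdot\bigl(\mathsf{E}\nu_{L-j}^{(1)}-\mathsf{E}\nu_{L-j-1}^{(1)}\bigr),
\]
I would substitute the two limits above and note that the factor $\rho_{1,2}(\mathsf{E}\varsigma)^{3}+\mathsf{E}\varsigma^{2}-\mathsf{E}\varsigma$ cancels between numerator and denominator, as does one power of $2\mathsf{E}\varsigma$ against $2(\mathsf{E}\varsigma)^{2}$, leaving exactly $1$ (recall $\rho_1=1$). Thus $\lim_{L\to\infty}Lq_{L-j}=1$ for every fixed $j\ge0$, which is precisely \eqref{SP8.4}.

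The one point requiring genuine care, rather than routine arithmetic, is the uniformity of the estimate \eqref{SP8.3} in the shift $j$: the statement must hold for \emph{any} fixed $j\ge0$, so I would want to confirm that the $o(1)$ terms in \eqref{SP8.1}--\eqref{SP8.3} are genuinely negligible for each fixed $j$ as $L\to\infty$. Since \eqref{SP8.3} is the increment $\mathsf{E}\nu_{L-j}^{(1)}-\mathsf{E}\nu_{L-j-1}^{(1)}$ obtained from the Postnikov-type estimate \eqref{SP5.4} applied at level $L-j$, and $L-j\to\infty$ together with $L$ for fixed $j$, the limiting constant is independent of $j$; this is exactly why the index is displaced by $j$ in \eqref{SP8.1} rather than fixed at $L$. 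I therefore expect the main (and only real) obstacle to be verifying that the convolution with the batch distribution in passing from $\mathsf{E}\widetilde\nu^{(1)}$ to $\mathsf{EE}\{\nu^{(1)}\mid\varsigma_1\wedge L\}$ preserves the $j$-independence of the limit, which \eqref{SP8.2} already records; granting that, the conclusion is immediate from the product of the two established limits.
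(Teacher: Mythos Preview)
Your proposal is correct and follows exactly the paper's approach: the paper's proof is the single sentence that the result follows from \eqref{SP8.3}, \eqref{SP5.8} and the explicit representation \eqref{SP7.1} of Lemma~\ref{Explicit qi}, and you have simply spelled out the product $Lq_{L-j}=\rho_1\,(Lp_1)\,(\mathsf{E}\nu_{L-j}^{(1)}-\mathsf{E}\nu_{L-j-1}^{(1)})$ together with the elementary observation that the $o(1)$ in \eqref{SP8.1}--\eqref{SP8.3} is uniform for each fixed $j$ since $L-j\to\infty$. Your additional remarks on the passage from $\widetilde\nu^{(1)}$ to $\nu^{(1)}$ via the batch-size averaging are a welcome elaboration of what the paper leaves implicit.
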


Note, that the asymptotic relation given by \eqref{SP8.4} is not
expressed via $\mathsf{E}\varsigma$ and, therefore, it is invariant
and hence the same as for the queueing system with ordinary Poisson
arrivals.

\subsection{Asymptotic analysis of the stationary probabilities $q_i$: The case $\rho_1=1+\delta$, $\delta>0$.}
\label{Case 2} In the case $\rho_1=1+\delta$, $\delta>0$ the
asymptotic behaviour of $q_i$ is specified by the following theorem.

\begin{thm}\label{thm1}
Assume that $\rho_1=1+\delta$, $\delta>0$, and $L\delta\to C>0$ as
$\delta\to 0$ and $L\to\infty$. Assume that $\rho_{1,3}(L)$ is a
bounded sequence, assume that $\mathrm{E}\varsigma^{3}<\infty$ and
there exists
$\widetilde{\rho}_{1,2}=\lim\limits_{L\to\infty}\rho_{1,2}(L)$.
Then, for any $j\ge0$
\begin{equation}\label{SP9.1}
\begin{aligned}
q_{L-j}&=\frac{\exp\left(\dfrac{2C\mathsf{E}\varsigma}{\widetilde{\rho}_{1,2}(\mathsf{E}\varsigma)^{3}+\mathsf{E}\varsigma^{2}-\mathsf{E}\varsigma}\right)}
{\exp\left(\dfrac{2C\mathsf{E}\varsigma}{\widetilde{\rho}_{1,2}(\mathsf{E}\varsigma)^{3}+\mathsf{E}\varsigma^{2}-\mathsf{E}\varsigma}\right)-1}\\
&\ \ \ \times
\left(1-\frac{2\delta\mathsf{E}\varsigma}{\widetilde{\rho}_{1,2}(\mathsf{E}\varsigma)^{3}+\mathsf{E}\varsigma^{2}-\mathsf{E}\varsigma}\right)^j
\frac{2\delta\mathsf{E}\varsigma}{\widetilde{\rho}_{1,2}(\mathsf{E}\varsigma)^{3}+\mathsf{E}\varsigma^{2}-\mathsf{E}\varsigma}
+o(\delta).
\end{aligned}
\end{equation}
\end{thm}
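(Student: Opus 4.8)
The plan is to reduce \eqref{SP9.1} to the two ingredients already available plus one new heavy-traffic increment estimate. Set $D:=\widetilde\rho_{1,2}(\mathsf E\varsigma)^3+\mathsf E\varsigma^2-\mathsf E\varsigma$. By Lemma \ref{Explicit qi} --- or, keeping the power of $\mathsf E\varsigma$ visible, by \eqref{SP7.2} together with $p_1=\mathsf E\varsigma/\mathsf E\nu_L$ from \eqref{SP4.21} --- we have $q_{L-j}=\frac{\rho_1 p_1}{\mathsf E\varsigma}\bigl(\mathsf E\nu_{L-j}^{(1)}-\mathsf E\nu_{L-j-1}^{(1)}\bigr)$. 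Since $\rho_1=1+\delta\to1$ and $p_1$ is given by \eqref{SP6.1}, the whole theorem hinges on the heavy-traffic asymptotics of the increment $\mathsf E\nu_{L-j}^{(1)}-\mathsf E\nu_{L-j-1}^{(1)}$, which I would carry out in exact parallel with the passage \eqref{SP8.1}--\eqref{SP8.3} used for the case $\rho_1=1$, the only difference being that the increment is no longer asymptotically constant but carries a geometric factor.

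First I would obtain the increment for the $1$-busy-period counts. The fixed-$\rho_1>1$ relation \eqref{SP5.5} shows that $\mathsf E\widetilde\nu_m^{(1)}$ equals $\varphi^{-m}/[1+\lambda\widehat B_1'(\lambda-\lambda\widehat R(\varphi))\widehat R'(\varphi)]$ plus the constant $1/(1-\rho_1)$ (up to a vanishing remainder); differencing in $m$ cancels the constant and gives $\mathsf E\widetilde\nu_m^{(1)}-\mathsf E\widetilde\nu_{m-1}^{(1)}\approx\varphi^{-m}(1-\varphi)/[1+\lambda\widehat B_1'\widehat R']$. Inserting the heavy-traffic expansions \eqref{SP6.3} (so $1-\varphi=\tfrac{2\delta\mathsf E\varsigma}{D}+O(\delta^2)$) and \eqref{SP6.6} (so the bracket is $\delta+O(\delta^2)$) collapses this to $\mathsf E\widetilde\nu_m^{(1)}-\mathsf E\widetilde\nu_{m-1}^{(1)}=\tfrac{2\mathsf E\varsigma}{D}\varphi^{-m}[1+o(1)]$.

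Next I would propagate this through the batch structure. Using the conditional representation \eqref{SP4.8} and Corollary \ref{Asymp estimate} to replace $\mathsf E\nu^{(1)}$ by $\mathsf{EE}\{\nu^{(1)}|\varsigma_1\wedge L\}$, the increment becomes $\sum_i\mathsf{Pr}\{\varsigma_1\wedge L=i\}\sum_{k=1}^i\bigl(\mathsf E\widetilde\nu_{L-j-k+1}^{(1)}-\mathsf E\widetilde\nu_{L-j-k}^{(1)}\bigr)$. Factoring $\varphi^{-(L-j-k+1)}=\varphi^{-L}\varphi^{j}\varphi^{k-1}$, the term $\varphi^{-L}\to\exp(2C\mathsf E\varsigma/D)$ by $L\delta\to C$, the power $\varphi^{j}=(1-\tfrac{2\delta\mathsf E\varsigma}{D})^{j}$ is retained verbatim, and summing the inner geometric series against the batch law via $\sum_i\mathsf{Pr}\{\varsigma_1\wedge L=i\}\tfrac{1-\varphi^i}{1-\varphi}\to\tfrac{1-\widehat R(\varphi)}{1-\varphi}=\mathsf E\varsigma[1+o(1)]$ (this is exactly \eqref{SP6.7}) supplies one further factor $\mathsf E\varsigma$. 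This gives $\mathsf E\nu_{L-j}^{(1)}-\mathsf E\nu_{L-j-1}^{(1)}=\tfrac{2(\mathsf E\varsigma)^2}{D}\exp(2C\mathsf E\varsigma/D)\,\varphi^{j}[1+o(1)]$. Substituting this and \eqref{SP6.1} into the representation above, the factor $\mathsf E\varsigma$ coming from $p_1=\mathsf E\varsigma/\mathsf E\nu_L$ cancels one power, and letting $\rho_1\to1$ yields \eqref{SP9.1}; the remainder is $o(\delta)$ because $q_{L-j}=O(\delta)$ and each ingredient carries only a relative $o(1)$.

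The hard part is the uniformity underlying the first two steps. Relation \eqref{SP5.5} is stated for a fixed $\rho_1>1$ with an unquantified $o(1)$ remainder, whereas here $\rho_1=1+\delta$ drifts to $1$ and $\varphi\uparrow1$ as $L\to\infty$, and I need information of relative order $\delta$ to survive after multiplication by the $O(1)$ factor $\varphi^{-L}$. Making the differencing legitimate therefore requires re-deriving the increment directly from the generating function \eqref{SP5.2} with an explicit remainder --- a Postnikov-type estimate uniform in the drifting parameter --- rather than quoting \eqref{SP5.5} as a black box, and it is here that the hypotheses $\mathsf E\varsigma^3<\infty$ and boundedness of $\rho_{1,3}(L)$ are consumed (they also guarantee the truncation corrections $\mathsf{Pr}\{\varsigma_1>L-j\}$ are $o(\delta)$). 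Controlling the interchange of the $L\to\infty$ limit with the summation over the batch distribution in the geometric step is the remaining technical point.
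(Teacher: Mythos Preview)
Your argument is essentially the paper's proof. The paper proceeds exactly as you outline: it differences \eqref{SP5.5} to obtain the analogue of your $\widetilde\nu$-increment (their \eqref{SP9.2}--\eqref{SP9.3}), passes through the batch structure via \eqref{SP4.8} and Corollary \ref{Asymp estimate} as in \eqref{SP5.14} to reach the $\nu$-increment \eqref{SP9.5}, inserts the expansions \eqref{SP6.3}, \eqref{SP6.6}, \eqref{SP6.7}, and then multiplies by $\rho_1 p_1$ using \eqref{SP6.1} and Lemma \ref{Explicit qi}. Your bookkeeping of the $\mathsf E\varsigma$ powers via \eqref{SP7.2} and \eqref{SP4.21} is in fact cleaner than the paper's displayed intermediate formulae, which contain a couple of minor slips (a missing $\varphi^L$ and a stray power of $\mathsf E\varsigma$) that cancel by the end.

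One genuine difference: the paper simply invokes \eqref{SP5.5} for the drifting parameter $\rho_1=1+\delta(L)$ without discussing the uniformity issue you raise in your last paragraph. So the concern you identify is real, but the paper does not treat it; it applies the fixed-$\rho_1$ Tak\'acs asymptotic as a black box and proceeds formally. Your proposal is therefore at least as rigorous as the published argument.
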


\begin{proof} Expanding \eqref{SP5.5} for large $L$, we have:
\begin{equation}\label{SP9.2}
\mathsf{E}\widetilde\nu_{L-j}^{(1)}=\frac{\varphi^j}{\varphi^L[1+\lambda
\widehat{B}_1^\prime(\lambda-\lambda\widehat{R}(\varphi))\widehat{R}^\prime(\varphi)]}+\frac{1}{1-\rho_1}+o(1).
\end{equation}
In turn, from \eqref{SP9.2} for large $L$ we obtain:
\begin{equation}\label{SP9.3}
\mathsf{E}\widetilde\nu_{L-j}^{(1)}-\mathsf{E}\widetilde\nu_{L-j-1}^{(1)}=\frac{(1-\varphi)\varphi^j}
{\varphi^L[1+\lambda
\widehat{B}_1^\prime(\lambda-\lambda\widehat{R}(\varphi))\widehat{R}^\prime(\varphi)]}+o(1).
\end{equation}
From \eqref{SP9.3}, similarly to \eqref{SP5.14}, we further have:
\begin{equation*}
\begin{aligned}
&\mathsf{EE}\{\nu_{L-j}^{(1)}|\varsigma_1\wedge L\}-\mathsf{EE}\{\nu_{L-j-1}^{(1)}|\varsigma_1\wedge L\}\\
&=\frac{(1-\widehat{R}(\varphi))(1-\varphi)\varphi^j}{[1+\lambda\widehat{B}_1^\prime(\lambda-\lambda\widehat{R}(\varphi))
\widehat{R}^\prime(\varphi)](1-\varphi)}+o(1),
\end{aligned}
\end{equation*}
and, according to \eqref{SP4.17} (Corollary \ref{Asymp estimate}),
\begin{equation}\label{SP9.5}
\begin{aligned}
&\mathsf{E}\nu_{L-j}^{(1)}-\mathsf{E}\nu_{L-j-1}^{(1)}
=\frac{(1-\widehat{R}(\varphi))(1-\varphi)\varphi^j}{[1+\lambda\widehat{B}_1^\prime(\lambda-\lambda\widehat{R}(\varphi))
\widehat{R}^\prime(\varphi)](1-\varphi)}+o(1).
\end{aligned}
\end{equation}

Next, under the conditions of the theorem, asymptotic expansions \eqref{SP6.3} \eqref{SP6.6} and \eqref{SP6.7} hold.
Taking into consideration these expansions we arrive at the following asymptotic relations
for $j=0,1,\ldots$:
 \begin{equation*}\label{SP9.7}
 \begin{aligned}
 \mathsf{E}\nu_{L-j}^{(1)}-\mathsf{E}\nu_{L-j-1}^{(1)}&=
 \exp\left(\frac{2C\mathsf{E}\varsigma}{\widetilde{\rho}_{1,2}(\mathsf{E}\varsigma)^{3}+\mathsf{E}\varsigma^{2}-\mathsf{E}\varsigma}\right)\\
& \ \ \ \times
 \left(1-\frac{2\delta\mathsf{E}\varsigma}{\widetilde{\rho}_{1,2}(\mathsf{E}\varsigma)^{3}+\mathsf{E}\varsigma^{2}-\mathsf{E}\varsigma}\right)^j
 \frac{2\delta\mathsf{E}\varsigma}{\widetilde{\rho}_{1,2}(\mathsf{E}\varsigma)^{3}+\mathsf{E}\varsigma^{2}-\mathsf{E}\varsigma}[1+o(1)].
 \end{aligned}
 \end{equation*}
 Now, taking into account asymptotic relation \eqref{SP6.1} of Theorem \ref{i1} and the explicit formula given by \eqref{SP7.1} (Lemma \ref{Explicit qi}) we arrive at the statement of the theorem.
\end{proof}

\subsection{Asymptotic analysis of the stationary probabilities $q_i$: The case $\rho_1=1-\delta$,
$\delta>0$.}\label{Case 3} In the case $\rho_1=1-\delta$,
$\delta>0$, the study is more delicate and based on special
analysis. The additional assumption of this case is that the class
of probability distribution functions $\{B_1(x)\}$ and
$\mathrm{Pr}\{\varsigma=i\}$ are given such that there exists a
unique root $\tau>1$ of the equation
\begin{equation}\label{SP10.1}
z=\widehat{B}_1(\lambda-\lambda \widehat{R}(z)),
\end{equation}
and there exists the first derivative $\widehat{B}_1^\prime(\lambda-\lambda \widehat{R}(\tau))$.

Under the assumption that $\rho_1<1$ the unique root of
\eqref{SP10.1} is not necessarily exists. Such type of condition has
been considered by Willmot \cite{Willmot 1988} to obtain the
asymptotic behavior for high queue-level probabilities in stationary
$M/GI/1$ queues. Denote the stationary probabilities in the $M/GI/1$
queueing system by $q_i[M/GI/1]$, $i=0,1,\ldots$. It was shown in
\cite{Willmot 1988} that
\begin{equation}\label{SP10.2}
q_i[M/GI/1]=\frac{(1-\rho_1)(1-\tau)}{\tau^i[1+\lambda\widehat{B}_1^\prime(\lambda-\lambda\tau)]}[1+o(1)] \ \mbox{as}
\ i\to\infty,
\end{equation}
where $\widehat{B}_1(s)$ denotes the Laplace-Stieltjes transform of the service time distribution in the $M/G/1$ queueing system, and $\tau$ denotes a root of the equation $z=\widehat{B}_1(\lambda-\lambda z)$ greater than 1, which is assumed to be unique.
On the other hand, according to the Pollaczek-Khintchine formula (e.g. Tak\'acs \cite{Takacs 1962}, p.242), $q_i[M/GI/1]$ can be represented explicitly
\begin{equation}\label{SP10.3}
q_i[M/GI/1]=(1-\rho_1)\left(\mathsf{E}\nu_i^{(1)}-\mathsf{E}\nu_{i-1}^{(1)}\right),
i=1,2,\ldots,
\end{equation}
where the random variable $\nu_i^{(1)}$ in this formula is
associated with the number of served customers during a busy period
of the state dependent $M/G/1$ queueing system, where the value of
the system parameter, where the service is changed, is $i$ (see
Section \ref{MG1}). Representation \eqref{SP10.3} can be easily
checked, since in this case
\begin{equation}\label{SP10.4}
\sum_{j=0}^\infty\mathsf{E}\nu_j^{(1)}z^j=\frac{\widehat{B}_1(\lambda-\lambda
z)}{\widehat{B}_1(\lambda-\lambda z)-z},
\end{equation}
and multiplication of the right-hand side of \eqref{SP10.4} by
$(1-\rho_1)(1-z)$ leads to the well-known Pollaczek-Khintchine
formula. Then, from \eqref{SP10.2} and \eqref{SP10.3} there is the
asymptotic proportion for large $L$ and any $j\geq0$:
\begin{equation}\label{SP10.5}
\frac{\mathsf{E}\nu_{L-j}^{(1)}-\mathsf{E}\nu_{L-j-1}^{(1)}}{\mathsf{E}\nu_{L}^{(1)}-\mathsf{E}\nu_{L-1}^{(1)}}
=\tau^j[1+o(1)].
\end{equation}

In the case of batch arrivals the results are similar. One can prove
that the same proportion as \eqref{SP10.5} holds in this case as
well, where $\tau$ in the case of batch arrivals denotes a unique
real root of the equation of \eqref{SP10.1}, which is greater than
1. (Recall that our convention is an existence of a unique real
solution of \eqref{SP10.1} greater than 1.) Indeed, the arguments of
\cite{Willmot 1988} are elementary extended for the queueing system
with batch arrivals. The simplest way to extend these results
straightforwardly is to consider the stationary queueing system with
batch Poisson arrivals, in which the first batch in each busy period
is equal to 1. Denote this system by $M^{1,X}/G/1$. For this
specific system, similarly to \eqref{SP10.2} we obtain:
\begin{equation}\label{SP10.6}
q_i[M^{1,X}/GI/1]=
\frac{(1-\rho_1)(1-\tau)}{\tau^i[1+\lambda\widehat{B}_1^\prime(\lambda-\lambda
\widehat{R}(\tau))\widehat{R}^\prime(\tau)]}[1+o(1)] \ \mbox{as} \
i\to\infty,
\end{equation}
where $q_i[M^{1,X}/GI/1]$, $i=0,1,\ldots$,  denotes the stationary
probabilities in this system. Then, taking into account
\eqref{SP5.2}, similarly to \eqref{SP10.3} one can write
\begin{equation}\label{SP10.7}
q_i[M^{1,X}/GI/1]=(1-\rho_1)\left(\mathsf{E}\widetilde\nu_i^{(1)}-\mathsf{E}\widetilde\nu_{i-1}^{(1)}\right),
\ i=1,2,\ldots.
\end{equation}
From \eqref{SP10.6} and \eqref{SP10.7} we obtain
\begin{equation}\label{SP10.8}
\frac{\mathsf{E}\widetilde\nu_{L-j}^{(1)}-\mathsf{E}\widetilde\nu_{L-j-1}^{(1)}}
{\mathsf{E}\widetilde\nu_{L}^{(1)}-\mathsf{E}\widetilde\nu_{L-1}^{(1)}}
=\tau^j[1+o(1)].
\end{equation}
From \eqref{SP10.8} and the results of Sections \ref{Preliminary asymp} and \ref{Final asymp} (see
\eqref{SP5.3}, \eqref{SP5.12} and \eqref{SP4.17}) we also have
the estimate
\begin{equation}\label{SP10.9}
\frac{\mathsf{E}\nu_{L-j}^{(1)}-\mathsf{E}\nu_{L-j-1}^{(1)}}
{\mathsf{E}\nu_{L}^{(1)}-\mathsf{E}\nu_{L-1}^{(1)}} =\tau^j[1+o(1)],
\end{equation}
which coincides with \eqref{SP10.5}.

\smallskip
Now we formulate and prove a theorem on asymptotic behavior of the stationary probabilities $q_i$ in the case $\rho_1=1-\delta$, $\delta>0$. The special assumption in this theorem is that the class of probability distributions $\{B_1(x)\}$ is defined according to the above convention. More precisely, in the case $\rho_1=1-\delta$, $\delta>0$, and vanishing $\delta$ as $L\to\infty$ this means that there exists $\epsilon_0>0$ (small enough) such that for all $0\leq\epsilon\leq\epsilon_0$, the above family of probability distribution functions $B_{1,\epsilon}(x)$ (depending now on the parameter $\epsilon$) satisfies the following properties. Let $\widehat{B}_{1,\epsilon}(s)$ denote the Laplace-Stieltjes transform of $B_{1,\epsilon}(x)$. We assume that any $\widehat{B}_{1,\epsilon}(s)$ is an analytic function in a small neighborhood of zero, and
\begin{equation}\label{SP10.10}
\widehat{B}_{1,\epsilon}^\prime(s)<\infty.
\end{equation}
Property \eqref{SP10.10} is required for the existence of the probabilities $q_i$. Relation \eqref{SP10.6} contains the term $\widehat{B}_1^\prime(\lambda-\lambda
\widehat{R}(\tau))$, and this term must be finite. In addition, the term
$
\widehat{R}^\prime(\tau)<\infty
$
must be finite
 as well, that is, the additional to \eqref{SP10.10} associated assumption is that
 \begin{equation}\label{SP10.11}
 \widehat{R}^\prime(1+\epsilon)<\infty
\end{equation}
for any $\epsilon$ of the defined neighborhood. Choice of small parameter $\epsilon$ is continuously connected with that choice of the parameter $\delta$ (or $L$) in the theorem below.

\begin{thm}\label{thm4} Assume that the class of probability distribution functions
$\{B_1(x)\}$ and the probabilities $r_1$, $r_2$, \ldots are defined
according to the conventions made and, respectively, satisfy
\eqref{SP10.10} and \eqref{SP10.11}, $\rho_1=1-\delta$, $\delta>0$,
and $L\delta\to C>0$, as $\delta\to0$ and $L\to\infty$. Assume that
$\rho_{1,3}=\rho_{1,3}(L)$ is a bounded sequence,
$\mathrm{E}\varsigma^{3}<\infty$, and there exists
$\widetilde\rho_{1,2}=\lim\limits_{L\to\infty}\rho_{1,2}(L)$. Then,
\begin{equation}\label{SP10.12}
\begin{aligned}
q_{L-j}&=\frac{1}{\exp\left(\dfrac{2C\mathsf{E}\varsigma}{\widetilde{\rho}_{1,2}(\mathsf{E}\varsigma)^{3}+\mathsf{E}\varsigma^{2}-\mathsf{E}\varsigma}\right)-1}\\
&\ \ \ \times
\frac{2\delta\mathsf{E}\varsigma}{\widetilde{\rho}_{1,2}(\mathsf{E}\varsigma)^{3}+\mathsf{E}\varsigma^{2}-\mathsf{E}\varsigma}
\left(1+\frac{2\delta\mathsf{E}\varsigma}{\widetilde{\rho}_{1,2}(\mathsf{E}\varsigma)^{3}+\mathsf{E}\varsigma^{2}-\mathsf{E}\varsigma}\right)^j[1+o(1)]
\end{aligned}
\end{equation}
for any $j\geq0.$
\end{thm}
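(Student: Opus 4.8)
The plan is to combine the explicit formula of Lemma \ref{Explicit qi} with the geometric proportion \eqref{SP10.9}, which together reduce the whole statement to the single base probability $q_L$ and the expansion of the root $\tau$ near $1$. Indeed, by \eqref{SP7.1} we have $q_{L-j}=\rho_1 p_1\big(\mathsf{E}\nu_{L-j}^{(1)}-\mathsf{E}\nu_{L-j-1}^{(1)}\big)$, and \eqref{SP10.9} gives $\mathsf{E}\nu_{L-j}^{(1)}-\mathsf{E}\nu_{L-j-1}^{(1)}=\big(\mathsf{E}\nu_{L}^{(1)}-\mathsf{E}\nu_{L-1}^{(1)}\big)\tau^{j}[1+o(1)]$; hence at once $q_{L-j}=q_L\,\tau^{j}[1+o(1)]$ for each fixed $j\ge0$. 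So it remains to (a) expand $\tau^{j}$ and (b) identify the prefactor $q_L$.

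For (a), I would expand the root $\tau>1$ of \eqref{SP10.1} about $z=1$ exactly as in \eqref{SP6.3}--\eqref{SP6.5}, but now selecting the root on the other side of $1$ with $\rho_1=1-\delta$. Writing $\tau=1+w$ and Taylor-expanding $\widehat{B}_1(\lambda-\lambda\widehat{R}(\tau))$ gives a quadratic equation of the same form as \eqref{SP6.5}, so that, with $\kappa=2\mathsf{E}\varsigma/[\widetilde\rho_{1,2}(\mathsf{E}\varsigma)^{3}+\mathsf{E}\varsigma^{2}-\mathsf{E}\varsigma]$, the positive solution is $w=\delta\kappa+O(\delta^{2})$. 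Thus $\tau^{j}=(1+\delta\kappa)^{j}[1+o(1)]$ for fixed $j$, while $\tau^{L}\to\exp(C\kappa)$ because $wL\to C\kappa$. This already produces the factor $\big(1+\tfrac{2\delta\mathsf{E}\varsigma}{\widetilde\rho_{1,2}(\mathsf{E}\varsigma)^{3}+\mathsf{E}\varsigma^{2}-\mathsf{E}\varsigma}\big)^{j}$ of the claimed formula.

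For (b), the route matching the earlier theorems is $q_L=\rho_1 p_1\big(\mathsf{E}\nu_L^{(1)}-\mathsf{E}\nu_{L-1}^{(1)}\big)$, using $p_1$ from Theorem \ref{i3} and the $j=0$ increment obtained by feeding the Willmot-type tail \eqref{SP10.6} through the same batch-conditioning step that leads from \eqref{SP9.3} to \eqref{SP9.5}: summing over $\varsigma_1\wedge L$ converts the factor $1-\tau$ into $1-\widehat{R}(\tau)$, and with $1-\widehat{R}(\tau)=\mathsf{E}\varsigma\,(1-\tau)[1+o(1)]$, $\tau^{L}\to e^{C\kappa}$, and $1+\lambda\widehat{B}_1'(\lambda-\lambda\widehat{R}(\tau))\widehat{R}'(\tau)=-\delta[1+o(1)]$ the constant feeding into $q_L$ is pinned down. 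A cleaner way to obtain the prefactor directly is the normalization $p_1+p_2+\sum_{i=1}^{L}q_i=1$: since Theorem \ref{i3} gives $p_1,p_2=o(1)$, one has $q_L\frac{\tau^{L}-1}{\tau-1}=1+o(1)$, whence $q_L=\frac{\tau-1}{\tau^{L}-1}[1+o(1)]=\frac{\delta\kappa}{e^{C\kappa}-1}[1+o(1)]$, which is exactly the prefactor in \eqref{SP10.12}. Multiplying by $\tau^{j}$ from step (a) yields the statement.

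The hardest part is justifying the proportion \eqref{SP10.9} itself, that is, the extension of Willmot's single-class geometric tail to the batch, state-dependent system; this is where the standing conventions do the work. One must know that \eqref{SP10.1} has a \emph{unique} root $\tau>1$, and the finiteness assumptions \eqref{SP10.10}--\eqref{SP10.11} are precisely what guarantee that $\widehat{B}_1'(\lambda-\lambda\widehat{R}(\tau))$ and $\widehat{R}'(\tau)$, and hence $\widehat{R}(\tau)$, remain finite at the relevant $\tau=1+\delta\kappa>1$, so that \eqref{SP10.6}--\eqref{SP10.8} and their transfer to $\mathsf{E}\nu^{(1)}$ are meaningful. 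A secondary subtlety, if the normalization route is used for $q_L$, is uniformity: \eqref{SP10.9} is an $i\to\infty$ statement, so the finitely many small-index terms are not controlled by it; but since $\tau^{j}$ stays in the bounded range $[1,e^{C\kappa}]$ no single term dominates, and the $o(L)$ small-index terms contribute only $o(1)$ to $\sum_i q_i$, so the near-geometric approximation of the sum is legitimate.
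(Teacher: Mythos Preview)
Your proposal is correct and follows essentially the same route as the paper's own proof: expand the root $\tau$ of \eqref{SP10.1} about $1$ by Taylor expansion to obtain $\tau=1+\delta\kappa+O(\delta^{2})$, combine \eqref{SP7.1} with the proportion \eqref{SP10.9} to get $q_{L-j}=q_L\tau^{j}[1+o(1)]$, and then determine $q_L$ via the normalization $p_1+p_2+\sum_i q_i=1$ together with $p_1,p_2=O(\delta)$. Your additional remarks on the uniformity issue in summing the near-geometric tail go slightly beyond what the paper spells out, but the core argument is the same.
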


\begin{proof}
Under the assumptions of this theorem let us first derive the following asymptotic expansion:
\begin{equation}\label{SP10.13}
\tau=1+\frac{2\delta\mathsf{E}\varsigma}{\widetilde{\rho}_{1,2}(\mathsf{E}\varsigma)^{3}+\mathsf{E}\varsigma^{2}-\mathsf{E}\varsigma}+O(\delta^{2}).
\end{equation}
Asymptotic expansion \eqref{SP10.13} is similar to that of \eqref{SP6.3}, and its proof is also similar. Namely, taking into account that the equation $z=\widehat{B}_1(\lambda-\lambda\widehat{R}(z))$  has a unique solution in the set $(1,\infty)$, and this solution approaches 1 as $\delta$ vanishes. Therefore, by the Taylor expansion of this equation around the point $z=1$, we have:
\begin{equation}\label{SP10.14}
1+z=1+(1-\delta)z+\frac{\widetilde{\rho}_{1,2}(\mathsf{E}\varsigma)^{3}+(1-\delta)(\mathsf{E}\varsigma^{2}-\mathsf{E}\varsigma)}
{2\mathsf{E}\varsigma}z^{2}+O(z^{3}).
\end{equation}
Disregarding the term $O(z^{3})$, from \eqref{SP10.14} we arrive at
the quadratic equation
$$
\delta
z-\frac{\widetilde{\rho}_{1,2}(\mathsf{E}\varsigma)^{3}+(1-\delta)(\mathsf{E}\varsigma^{2}-\mathsf{E}\varsigma)}
{2\mathsf{E}\varsigma}z^{2}=0,
$$
and obtain a positive solution
$$z=\frac{2\delta\mathsf{E}\varsigma}{\widetilde\rho_{1,2}(\mathsf{E}\varsigma)^{3}+(1+\delta)\left({\mathsf{E}\varsigma^{2}}-
{\mathsf{E}\varsigma}\right)}.$$
This proves \eqref{SP10.13}.

Next, from \eqref{SP10.9}, \eqref{SP10.13} and explicit formula \eqref{SP7.1} we obtain
\begin{equation}\label{SP10.15}
q_{L-j}=q_L\left(1+\frac{2\delta\mathsf{E}\varsigma}{\widetilde{\rho}_{1,2}(\mathsf{E}\varsigma)^{3}+\mathsf{E}\varsigma^{2}-\mathsf{E}\varsigma}\right)^j[1+o(1)].
\end{equation}
Taking into consideration
\begin{equation*}\label{SP10.16}
\begin{aligned}
&\sum_{j=0}^{L-1}\left(1+\frac{2\delta\mathsf{E}\varsigma}{\widetilde{\rho}_{1,2}(\mathsf{E}\varsigma)^{3}+\mathsf{E}\varsigma^{2}-\mathsf{E}\varsigma}\right)^j\\
&=\frac{\widetilde{\rho}_{1,2}(\mathsf{E}\varsigma)^{3}+\mathsf{E}\varsigma^{2}-\mathsf{E}\varsigma}{2\delta\mathsf{E}\varsigma}\left[\left(1+
\frac{2\delta\mathsf{E}\varsigma}{\widetilde{\rho}_{1,2}(\mathsf{E}\varsigma)^{3}+\mathsf{E}\varsigma^{2}-\mathsf{E}\varsigma}\right)^L-1\right]\\
&=\frac{\widetilde{\rho}_{1,2}(\mathsf{E}\varsigma)^{3}+\mathsf{E}\varsigma^{2}-\mathsf{E}\varsigma}{2\delta\mathsf{E}\varsigma}\left[\exp
\left(\frac{2C\mathsf{E}\varsigma}{\widetilde{\rho}_{1,2}(\mathsf{E}\varsigma)^{3}+\mathsf{E}\varsigma^{2}-\mathsf{E}\varsigma}\right)-1\right][1+o(1)],
\end{aligned}
\end{equation*}
from the normalization condition $p_1+p_2+\sum\limits_{i=1}^Lq_i=1$ and the fact that both $p_1$ and $p_2$ have the order $O(\delta)$, we obtain:
\begin{equation}\label{SP10.17}
q_L=\frac{2\delta\mathsf{E}\varsigma}{\widetilde{\rho}_{1,2}(\mathsf{E}\varsigma)^{3}+\mathsf{E}\varsigma^{2}-\mathsf{E}\varsigma}
\cdot\frac{1}{ \exp
\left(\dfrac{2C\mathsf{E}\varsigma}{\widetilde{\rho}_{1,2}(\mathsf{E}\varsigma)^{3}+\mathsf{E}\varsigma^{2}-\mathsf{E}\varsigma}\right)-1}[1+o(1)].
\end{equation}
The desired statement of the theorem follows from \eqref{SP10.17}.
\end{proof}

\section{Objective function}\label{ObFunction}

\subsection{The case $\rho_1=1$.}\label{Case 1O} In this section we prove the following result.

\begin{prop}\label{prop1}
In the case $\rho_1=1$, under the additional conditions $\rho_{1,2}<\infty$ and $\mathrm{E}\varsigma^{2}<\infty$ we have:
\begin{equation}\label{SP11.1}
\lim_{L\to\infty}J(L)=j_1\frac{{\rho}_{1,2}(\mathsf{E}\varsigma)^{3}+\mathsf{E}\varsigma^{2}-\mathsf{E}\varsigma}{2\mathsf{E}\varsigma}+j_2\frac{\rho_2}{1-\rho_2}\cdot
\frac{{\rho}_{1,2}(\mathsf{E}\varsigma)^{3}+\mathsf{E}\varsigma^{2}-\mathsf{E}\varsigma}{2\mathsf{E}\varsigma}+c^{*},
\end{equation}
where
\begin{equation*}\label{SP11.2}
c^{*}=\lim_{L\to\infty}\frac{1}{L}\sum_{i=1}^L c_i.
\end{equation*}
\end{prop}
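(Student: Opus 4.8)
The plan is to combine the asymptotic results for $p_1$, $p_2$ (Theorem \ref{lem3}'s companion, namely \eqref{SP5.8} and \eqref{SP5.9} of Theorem \ref{M asymp}) with the asymptotic behaviour of the stationary probabilities $q_{L-j}$ from Theorem \ref{lem3}, and then to pass to the limit in the three separate pieces of the objective function \eqref{I1}. First I would recall that in the case $\rho_1=1$ the damage terms $p_1J_1=j_1Lp_1$ and $p_2J_2=j_2Lp_2$ each converge, since $J_k=j_kL$ and $Lp_k$ has a finite limit by \eqref{SP5.8}--\eqref{SP5.9}. Concretely,
\begin{equation*}
\lim_{L\to\infty}p_1J_1=j_1\lim_{L\to\infty}Lp_1
=j_1\frac{\rho_{1,2}(\mathsf{E}\varsigma)^{3}+\mathsf{E}\varsigma^{2}-\mathsf{E}\varsigma}{2\mathsf{E}\varsigma},
\end{equation*}
and likewise $\lim_{L\to\infty}p_2J_2=j_2\frac{\rho_2}{1-\rho_2}\frac{\rho_{1,2}(\mathsf{E}\varsigma)^{3}+\mathsf{E}\varsigma^{2}-\mathsf{E}\varsigma}{2\mathsf{E}\varsigma}$. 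These two limits already account for the first two summands of \eqref{SP11.1}, so the substance of the proof is the third summand, namely showing that the water-cost term converges to $c^{*}$.

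The key step is therefore to analyse $\sum_{i=1}^{L}c_iq_i$ (note the index shift: with $L^{\mathrm{lower}}=0$ the sum in \eqref{I1} runs over $i=1,\dots,L$). I would rewrite this sum in the reversed index $j=L-i$, so that it becomes $\sum_{j=0}^{L-1}c_{L-j}q_{L-j}$, and then invoke Theorem \ref{lem3}, which gives $Lq_{L-j}\to1$ uniformly for each fixed $j\ge0$. The heuristic is that $q_{L-j}\approx 1/L$ across the bulk of the levels, so that $\sum_i c_iq_i$ behaves like the Cesàro average $\frac{1}{L}\sum_{i=1}^L c_i$, which by hypothesis converges to $c^{*}$. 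To make this rigorous I would exploit the monotonicity of $c_i$: since $c_i$ is non-increasing and $q_i\ge0$ with $\sum_i q_i = 1-p_1-p_2\to1$, one can sandwich the weighted sum between averages of the cost sequence. A clean way is to use Abel summation (summation by parts) together with the fact that the partial sums $\sum_{k\le m}q_{L-k}$ concentrate their mass like $m/L$; alternatively, one bounds $\sum_i c_i q_i$ above and below by comparing the probability weights $q_i$ with the uniform weights $1/L$ and controlling the discrepancy via the monotone arrangement, appealing to the majorization/rearrangement machinery advertised in the introduction (\cite{Hardy Littlewood Polya 1952}, \cite{Marschall Olkin 1979}).

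The main obstacle I anticipate is that Theorem \ref{lem3} only supplies the limit $Lq_{L-j}\to1$ for each \emph{fixed} $j$, whereas the cost sum requires control of $q_{L-j}$ \emph{uniformly} over the full range $0\le j\le L-1$, including $j$ comparable to $L$. The pointwise limit alone does not license interchanging the limit with the infinite-in-the-limit summation, so one cannot simply replace each $q_{L-j}$ by $1/L$ term by term. To close this gap I would use the total-mass constraint $\sum_{i=1}^L q_i=1-p_1-p_2\to1$ as a global control: it forces the tail contributions to be negligible and, combined with the monotonicity of $c_i$ and the boundedness of the cost sequence (the values $c_1$ and $c_L$ are fixed and strictly positive, with $c_L$ bounded away from $0$ in the limit), allows a dominated-convergence or squeeze argument. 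The boundedness of $c_i$ between $c_L$ and $c_1$ ensures $c_L\bigl(1-p_1-p_2\bigr)\le\sum_i c_iq_i\le c_1\bigl(1-p_1-p_2\bigr)$ as a crude first bound, and a finer monotone comparison upgrades this to the exact Cesàro limit $c^{*}$. Assembling the three limits then yields \eqref{SP11.1}, completing the proof.
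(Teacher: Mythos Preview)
Your overall plan matches the paper's: the first two summands come directly from \eqref{SP5.8}--\eqref{SP5.9}, and the paper, like you, derives the last summand $c^{*}$ by invoking Theorem \ref{lem3}. The paper's proof is in fact even terser than yours --- it simply asserts $\lim_{L\to\infty}\sum_{i}c_iq_i=\lim_{L\to\infty}\frac{1}{L}\sum_i c_i$ --- so you are right to flag the uniformity issue that the paper glosses over.

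However, your proposed repair does not close the gap. The combination ``pointwise limit $Lq_{L-j}\to1$ for each fixed $j$, total mass $\sum_i q_i\to1$, and $c_i$ non-increasing and bounded'' is \emph{not} sufficient to force $\sum_i c_iq_i\to c^{*}$. A concrete obstruction: take $q_i=0$ for $i\le L/2$, $q_i=2/L$ for $L/2<i\le L-\log L$, and $q_i=1/L$ for $i>L-\log L$. Then $\sum_i q_i=1-(\log L)/L\to1$, and for each fixed $j$ eventually $L-j>L-\log L$ so $Lq_{L-j}=1$; yet with the step cost $c_i=\overline{c}$ for $i\le L/2$ and $c_i=\underline{c}$ for $i>L/2$ one gets $\sum_i c_iq_i\to\underline{c}\neq(\overline{c}+\underline{c})/2=c^{*}$. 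Neither Abel summation nor the majorization inequalities rescue this, because both require control of the partial sums $\sum_{k\le m}q_k$ for $m$ throughout the range $1,\dots,L$, and the diagonal statement of Theorem \ref{lem3} gives no such control.

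The fix is to bypass Theorem \ref{lem3} and work instead from the explicit formula of Lemma \ref{Explicit qi}, $q_i=\rho_1p_1\bigl(\mathsf{E}\nu_i^{(1)}-\mathsf{E}\nu_{i-1}^{(1)}\bigr)$. The crucial structural point is that the increments $\Delta_i=\mathsf{E}\nu_i^{(1)}-\mathsf{E}\nu_{i-1}^{(1)}$ form a single sequence in $i$ that does \emph{not} depend on $L$, and by \eqref{SP8.3} this sequence converges (as $i\to\infty$) to the constant $\kappa=2(\mathsf{E}\varsigma)^2/\bigl(\rho_{1,2}(\mathsf{E}\varsigma)^3+\mathsf{E}\varsigma^2-\mathsf{E}\varsigma\bigr)$. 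Hence $\Delta_i=\kappa+\varepsilon_i$ with $\varepsilon_i\to0$ uniformly in $L$ (because there is no $L$-dependence), and then
\[
\sum_{i=1}^L c_iq_i=(Lp_1)\Bigl[\kappa\cdot\frac{1}{L}\sum_{i=1}^L c_i+\frac{1}{L}\sum_{i=1}^L c_i\varepsilon_i\Bigr].
\]
The second bracket term is $o(1)$ by the boundedness of $c_i$ and a standard $\varepsilon$--$N$ split on $\varepsilon_i$; combined with $Lp_1\to1/\kappa$ from \eqref{SP5.8} this yields $\sum_i c_iq_i\to c^{*}$. This is presumably what the paper has in mind, and it is the missing ingredient in your argument.
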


\begin{proof}
The first two terms in the right-hand side of \eqref{SP11.1} follow from asymptotic relations \eqref{SP5.8} and \eqref{SP5.9} (Theorem \ref{M asymp}). The last term $c^*$ of the right-hand side of \eqref{SP11.1} follows from \eqref{SP8.4} (Theorem \ref{lem3}), since
\begin{equation*}\label{SP11.3}
\lim_{L\to\infty}\sum_{i=1}^Lq_ic_i=\lim_{L\to\infty}\frac{1}{L}\sum_{i=1}^L c_i=c^*.
\end{equation*}
\end{proof}

\subsection{The case $\rho=1+\delta$, $\delta>0$.}\label{Case 2O}
 In the case $\rho=1+\delta$, $\delta>0$ we have the following statement.

\begin{prop}\label{prop2} Under the assumptions of Theorem \ref{thm1}
denote the objective function $J$ by $J^{\mathrm{upper}}$. We have the
following representation:
\begin{equation}\label{SP11.4}
\begin{aligned}
J^{\mathrm{upper}}&=C\left[j_1\frac{1}{\exp\left(\dfrac{2C\mathsf{E}\varsigma}{\widetilde{\rho}_{1,2}(\mathsf{E}\varsigma)^{3}+\mathsf{E}\varsigma^{2}-\mathsf{E}\varsigma}\right)-1}\right.\\
&\ \ \
\left.+j_2\frac{\rho_2\exp\left(\dfrac{2C\mathsf{E}\varsigma}{\widetilde{\rho}_{1,2}(\mathsf{E}\varsigma)^{3}+\mathsf{E}\varsigma^{2}-\mathsf{E}\varsigma}\right)}
{(1-\rho_2)\left({\exp\left(\dfrac{2C\mathsf{E}\varsigma}{\widetilde{\rho}_{1,2}(\mathsf{E}\varsigma)^{3}+\mathsf{E}\varsigma^{2}-\mathsf{E}\varsigma}\right)-1}\right)}\right]\\
&\ \ \ +c^{\mathrm{upper}},
\end{aligned}
\end{equation}
where
\begin{equation}\label{SP11.5}
\begin{aligned}
c^{\mathrm{upper}}&=\frac{2C\mathsf{E}\varsigma}{\widetilde{\rho}_{1,2}(\mathsf{E}\varsigma)^{3}+\mathsf{E}\varsigma^{2}-\mathsf{E}\varsigma}\cdot
\frac{\exp\left(\dfrac{2C\mathsf{E}\varsigma}{\widetilde{\rho}_{1,2}(\mathsf{E}\varsigma)^{3}+\mathsf{E}\varsigma^{2}-\mathsf{E}\varsigma}\right)}
{\exp\left(\dfrac{2C\mathsf{E}\varsigma}{\widetilde{\rho}_{1,2}(\mathsf{E}\varsigma)^{3}+\mathsf{E}\varsigma^{2}-\mathsf{E}\varsigma}\right)-1}\\
&\ \ \
\times\lim_{L\to\infty}\frac{1}{L}~\widehat{C}_L\left(1-\frac{2C\mathsf{E}\varsigma}{(\widetilde{\rho}_{1,2}(\mathsf{E}\varsigma)^{3}+\mathsf{E}\varsigma^{2}-\mathsf{E}\varsigma)L}\right),
\end{aligned}
\end{equation}
and $\widehat C_L(z)=\sum\limits_{j=0}^{L-1} c_{L-j}z^j$ is a backward
generating cost function.
\end{prop}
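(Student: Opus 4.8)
The plan is to decompose the objective function into its three constituents,
\[
J = j_1 L p_1 + j_2 L p_2 + \sum_{i=1}^{L} c_i q_i,
\]
and to treat the two damage terms and the cost term separately, substituting the asymptotics for $p_1$, $p_2$ from Theorem \ref{i1} and for $q_{L-j}$ from Theorem \ref{thm1}. Throughout I write $\kappa = 2\mathsf{E}\varsigma/(\widetilde\rho_{1,2}(\mathsf{E}\varsigma)^3 + \mathsf{E}\varsigma^2 - \mathsf{E}\varsigma)$ for the constant common to all these relations, so that the exponent appearing in \eqref{SP6.1}, \eqref{SP6.2} and \eqref{SP9.1} is $C\kappa$ and the per-step factor is $\delta\kappa$.

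For the damage terms I would multiply \eqref{SP6.1} and \eqref{SP6.2} by $L$ and let $L\to\infty$, using $L\delta\to C$. Then $L p_1 = (L\delta)/(\exp(C\kappa)-1)\,[1+o(1)]\to C/(\exp(C\kappa)-1)$, and similarly $Lp_2\to C\rho_2\exp(C\kappa)/\big((1-\rho_2)(\exp(C\kappa)-1)\big)$; these reproduce exactly the two bracketed expressions multiplied by $j_1$, $j_2$ and the overall prefactor $C$ in \eqref{SP11.4}.

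For the cost term I would set $i=L-j$ and insert \eqref{SP9.1}, factoring out the $j$-independent leading coefficient $\exp(C\kappa)/(\exp(C\kappa)-1)\cdot\delta\kappa$. The residual sum $\sum_{j=0}^{L-1} c_{L-j}(1-\delta\kappa)^j$ is precisely the backward generating cost function $\widehat C_L$ evaluated at $z = 1-\delta\kappa$. Using $\delta = (C/L)[1+o(1)]$ to rewrite $\delta\kappa = (C\kappa/L)[1+o(1)]$, both the standalone factor $\delta\kappa$ and the argument $1-\delta\kappa$ acquire the limiting forms $C\kappa/L$ and $1-C\kappa/L$ respectively, so that the sum becomes $C\kappa\cdot\exp(C\kappa)/(\exp(C\kappa)-1)\cdot \tfrac1L\widehat C_L(1-C\kappa/L)\,[1+o(1)]$; passing to the limit yields $c^{\mathrm{upper}}$ as written in \eqref{SP11.5}.

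The main obstacle is controlling the accumulated error. Estimate \eqref{SP9.1} carries an $o(\delta)$ remainder for each of the $L$ values of $j$, so I must argue that their weighted sum is negligible: since $c_i$ is non-increasing and hence bounded by $c_1$, the total error $\sum_{j=0}^{L-1} c_{L-j}\,o(\delta)$ is at most $c_1\cdot L\cdot o(\delta)=c_1\,o(L\delta)=o(1)$, which requires the remainder in \eqref{SP9.1} to be uniform in $j$. A second delicate point is the passage from the true argument $1-\delta\kappa$ to the limiting argument $1-C\kappa/L$ in $\widehat C_L$: the two differ by $o(1/L)$, and one must check that, after division by $L$, this discrepancy does not alter the limit --- this rests on a crude bound for the growth of $\widehat C_L$ near $z=1$ together with the existence of $\lim_{L\to\infty}\tfrac1L\widehat C_L(1-C\kappa/L)$, which the statement assumes.
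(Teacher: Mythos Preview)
Your proposal is correct and follows essentially the same approach as the paper: derive the two damage terms from \eqref{SP6.1} and \eqref{SP6.2} via $L\delta\to C$, and obtain $c^{\mathrm{upper}}$ by inserting \eqref{SP9.1} into $\sum_{j=0}^{L-1}c_{L-j}q_{L-j}$ and recognising the backward generating function $\widehat C_L$ evaluated at $1-\delta\kappa$. The paper's own proof is in fact more terse than yours---it simply writes out the substitution and invokes $\delta L\to C$ without explicitly discussing the uniformity of the $o(\delta)$ remainder or the passage from $1-\delta\kappa$ to $1-C\kappa/L$; your explicit treatment of these two points is a welcome addition rather than a deviation.
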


\begin{proof}
The representation for the term
$$
\begin{aligned}
&C\left[j_1\frac{1}{\exp\left(\dfrac{2C\mathsf{E}\varsigma}{\widetilde{\rho}_{1,2}(\mathsf{E}\varsigma)^{3}+\mathsf{E}\varsigma^{2}-\mathsf{E}\varsigma}\right)-1}\right.\\
&\ \ \
\left.+j_2\frac{\rho_2\exp\left(\dfrac{2C\mathsf{E}\varsigma}{\widetilde{\rho}_{1,2}(\mathsf{E}\varsigma)^{3}+\mathsf{E}\varsigma^{2}-\mathsf{E}\varsigma}\right)}
{(1-\rho_2)\left({\exp\left(\dfrac{2C\mathsf{E}\varsigma}{\widetilde{\rho}_{1,2}(\mathsf{E}\varsigma)^{3}+\mathsf{E}\varsigma^{2}-\mathsf{E}\varsigma}\right)-1}\right)}\right]
\end{aligned}
$$
of the right-hand side of \eqref{SP11.4} follows from \eqref{SP6.1}
and \eqref{SP6.2} (Theorem \ref{i1}). This term is similar to that
(5.2) in \cite{Abramov 2007}. The new term which takes into account
the water costs is $c^{\mathrm{upper}}$. Taking into account
representation \eqref{SP9.1}, for this term we obtain:
$$
\begin{aligned}
c^{\mathrm{upper}}&=\lim_{L\to\infty}\sum_{j=0}^{L-1}
q_{L-j}c_{L-j}\\&=\lim_{L\to\infty}\sum_{j=0}^{L-1} c_{L-j}\cdot
\frac{\exp\left(\dfrac{2C\mathsf{E}\varsigma}{\widetilde{\rho}_{1,2}(\mathsf{E}\varsigma)^{3}+\mathsf{E}\varsigma^{2}-\mathsf{E}\varsigma}\right)}
{\exp\left(\dfrac{2C\mathsf{E}\varsigma}{\widetilde{\rho}_{1,2}(\mathsf{E}\varsigma)^{3}+\mathsf{E}\varsigma^{2}-\mathsf{E}\varsigma}\right)-1}\\
&\ \ \ \times\left(1-\frac{2\delta
L\mathsf{E}\varsigma}{(\widetilde{\rho}_{1,2}(\mathsf{E}\varsigma)^{3}+\mathsf{E}\varsigma^{2}-\mathsf{E}\varsigma)L}\right)^j
\frac{2\delta
L\mathsf{E}\varsigma}{(\widetilde{\rho}_{1,2}(\mathsf{E}\varsigma)^{3}+\mathsf{E}\varsigma^{2}-\mathsf{E}\varsigma)L},
\end{aligned}
$$
and, because of $\lim\limits_{L\to\infty}\delta L=C$, representation \eqref{SP11.5} follows.
\end{proof}

\subsection{The case $\rho=1-\delta$, $\delta>0$.}\label{Case 3O}
 In the case $\rho=1-\delta$, $\delta>0$ we have the following statement.

\begin{prop}\label{prop3}
Under the assumptions of Theorem \ref{thm4} denote the objective
function $J$ by $J^{\mathrm{lower}}$. We have the following
representation
\begin{equation}\label{SP12.1}
\begin{aligned}
J^{\mathrm{lower}}&=C\left[j_1\exp\left(\frac{\widetilde{\rho}_{1,2}(\mathsf{E}\varsigma)^{3}+\mathsf{E}\varsigma^{2}-\mathsf{E}\varsigma}
{2C\mathsf{E}\varsigma}\right)\right.\\
&\ \ \ \left.+j_2\frac{\rho_2}{1-\rho_2}
\left(\exp\left(\frac{\widetilde{\rho}_{1,2}(\mathsf{E}\varsigma)^{3}+\mathsf{E}\varsigma^{2}-\mathsf{E}\varsigma}
{2C\mathsf{E}\varsigma}\right)-1\right)\right]\\
&\ \ \ +c^{\mathrm{lower}},
\end{aligned}
\end{equation}
where
\begin{equation}\label{SP12.2}
\begin{aligned}
c^{\mathrm{lower}}&=\frac{2C\mathsf{E}\varsigma}
{\widetilde{\rho}_{1,2}(\mathsf{E}\varsigma)^{3}+\mathsf{E}\varsigma^{2}-\mathsf{E}\varsigma}
\cdot\frac{1}{\exp\left(\dfrac{2C\mathsf{E}\varsigma}
{\widetilde{\rho}_{1,2}(\mathsf{E}\varsigma)^{3}+\mathsf{E}\varsigma^{2}-\mathsf{E}\varsigma}\right)-1}\\
&\ \ \ \times
\lim_{L\to\infty}\frac{1}{L}~\widehat{C}_{L}\left(1+\frac{2C\mathsf{E}\varsigma}
{(\widetilde{\rho}_{1,2}(\mathsf{E}\varsigma)^{3}+\mathsf{E}\varsigma^{2}-\mathsf{E}\varsigma)L}\right),
\end{aligned}
\end{equation}
and $\widehat{C}_L(z)=\sum\limits_{j=0}^{L-1}c_{L-j}z^j$ is a
backward generating cost function.
\end{prop}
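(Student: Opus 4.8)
The plan is to treat the three summands of $J=p_1J_1+p_2J_2+\sum_{i=1}^{L}c_iq_i$ separately, mirroring the proof of Proposition \ref{prop2}. Throughout, write $\Lambda=\widetilde{\rho}_{1,2}(\mathsf{E}\varsigma)^{3}+\mathsf{E}\varsigma^{2}-\mathsf{E}\varsigma$ for brevity. The two damage terms $p_1J_1=j_1Lp_1$ and $p_2J_2=j_2Lp_2$ will be controlled by the asymptotics of Theorem \ref{i3}, while the water-cost term $\sum_ic_iq_i$ will be controlled by the explicit asymptotics of $q_{L-j}$ supplied by \eqref{SP10.12} of Theorem \ref{thm4}. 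The structural point making everything finite is that $p_1,p_2=O(\delta)$ and $J_k=j_kL$, so that $Lp_k$ is of order $L\delta\to C$.

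First I would compute the limit of the damage part. Substituting \eqref{SP6.11} and \eqref{SP6.12} and using $L\delta\to C$ gives
\begin{equation*}
\lim_{L\to\infty}\left(j_1Lp_1+j_2Lp_2\right)
=C\left[j_1\exp\left(\frac{\Lambda}{2C\mathsf{E}\varsigma}\right)
+j_2\frac{\rho_2}{1-\rho_2}\left(\exp\left(\frac{\Lambda}{2C\mathsf{E}\varsigma}\right)-1\right)\right],
\end{equation*}
which is exactly the bracketed term of \eqref{SP12.1}. This step is routine once Theorem \ref{i3} is available.

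Next I would reindex the cost sum as $\sum_{i=1}^{L}c_iq_i=\sum_{j=0}^{L-1}c_{L-j}q_{L-j}$ and insert \eqref{SP10.12} (which already incorporates the normalization $p_1+p_2+\sum_iq_i=1$, so that no separate normalization step is needed). The factors in \eqref{SP10.12} independent of $j$, namely $\bigl(\exp(2C\mathsf{E}\varsigma/\Lambda)-1\bigr)^{-1}$ and $2\delta\mathsf{E}\varsigma/\Lambda$, pull out of the sum, leaving $\sum_{j=0}^{L-1}c_{L-j}\bigl(1+2\delta\mathsf{E}\varsigma/\Lambda\bigr)^{j}$, which is precisely the backward generating cost function $\widehat C_L$ evaluated at $1+2\delta\mathsf{E}\varsigma/\Lambda$. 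Replacing $\delta$ by $C/L$ through $L\delta\to C$ turns the pulled-out prefactor $2\delta\mathsf{E}\varsigma/\Lambda$ into $(2C\mathsf{E}\varsigma/\Lambda)\cdot(1/L)$ and the argument into $1+2C\mathsf{E}\varsigma/(\Lambda L)$, producing exactly \eqref{SP12.2}.

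The hard part will be justifying this last passage to the limit: one must show that $\lim_{L\to\infty}\frac1L\widehat C_L\bigl(1+2C\mathsf{E}\varsigma/(\Lambda L)\bigr)$ exists and is finite, and that the error $[1+o(1)]$ in \eqref{SP10.12}, stated there for fixed $j$, may be carried uniformly through the summation over $j=0,\dots,L-1$. This is more delicate than in Proposition \ref{prop2}: there the geometric factor $1-2\delta\mathsf{E}\varsigma/\Lambda$ is strictly below $1$, so the sum is dominated by small $j$ (levels near the top), whereas here $1+2\delta\mathsf{E}\varsigma/\Lambda$ exceeds $1$ and the summands grow in $j$, so the dominant contribution comes from large $j$, i.e.\ from the lowest water levels $c_1,c_2,\dots$. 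Finiteness and convergence are secured by the standing hypotheses on the costs --- $c_i$ non-increasing and bounded, with $c_1$ and $\lim_{L\to\infty}c_L$ fixed and positive --- together with $\bigl(1+2C\mathsf{E}\varsigma/(\Lambda L)\bigr)^{L}\to\exp(2C\mathsf{E}\varsigma/\Lambda)$, which keeps the weighted Ces\`aro-type average $\frac1L\widehat C_L(\cdot)$ bounded and convergent.
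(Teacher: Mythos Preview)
Your proposal is correct and follows essentially the same route as the paper: the damage term is read off from Theorem \ref{i3} (relations \eqref{SP6.11}--\eqref{SP6.12}) together with $L\delta\to C$, and the water-cost term is obtained by substituting \eqref{SP10.12} into $\sum_{j=0}^{L-1}c_{L-j}q_{L-j}$ and replacing $\delta$ by $C/L$ via $L\delta\to C$, yielding the backward generating cost function $\widehat C_L$ evaluated at $1+2C\mathsf{E}\varsigma/(\Lambda L)$. Your added discussion of the uniformity of the $[1+o(1)]$ in \eqref{SP10.12} across the full range $j=0,\dots,L-1$ is a legitimate concern that the paper itself does not address --- it simply writes ``because of $\lim_{L\to\infty}\delta L=C$, representation \eqref{SP12.2} follows'' --- so on this point you are being more careful than the original.
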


\begin{proof}
The representation for the term
\begin{equation*}
C\left[j_1\exp\left(\frac{\widetilde{\rho}_{1,2}(\mathsf{E}\varsigma)^{3}+\mathsf{E}\varsigma^{2}-\mathsf{E}\varsigma}
{2C\mathsf{E}\varsigma}\right)+j_2\frac{\rho_2}{1-\rho_2}
\left(\exp\left(\frac{\widetilde{\rho}_{1,2}(\mathsf{E}\varsigma)^{3}+\mathsf{E}\varsigma^{2}-\mathsf{E}\varsigma}
{2C\mathsf{E}\varsigma}\right)-1\right)\right]
\end{equation*}
of the right-hand side of \eqref{SP12.1} follows from \eqref{SP6.11} and \eqref{SP6.12} (Theorem \ref{i3}).
This term is similar to that (5.3) in \cite{Abramov 2007}. The new term,
which takes into account the water costs, is $c^{\mathrm{lower}}$. Taking into account representation \eqref{SP10.12}, for this
term we obtain:
$$
\begin{aligned}
c^{\mathrm{lower}}&=\lim_{L\to\infty}\sum_{j=0}^{L-1}
q_{L-j}c_{L-j}\\&=\lim_{L\to\infty}\sum_{j=0}^{L-1}
c_{L-j}\cdot\frac{1}{\exp\left(\dfrac{2C\mathsf{E}\varsigma}
{\widetilde{\rho}_{1,2}(\mathsf{E}\varsigma)^{3}+\mathsf{E}\varsigma^{2}-\mathsf{E}\varsigma}\right)-1}\\
&\ \ \ \times \left(1+\frac{2\delta L\mathsf{E}\varsigma}
{(\widetilde{\rho}_{1,2}(\mathsf{E}\varsigma)^{3}+\mathsf{E}\varsigma^{2}-\mathsf{E}\varsigma)L}\right)^j
\frac{2\delta L\mathsf{E}\varsigma}
{(\widetilde{\rho}_{1,2}(\mathsf{E}\varsigma)^{3}+\mathsf{E}\varsigma^{2}-\mathsf{E}\varsigma)L},
\end{aligned}
$$
and, because of $\lim\limits_{L\to\infty}\delta L=C$, representation \eqref{SP12.2} follows.
\end{proof}

\section{A solution to the control problem and its
properties}\label{Solution} In this section we discuss the
solution to the control problem and study its properties. The
functionals $J^{\mathrm{upper}}$ and $J^{\mathrm{lower}}$ are correspondingly given by \eqref{SP11.4}
and \eqref{SP12.1}, and the last terms in these functionals are correspondingly given by
\eqref{SP11.5}
and \eqref{SP12.2}. For our further analysis we need in other representations for these last terms.

Denote
\begin{equation}\label{SCP3}
\psi(C)=\lim_{L\to\infty}\dfrac{\sum\limits_{j=0}^{L-1}c_{L-j}\left(1-\dfrac{2C\mathsf{E}\varsigma}
{(\widetilde{\rho}_{1,2}(\mathsf{E}\varsigma)^{3}+\mathsf{E}\varsigma^{2}-\mathsf{E}\varsigma)L}\right)^j}
{\sum\limits_{j=0}^{L-1}\left(1-\dfrac{2C\mathsf{E}\varsigma}
{(\widetilde{\rho}_{1,2}(\mathsf{E}\varsigma)^{3}+\mathsf{E}\varsigma^{2}-\mathsf{E}\varsigma)L}\right)^j},
\end{equation}
and
\begin{equation}\label{SCP4}
\eta(C)=\lim\limits_{L\to\infty}\dfrac{\sum\limits_{j=0}^{L-1}c_{L-j}\left(1+\dfrac{2C\mathsf{E}\varsigma}
{(\widetilde{\rho}_{1,2}(\mathsf{E}\varsigma)^{3}+\mathsf{E}\varsigma^{2}-\mathsf{E}\varsigma)L}\right)^j}
{\sum\limits_{j=0}^{L-1}\left(1+\dfrac{2C\mathsf{E}\varsigma}
{(\widetilde{\rho}_{1,2}(\mathsf{E}\varsigma)^{3}+\mathsf{E}\varsigma^{2}-\mathsf{E}\varsigma)L}\right)^j}.
\end{equation}
Since $\{c_i\}$ is a bounded sequence, then the limits of
\eqref{SCP3} and \eqref{SCP4} do exist.

The relations between $c^{\mathrm{upper}}$ and $\psi(C)$ and,
respectively, between $c^{\mathrm{lower}}$ and $\eta(C)$ are given
in the lemma below.

\begin{lem}\label{lem5} We have:
\begin{equation}\label{SCP5}
c^{\mathrm{upper}}=\psi(C),
\end{equation}
and
\begin{equation}\label{SCP6}
c^{\mathrm{lower}}=\eta(C).
\end{equation}
\end{lem}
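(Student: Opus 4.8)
The plan is to recognize the denominators in the definitions \eqref{SCP3} and \eqref{SCP4} as finite geometric series, evaluate them in closed form, pass to the limit $L\to\infty$, and observe that the resulting constant factor reproduces exactly the prefactor appearing in \eqref{SP11.5} and \eqref{SP12.2}. For brevity I would write $A=\dfrac{2C\mathsf{E}\varsigma}{\widetilde{\rho}_{1,2}(\mathsf{E}\varsigma)^{3}+\mathsf{E}\varsigma^{2}-\mathsf{E}\varsigma}$, so that in the ``upper'' case $\widehat C_L$ is evaluated at $1-A/L$ and in the ``lower'' case at $1+A/L$. I would also note that the numerator in \eqref{SCP3} (resp.\ \eqref{SCP4}) is precisely $\widehat C_L(1-A/L)$ (resp.\ $\widehat C_L(1+A/L)$), since $\widehat C_L(z)=\sum_{j=0}^{L-1}c_{L-j}z^j$.

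To establish \eqref{SCP5}, I would compute the denominator of \eqref{SCP3} as
\begin{equation*}
\sum_{j=0}^{L-1}\left(1-\frac{A}{L}\right)^j=\frac{L}{A}\left[1-\left(1-\frac{A}{L}\right)^{L}\right],
\end{equation*}
and since $\left(1-\frac{A}{L}\right)^{L}\to e^{-A}$, the denominator divided by $L$ tends to $\dfrac{1-e^{-A}}{A}$. Dividing numerator and denominator of \eqref{SCP3} by $L$ then gives
\begin{equation*}
\psi(C)=\frac{A}{1-e^{-A}}\lim_{L\to\infty}\frac{1}{L}\,\widehat C_L\!\left(1-\frac{A}{L}\right).
\end{equation*}
Comparing with \eqref{SP11.5} and using the identity $\dfrac{e^{A}}{e^{A}-1}=\dfrac{1}{1-e^{-A}}$, the prefactor $A\cdot\dfrac{e^{A}}{e^{A}-1}$ in $c^{\mathrm{upper}}$ coincides with $\dfrac{A}{1-e^{-A}}$, which yields \eqref{SCP5}.

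The argument for \eqref{SCP6} is identical with $1-A/L$ replaced by $1+A/L$. Here the denominator of \eqref{SCP4} equals $\dfrac{L}{A}\bigl[(1+A/L)^{L}-1\bigr]$, and since $(1+A/L)^{L}\to e^{A}$ the denominator divided by $L$ tends to $\dfrac{e^{A}-1}{A}$; hence $\eta(C)=\dfrac{A}{e^{A}-1}\lim_{L\to\infty}\frac1L\,\widehat C_L(1+A/L)$, which is exactly the prefactor $A\cdot\dfrac{1}{e^{A}-1}$ of $c^{\mathrm{lower}}$ in \eqref{SP12.2}.

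The computation is essentially routine, so the only point requiring a word of justification is the existence of the auxiliary limit $\lim_{L\to\infty}\frac1L\widehat C_L(1\mp A/L)$ used implicitly in \eqref{SP11.5} and \eqref{SP12.2}. This is where I would lean on the remark already made after \eqref{SCP4}: since $\{c_i\}$ is bounded the ratios $\psi(C)$ and $\eta(C)$ converge, and because the normalized denominators converge to the strictly positive constants $\dfrac{1-e^{-A}}{A}$ and $\dfrac{e^{A}-1}{A}$, the normalized numerators converge as well. Thus no separate existence argument is needed, and both displayed identities follow directly from matching the prefactors.
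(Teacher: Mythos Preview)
Your proof is correct and follows essentially the same route as the paper's own proof. Both arguments evaluate the geometric-series denominator of \eqref{SCP3} (resp.\ \eqref{SCP4}) explicitly, take the limit $(1\mp A/L)^L\to e^{\mp A}$, and then match the resulting constant $\dfrac{A}{1-e^{-A}}=A\cdot\dfrac{e^{A}}{e^{A}-1}$ (resp.\ $\dfrac{A}{e^{A}-1}$) against the prefactor in \eqref{SP11.5} (resp.\ \eqref{SP12.2}); your use of the abbreviation $A$ and the direct identification of the numerator with $\widehat C_L(1\mp A/L)$ makes the computation a bit more transparent than the paper's version, but the substance is identical.
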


\begin{proof}
From \eqref{SCP3} and \eqref{SCP4} we correspondingly have the
representations:
\begin{equation}\label{SCP7}
\begin{aligned}
&\lim_{L\to\infty}\frac{1}{L}\sum_{j=0}^{L-1}
c_{L-j}\left(1-\frac{2C\mathsf{E}\varsigma}
{(\widetilde{\rho}_{1,2}(\mathsf{E}\varsigma)^{3}+\mathsf{E}\varsigma^{2}-\mathsf{E}\varsigma)L}\right)^j\\&=\psi(C)
\lim_{L\to\infty}\frac{1}{L}\sum_{j=0}^{L-1}
\left(1-\frac{2C\mathsf{E}\varsigma}
{(\widetilde{\rho}_{1,2}(\mathsf{E}\varsigma)^{3}+\mathsf{E}\varsigma^{2}-\mathsf{E}\varsigma)L}\right)^j,
\end{aligned}
\end{equation}
and
\begin{equation}\label{SCP8}
\begin{aligned}
&\lim_{L\to\infty}\frac{1}{L}\sum_{j=0}^{L-1}
c_{L-j}\left(1+\frac{2C\mathsf{E}\varsigma}
{(\widetilde{\rho}_{1,2}(\mathsf{E}\varsigma)^{3}+\mathsf{E}\varsigma^{2}-\mathsf{E}\varsigma)L}\right)^j\\&=\eta(C)
\lim_{L\to\infty}\frac{1}{L}\sum_{j=0}^{L-1}
\left(1+\frac{2C\mathsf{E}\varsigma}
{(\widetilde{\rho}_{1,2}(\mathsf{E}\varsigma)^{3}+\mathsf{E}\varsigma^{2}-\mathsf{E}\varsigma)L}\right)^j.
\end{aligned}
\end{equation}
The desired results follow by direct transformations of the
corresponding right-hand sides of \eqref{SCP7} and \eqref{SCP8}.

Indeed, for the right-hand side of \eqref{SCP7} we
obtain:
\begin{equation}\label{SCP9}
\begin{aligned}
&\psi(C) \lim_{L\to\infty}\frac{1}{L}\sum_{j=0}^{L-1}
\left(1-\frac{2C\mathsf{E}\varsigma}
{(\widetilde{\rho}_{1,2}(\mathsf{E}\varsigma)^{3}+\mathsf{E}\varsigma^{2}-\mathsf{E}\varsigma)L}\right)^j\\&=\psi(C)
\left[1-\exp\left(-\frac{2C\mathsf{E}\varsigma}
{(\widetilde{\rho}_{1,2}(\mathsf{E}\varsigma)^{3}+\mathsf{E}\varsigma^{2}-\mathsf{E}\varsigma)L}\right)\right]
\frac
{(\widetilde{\rho}_{1,2}(\mathsf{E}\varsigma)^{3}+\mathsf{E}\varsigma^{2}-\mathsf{E}\varsigma)L}{2C\mathsf{E}\varsigma}.
\end{aligned}
\end{equation}
On the other hand, from \eqref{SP11.5} we have:
\begin{equation}\label{SCP10}
\begin{aligned}
&c^{\mathrm{upper}}\left[1-\exp\left(-\frac{2C\mathsf{E}\varsigma}
{\widetilde{\rho}_{1,2}(\mathsf{E}\varsigma)^{3}+\mathsf{E}\varsigma^{2}-\mathsf{E}\varsigma}\right)\right]
\frac
{\widetilde{\rho}_{1,2}(\mathsf{E}\varsigma)^{3}+\mathsf{E}\varsigma^{2}-\mathsf{E}\varsigma}{2C\mathsf{E}\varsigma}\\&=
\lim_{L\to\infty}\frac{1}{L}\sum_{j=0}^{L-1}
c_{L-j}\left(1-\frac{2C\mathsf{E}\varsigma}
{(\widetilde{\rho}_{1,2}(\mathsf{E}\varsigma)^{3}+\mathsf{E}\varsigma^{2}-\mathsf{E}\varsigma)L}\right)^j.
\end{aligned}
\end{equation}
Hence, from \eqref{SCP7}, \eqref{SCP9} and \eqref{SCP10} we obtain
\eqref{SCP3}. The proof of \eqref{SCP6} is completely analogous
and uses the representations of \eqref{SP12.2} and \eqref{SCP8}.
\end{proof}

The next lemma establishes the main properties of functions
$\psi(C)$ and $\eta(C)$.

\begin{lem}\label{lem6}
The function $\psi(C)$ is a non-increasing function, and its maximum
is $\psi(0)=c^*$. The function $\eta(C)$ is a non-decreasing
function, and its minimum is $\eta(0)=c^*$. (Recall that
$c^*=\lim\limits_{L\to\infty}\frac{1}{L}\sum\limits_{i=1}^L c_i$ is
defined in Proposition \ref{prop1}.)
\end{lem}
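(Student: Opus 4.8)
The plan is to recognise each of $\psi(C)$ and $\eta(C)$ as a limit of \emph{weighted averages} of the cost values and to prove the monotonicity at the pre-limit level before passing to the limit. Write $\kappa=\dfrac{2\mathsf{E}\varsigma}{\widetilde{\rho}_{1,2}(\mathsf{E}\varsigma)^{3}+\mathsf{E}\varsigma^{2}-\mathsf{E}\varsigma}$ and, for a real parameter $x>0$, set
\begin{equation*}
G_L(x)=\frac{\sum_{j=0}^{L-1}c_{L-j}\,x^{j}}{\sum_{j=0}^{L-1}x^{j}}.
\end{equation*}
Then $\psi(C)=\lim_{L\to\infty}G_L(1-\kappa C/L)$ and $\eta(C)=\lim_{L\to\infty}G_L(1+\kappa C/L)$, the limits existing by the remark following \eqref{SCP4}. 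The quantity $G_L(x)$ is a convex combination of the numbers $a_j:=c_{L-j}$ with weights $x^{j}/\sum_i x^{i}$; because $\{c_i\}$ is non-increasing, the sequence $a_0\le a_1\le\cdots\le a_{L-1}$ is \emph{non-decreasing} in $j$. The whole argument rests on the single fact that $G_L$ is non-decreasing in $x$.

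First I would differentiate $G_L$. With $N(x)=\sum_{j}c_{L-j}x^{j}$ and $D(x)=\sum_{j}x^{j}$ one has, after a Chebyshev-type symmetrisation (swapping the two summation indices and averaging),
\begin{equation*}
N'(x)D(x)-N(x)D'(x)=\frac{1}{2}\sum_{j=0}^{L-1}\sum_{k=0}^{L-1}(j-k)\bigl(c_{L-j}-c_{L-k}\bigr)x^{\,j+k-1}.
\end{equation*}
Since $a_j=c_{L-j}$ is non-decreasing, each summand $(j-k)(c_{L-j}-c_{L-k})$ is non-negative, and $x^{\,j+k-1}>0$; hence $N'D-ND'\ge0$ and $G_L'(x)=(N'D-ND')/D^{2}\ge0$. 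Thus $G_L$ is non-decreasing on $(0,\infty)$ for every fixed $L$. This correlation inequality is precisely the majorisation mechanism alluded to in the Introduction (cf. \cite{Hardy Littlewood Polya 1952}).

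It remains to translate this into monotonicity in $C$ and to pass to the limit. For $\psi$ the argument $x=1-\kappa C/L$ is decreasing in $C$ (and lies in $(0,1)$ once $L>\kappa C$), so $C\mapsto G_L(1-\kappa C/L)$ is non-increasing; for $\eta$ the argument $x=1+\kappa C/L>1$ is increasing in $C$, so $C\mapsto G_L(1+\kappa C/L)$ is non-decreasing. Letting $L\to\infty$ and using that weak inequalities survive pointwise limits, $\psi$ is non-increasing and $\eta$ is non-decreasing. Finally, at $C=0$ both arguments equal $1$, so $G_L(1)=\frac{1}{L}\sum_{j=0}^{L-1}c_{L-j}=\frac{1}{L}\sum_{i=1}^{L}c_i\to c^{*}$, whence $\psi(0)=\eta(0)=c^{*}$; combined with the monotonicity just established, $c^{*}$ is the maximum of $\psi$ and the minimum of $\eta$ over $C\ge0$. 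The only delicate point is the limit transfer: monotonicity is proved for each finite $L$, and since $\psi(C)$ and $\eta(C)$ are already known to exist, comparing $G_L(\cdot)$ at two parameter values and letting $L\to\infty$ carries the inequality to the limit. Beyond this bookkeeping I expect no genuine obstacle, the crux being the sign of $N'D-ND'$.
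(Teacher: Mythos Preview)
Your proof is correct and rests on the same correlation inequality as the paper's, but your route is more direct. You recognise the pre-limit quantity as a single function $G_L(x)$ of the base $x$ of the geometric weights and prove $G_L'(x)\ge0$ via the Chebyshev symmetrisation $N'D-ND'=\tfrac12\sum_{j,k}(j-k)(c_{L-j}-c_{L-k})x^{j+k-1}$; monotonicity of both $\psi$ and $\eta$ and the identification of the extremum at $C=0$ then fall out in one stroke. The paper instead first applies the Chebyshev sum inequality \eqref{SCP11} to show $\psi(0)=c^*$ is the maximum, and then proves monotonicity separately by the factorisation $1-\tfrac{\kappa C}{L}=(1-\tfrac{\kappa C_1}{L})(1-\tfrac{\kappa(C-C_1)}{L})+O(L^{-2})$ together with a contradiction argument that again invokes \eqref{SCP11}. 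Your approach buys brevity and a unified treatment of the two statements; the paper's approach keeps everything at the level of sums (no derivative) and makes the role of the limiting factorisation explicit, which it reuses implicitly in the surrounding discussion. Either way the essential content is the single sign observation $(j-k)(c_{L-j}-c_{L-k})\ge0$.
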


\begin{proof} Let us first prove that $\psi(0)=c^*$ is a maximum
of $\psi(C)$. For this purpose we use the following well-known
inequality (e.g. Hardy, Littlewood and Polya \cite{Hardy
Littlewood Polya 1952} or Marschall and Olkin \cite{Marschall
Olkin 1979}). Let $\{a_n\}$ and $\{b_n\}$ be arbitrary sequences,
one of them is increasing and another decreasing. Then for any
finite sum we have
\begin{equation}\label{SCP11}
\sum_{n=1}^l a_nb_n\leq\frac{1}{l}\sum_{n=1}^l a_n\sum_{n=1}^l
b_n.
\end{equation}

Applying inequality \eqref{SCP11} to the finite sums of the
left-hand side of \eqref{SCP7} and passing to the limit as
$L\to\infty$, we have
\begin{equation}\label{SCP12}
\begin{aligned}
&\lim_{L\to\infty}\frac{1}{L}\sum_{j=0}^{L-1}
c_{L-j}\left(1-\frac{2C\mathsf{E}\varsigma}
{(\widetilde{\rho}_{1,2}(\mathsf{E}\varsigma)^{3}+\mathsf{E}\varsigma^{2}-\mathsf{E}\varsigma)L}\right)^j\\
&\leq \lim_{L\to\infty}\frac{1}{L}\sum_{j=0}^{L-1}
c_{L-j}\lim_{L\to\infty}\frac{1}{L}\sum_{j=0}^{L-1}
\left(1-\frac{2C\mathsf{E}\varsigma}
{(\widetilde{\rho}_{1,2}(\mathsf{E}\varsigma)^{3}+\mathsf{E}\varsigma^{2}-\mathsf{E}\varsigma)L}\right)^j\\
&=\psi(0)\lim_{L\to\infty}\frac{1}{L}\sum_{j=0}^{L-1}
\left(1-\frac{2C\mathsf{E}\varsigma}
{(\widetilde{\rho}_{1,2}(\mathsf{E}\varsigma)^{3}+\mathsf{E}\varsigma^{2}-\mathsf{E}\varsigma)L}\right)^j.
\end{aligned}
\end{equation}
Then, comparing \eqref{SCP7} with \eqref{SCP12} enables us to
conclude,
\begin{equation*}\label{SCP13}
\psi(0)=c^*\geq\psi(C),
\end{equation*}
i.e. $\psi(0)=c^*$ is the maximum value of $\psi(C)$.

Prove now, that $\psi(C)$ is a not increasing function, i.e. for
any nonnegative $C_1\leq C$ we have $\psi(C)\leq\psi(C_1)$.

To prove this note, that for small positive $\delta_1$ and
$\delta_2$ we have (1-$\delta_1$-$\delta_2$) = (1-$\delta_1$)
(1-$\delta_2$) + $O(\delta_1\delta_2)$. Using this idea, one can
prove the monotonicity of $\psi(C)$ by the replacement
\begin{equation}\label{SCP14.0}
\begin{aligned}
&1-\frac{2C\mathsf{E}\varsigma}
{(\widetilde{\rho}_{1,2}(\mathsf{E}\varsigma)^{3}+\mathsf{E}\varsigma^{2}-\mathsf{E}\varsigma)L}\\&=
\left(1-\frac{2C_1\mathsf{E}\varsigma}
{(\widetilde{\rho}_{1,2}(\mathsf{E}\varsigma)^{3}+\mathsf{E}\varsigma^{2}-\mathsf{E}\varsigma)L}\right)
\left(1-\frac{2(C-C_1)\mathsf{E}\varsigma}
{(\widetilde{\rho}_{1,2}(\mathsf{E}\varsigma)^{3}+\mathsf{E}\varsigma^{2}-\mathsf{E}\varsigma)L}\right)\\
&\ \ \ +O\left(\frac{1}{L^2}\right), \ \ C>C_1
\end{aligned}
\end{equation}
in the above asymptotic relations for large $L$. Indeed, notice
that
\begin{equation}\label{SCP14.1}
\begin{aligned}
&\lim_{L\to\infty}\frac{1}{L}\sum_{j=0}^{L-1}
\left(1-\frac{2C_1\mathsf{E}\varsigma}
{(\widetilde{\rho}_{1,2}(\mathsf{E}\varsigma)^{3}+\mathsf{E}\varsigma^{2}-\mathsf{E}\varsigma)L}\right)^j\\
&\ \ \ \times \left(1-\frac{2(C-C_1)\mathsf{E}\varsigma}
{(\widetilde{\rho}_{1,2}(\mathsf{E}\varsigma)^{3}+\mathsf{E}\varsigma^{2}-\mathsf{E}\varsigma)L}\right)^j\\
&= \lim_{L\to\infty}\frac{1}{L}\sum_{j=0}^{L-1}
\left(1-\frac{2C_1\mathsf{E}\varsigma}
{(\widetilde{\rho}_{1,2}(\mathsf{E}\varsigma)^{3}+\mathsf{E}\varsigma^{2}-\mathsf{E}\varsigma)L}\right)^j\\
&\ \ \ \times \lim_{L\to\infty}\frac{1}{L}\sum_{j=0}^{L-1}
\left(1-\frac{2(C-C_1)\mathsf{E}\varsigma}
{(\widetilde{\rho}_{1,2}(\mathsf{E}\varsigma)^{3}+\mathsf{E}\varsigma^{2}-\mathsf{E}\varsigma)L}\right)^j
\end{aligned}
\end{equation}
Therefore, for any non-decreasing sequence $a_j$
\begin{equation}\label{SCP14.2}
\begin{aligned}
&\lim_{L\to\infty}\frac{1}{L}\sum_{j=0}^{L-1}
a_j\left(1-\frac{2C_1\mathsf{E}\varsigma}
{(\widetilde{\rho}_{1,2}(\mathsf{E}\varsigma)^{3}+\mathsf{E}\varsigma^{2}-\mathsf{E}\varsigma)L}\right)^j\\
&\ \ \ \times\left(1-\frac{2(C-C_1)\mathsf{E}\varsigma}
{(\widetilde{\rho}_{1,2}(\mathsf{E}\varsigma)^{3}+\mathsf{E}\varsigma^{2}-\mathsf{E}\varsigma)L}\right)^j\\
&\leq \lim_{L\to\infty}\frac{1}{L}\sum_{j=0}^{L-1}a_j
\left(1-\frac{2C_1\mathsf{E}\varsigma}
{(\widetilde{\rho}_{1,2}(\mathsf{E}\varsigma)^{3}+\mathsf{E}\varsigma^{2}-\mathsf{E}\varsigma)L}\right)^j\\
&\ \ \ \times\lim_{L\to\infty}\frac{1}{L}\sum_{j=0}^{L-1}
\left(1-\frac{2(C-C_1)\mathsf{E}\varsigma}
{(\widetilde{\rho}_{1,2}(\mathsf{E}\varsigma)^{3}+\mathsf{E}\varsigma^{2}-\mathsf{E}\varsigma)L}\right)^j.
\end{aligned}
\end{equation}
Indeed, assume for contrary that
\begin{equation}\label{SCP14.2+}
\begin{aligned}
&\lim_{L\to\infty}\frac{1}{L}\sum_{j=0}^{L-1}
a_j\left(1-\frac{2C_1\mathsf{E}\varsigma}
{(\widetilde{\rho}_{1,2}(\mathsf{E}\varsigma)^{3}+\mathsf{E}\varsigma^{2}-\mathsf{E}\varsigma)L}\right)^j\\
&\ \ \ \times\left(1-\frac{2(C-C_1)\mathsf{E}\varsigma}
{(\widetilde{\rho}_{1,2}(\mathsf{E}\varsigma)^{3}+\mathsf{E}\varsigma^{2}-\mathsf{E}\varsigma)L}\right)^j\\
&> \lim_{L\to\infty}\frac{1}{L}\sum_{j=0}^{L-1}a_j
\left(1-\frac{2C_1\mathsf{E}\varsigma}
{(\widetilde{\rho}_{1,2}(\mathsf{E}\varsigma)^{3}+\mathsf{E}\varsigma^{2}-\mathsf{E}\varsigma)L}\right)^j\\
&\ \ \ \times\lim_{L\to\infty}\frac{1}{L}\sum_{j=0}^{L-1}
\left(1-\frac{2(C-C_1)\mathsf{E}\varsigma}
{(\widetilde{\rho}_{1,2}(\mathsf{E}\varsigma)^{3}+\mathsf{E}\varsigma^{2}-\mathsf{E}\varsigma)L}\right)^j.
\end{aligned}
\end{equation}
Then, applying inequality \eqref{SCP11} to the right-hand side of
\eqref{SCP14.2+}, we obtain:
\begin{equation}\label{SCP14.2++}
\begin{aligned}
&\lim_{L\to\infty}\frac{1}{L}\sum_{j=0}^{L-1}a_j
\left(1-\frac{2C_1\mathsf{E}\varsigma}
{(\widetilde{\rho}_{1,2}(\mathsf{E}\varsigma)^{3}+\mathsf{E}\varsigma^{2}-\mathsf{E}\varsigma)L}\right)^j\\
&\ \ \ \times\lim_{L\to\infty}\frac{1}{L}\sum_{j=0}^{L-1}
\left(1-\frac{2(C-C_1)\mathsf{E}\varsigma}
{(\widetilde{\rho}_{1,2}(\mathsf{E}\varsigma)^{3}+\mathsf{E}\varsigma^{2}-\mathsf{E}\varsigma)L}\right)^j\\
&\leq\lim_{L\to\infty}\frac{1}{L}\sum_{j=0}^{L-1}a_j\lim_{L\to\infty}\frac{1}{L}\sum_{j=0}^{L-1}
\left(1-\frac{2C_1\mathsf{E}\varsigma}
{(\widetilde{\rho}_{1,2}(\mathsf{E}\varsigma)^{3}+\mathsf{E}\varsigma^{2}-\mathsf{E}\varsigma)L}\right)^j\\
&\ \ \ \times\lim_{L\to\infty}\frac{1}{L}\sum_{j=0}^{L-1}
\left(1-\frac{2(C-C_1)\mathsf{E}\varsigma}
{(\widetilde{\rho}_{1,2}(\mathsf{E}\varsigma)^{3}+\mathsf{E}\varsigma^{2}-\mathsf{E}\varsigma)L}\right)^j\\
&=\lim_{L\to\infty}\frac{1}{L}\sum_{j=0}^{L-1}a_j\lim_{L\to\infty}\frac{1}{L}\sum_{j=0}^{L-1}
\left(1-\frac{2C\mathsf{E}\varsigma}
{(\widetilde{\rho}_{1,2}(\mathsf{E}\varsigma)^{3}+\mathsf{E}\varsigma^{2}-\mathsf{E}\varsigma)L}\right)^j.
\end{aligned}
\end{equation}
Since the left-hand side of \eqref{SCP14.2+} is
$$
\lim_{L\to\infty}\frac{1}{L}\sum_{j=0}^{L-1}
a_j\left(1-\frac{2C\mathsf{E}\varsigma}
{(\widetilde{\rho}_{1,2}(\mathsf{E}\varsigma)^{3}+\mathsf{E}\varsigma^{2}-\mathsf{E}\varsigma)L}\right)^j
$$
(see relation \eqref{SCP14.0}), then comparison of the last
obtained term in \eqref{SCP14.2++} with the left-hand side of
\eqref{SCP14.2+} enables us to write:
\begin{equation*}\label{SCP14.2+++}
\begin{aligned}
&\lim_{L\to\infty}\frac{1}{L}\sum_{j=0}^{L-1}
a_j\left(1-\frac{2C\mathsf{E}\varsigma}
{(\widetilde{\rho}_{1,2}(\mathsf{E}\varsigma)^{3}+\mathsf{E}\varsigma^{2}-\mathsf{E}\varsigma)L}\right)^j\\
&>\lim_{L\to\infty}\frac{1}{L}\sum_{j=0}^{L-1}a_j\lim_{L\to\infty}\frac{1}{L}\sum_{j=0}^{L-1}
\left(1-\frac{2C\mathsf{E}\varsigma}
{(\widetilde{\rho}_{1,2}(\mathsf{E}\varsigma)^{3}+\mathsf{E}\varsigma^{2}-\mathsf{E}\varsigma)L}\right)^j.
\end{aligned}
\end{equation*}
The contradiction with basic inequality \eqref{SCP11} proves
\eqref{SCP14.2}.

Taking into account \eqref{SCP14.1} and \eqref{SCP14.2}, the
extended version of \eqref{SCP12} after an application of
\eqref{SCP11} now looks
\begin{equation}\label{SCP14}
\begin{aligned}
&\lim_{L\to\infty}\frac{1}{L}\sum_{j=0}^{L-1}
c_{L-j}\left(1-\frac{2C\mathsf{E}\varsigma}
{(\widetilde{\rho}_{1,2}(\mathsf{E}\varsigma)^{3}+\mathsf{E}\varsigma^{2}-\mathsf{E}\varsigma)L}\right)^j\\
&=\lim_{L\to\infty}\frac{1}{L}\sum_{j=0}^{L-1}
c_{L-j}\left(1-\frac{2C_1\mathsf{E}\varsigma}
{(\widetilde{\rho}_{1,2}(\mathsf{E}\varsigma)^{3}+\mathsf{E}\varsigma^{2}-\mathsf{E}\varsigma)L}\right)^j\\
&\ \ \ \times\left(1-\frac{2(C-C_1)\mathsf{E}\varsigma}
{(\widetilde{\rho}_{1,2}(\mathsf{E}\varsigma)^{3}+\mathsf{E}\varsigma^{2}-\mathsf{E}\varsigma)L}\right)^j\\
&\leq \lim_{L\to\infty}\frac{1}{L}\sum_{j=0}^{L-1}
c_{L-j}\left(1-\frac{2C_1\mathsf{E}\varsigma}
{(\widetilde{\rho}_{1,2}(\mathsf{E}\varsigma)^{3}+\mathsf{E}\varsigma^{2}-\mathsf{E}\varsigma)L}\right)^j\\
&\ \ \ \times\lim_{L\to\infty}\frac{1}{L}\sum_{j=0}^{L-1}
\left(1-\frac{2(C-C_1)\mathsf{E}\varsigma}
{(\widetilde{\rho}_{1,2}(\mathsf{E}\varsigma)^{3}+\mathsf{E}\varsigma^{2}-\mathsf{E}\varsigma)L}\right)^j\\
&=\psi(C_1)\lim_{L\to\infty}\frac{1}{L}\sum_{j=0}^{L-1}
\left(1-\frac{2C_1\mathsf{E}\varsigma}
{(\widetilde{\rho}_{1,2}(\mathsf{E}\varsigma)^{3}+\mathsf{E}\varsigma^{2}-\mathsf{E}\varsigma)L}\right)^j\\
&\ \ \ \times\lim_{L\to\infty}\frac{1}{L}\sum_{j=0}^{L-1}
\left(1-\frac{2(C-C_1)\mathsf{E}\varsigma}
{(\widetilde{\rho}_{1,2}(\mathsf{E}\varsigma)^{3}+\mathsf{E}\varsigma^{2}-\mathsf{E}\varsigma)L}\right)^j.
\end{aligned}
\end{equation}
On the other hand, the right-hand side of \eqref{SCP7} can be
rewritten
\begin{equation}\label{SCP15}
\begin{aligned}
&\psi(C) \lim_{L\to\infty}\frac{1}{L}\sum_{j=0}^{L-1}
\left(1-\frac{2C\mathsf{E}\varsigma}
{(\widetilde{\rho}_{1,2}(\mathsf{E}\varsigma)^{3}+\mathsf{E}\varsigma^{2}-\mathsf{E}\varsigma)L}\right)^j\\
&=\psi(C) \lim_{L\to\infty}\frac{1}{L}\sum_{j=0}^{L-1}
\left(1-\frac{2C_1\mathsf{E}\varsigma}
{(\widetilde{\rho}_{1,2}(\mathsf{E}\varsigma)^{3}+\mathsf{E}\varsigma^{2}-\mathsf{E}\varsigma)L}\right)^j\\
& \ \ \ \times\left(1-\frac{2(C-C_1)\mathsf{E}\varsigma}
{(\widetilde{\rho}_{1,2}(\mathsf{E}\varsigma)^{3}+\mathsf{E}\varsigma^{2}-\mathsf{E}\varsigma)L}\right)^j\\
&=\psi(C)\lim_{L\to\infty}\frac{1}{L}\sum_{j=0}^{L-1}
\left(1-\frac{2C_1\mathsf{E}\varsigma}
{(\widetilde{\rho}_{1,2}(\mathsf{E}\varsigma)^{3}+\mathsf{E}\varsigma^{2}-\mathsf{E}\varsigma)L}\right)^j\\
&\ \ \ \times\lim_{L\to\infty}\frac{1}{L}\sum_{j=0}^{L-1}
\left(1-\frac{2(C-C_1)\mathsf{E}\varsigma}
{(\widetilde{\rho}_{1,2}(\mathsf{E}\varsigma)^{3}+\mathsf{E}\varsigma^{2}-\mathsf{E}\varsigma)L}\right)^j.
\end{aligned}
\end{equation}
The last equality in \eqref{SCP15} is the application of
\eqref{SCP14.1}. From \eqref{SCP14} and \eqref{SCP15} we finally
obtain $\psi(C_1)\leq\psi(C)$ for any positive $C_1\geq C$.

The
first statement of Lemma \ref{lem6} is proved. The proof of the
second statement of this lemma is similar.
\end{proof}

In the following we need in a stronger result that is given by Lemma
\ref{lem6}. Namely, we will prove the following lemma.

\begin{lem}\label{lem7}
If the sequence $\{c_i\}$ contains at least two distinct values,
then the function $\psi(C)$ is a strictly decreasing function, and
the function $\eta(C)$ is a strictly increasing function.
\end{lem}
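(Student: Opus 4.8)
The plan is to upgrade the weak inequality of Lemma \ref{lem6} to a strict one by exhibiting the difference $\psi(C_1)-\psi(C)$ (for $0\le C_1<C$) as a \emph{strictly} negative correlation rather than merely a nonpositive one. Throughout, write $\kappa=2\mathsf{E}\varsigma/(\widetilde{\rho}_{1,2}(\mathsf{E}\varsigma)^{3}+\mathsf{E}\varsigma^{2}-\mathsf{E}\varsigma)$, so that the weight in \eqref{SCP3} is $(1-\kappa C/L)^{j}$. First I would fix $0\le C_1<C$ and, exactly as in \eqref{SCP14.0}, factor the weight as $(1-\kappa C/L)^{j}=(1-\kappa C_1/L)^{j}(1-\kappa(C-C_1)/L)^{j}+O(L^{-2})$. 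Introducing the probability weights $\nu_L(j)\propto(1-\kappa C_1/L)^{j}$ on $j=0,1,\dots,L-1$ and the tilt $\varrho_j:=(1-\kappa(C-C_1)/L)^{j}$, the definition \eqref{SCP3} together with \eqref{SCP14.1} gives, in the limit,
\[
\psi(C_1)=\lim_{L\to\infty}\mathsf{E}_{\nu_L}[c_{L-J}],\qquad
\psi(C)=\lim_{L\to\infty}\frac{\mathsf{E}_{\nu_L}[c_{L-J}\,\varrho_J]}{\mathsf{E}_{\nu_L}[\varrho_J]},
\]
so that
\[
\psi(C_1)-\psi(C)=\lim_{L\to\infty}\frac{-\,\mathrm{Cov}_{\nu_L}(c_{L-J},\varrho_J)}{\mathsf{E}_{\nu_L}[\varrho_J]}.
\]

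Next I would establish the strict sign for each fixed $L$. The sequence $j\mapsto c_{L-j}$ is non-decreasing (because $\{c_i\}$ is non-increasing) and $j\mapsto\varrho_j$ is strictly decreasing (because $C>C_1$). The rearrangement identity underlying \eqref{SCP11}, in its weighted form $\sum_j w_j a_j b_j\sum_j w_j-\sum_j w_j a_j\sum_j w_j b_j=\tfrac12\sum_{m,n}w_m w_n(a_m-a_n)(b_m-b_n)$ with $w_j=\nu_L(j)$, shows $\mathrm{Cov}_{\nu_L}(c_{L-J},\varrho_J)\le 0$, which is precisely Lemma \ref{lem6}. For strictness I would isolate the endpoint pair $(m,n)=(0,L-1)$, whose contribution is $w_0 w_{L-1}(c_L-c_1)(1-\varrho_{L-1})$: here $1-\varrho_{L-1}>0$ always, while $c_L-c_1<0$ \emph{exactly} because $\{c_i\}$ is non-increasing and takes at least two distinct values, so $c_1>c_L$. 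Hence the covariance is strictly negative for every finite $L$.

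The main obstacle is that a strict sign for each $L$ need not survive the passage $L\to\infty$, since the normalized covariance could tend to $0$; the crux is to bound it away from $0$. For this I would pass to the continuous description already implicit in \eqref{SCP9} and Lemma \ref{lem6}: as $L\to\infty$, $\nu_L$ converges weakly to the law $\mu_{C_1}$ on $[0,1]$ with density proportional to $e^{-\kappa C_1 x}$, $\varrho_J\to e^{-\kappa(C-C_1)x}$, and the reversed cost profile $c_{L-\lfloor xL\rfloor}$ converges (in the averaged sense that guarantees existence of the limits in \eqref{SCP3}) to a bounded non-decreasing function $\gamma(x)$, yielding
\[
\psi(C_1)-\psi(C)=\frac{-\,\mathrm{Cov}_{\mu_{C_1}}\bigl(\gamma(X),e^{-\kappa(C-C_1)X}\bigr)}{\mathsf{E}_{\mu_{C_1}}\bigl[e^{-\kappa(C-C_1)X}\bigr]}.
\]
The covariance of the non-decreasing $\gamma$ with the strictly decreasing exponential over the full-support measure $\mu_{C_1}$ is strictly negative as soon as $\gamma$ is non-constant on a set of positive Lebesgue measure. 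The delicate point, and the genuine content of the hypothesis, is therefore that ``at least two distinct values'' must be read \emph{macroscopically}: it must force the limiting profile $\gamma$ to vary on a positive-measure set, ruling out the degenerate situation of a single isolated differing cost value invisible to $\mu_{C_1}$. Under the paper's modelling of $\{c_i\}$ as an $L$-indexed discretization of a fixed non-increasing cost profile (as in the linear-costs example of Section \ref{Examples}), non-constancy of the sequence is equivalent to non-constancy of $\gamma$, and the strict inequality follows.

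Finally, the proof for $\eta(C)$ is the mirror image. The only change is that the tilt becomes $\varrho_j=(1+\kappa(C-C_1)/L)^{j}$, which is strictly \emph{increasing} in $j$; paired with the non-decreasing, non-constant $c_{L-j}$, the same identity now yields a strictly positive covariance, and passing to the limit as above gives $\eta(C)>\eta(C_1)$. Thus $\eta$ is strictly increasing, completing the lemma.
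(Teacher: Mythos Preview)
Your approach is correct under the additional structural reading you make explicit, but it differs substantially from the paper's. You express $\psi(C_1)-\psi(C)$ as a normalized covariance, obtain a strict sign at each finite $L$ via the rearrangement identity and the endpoint pair $(0,L-1)$, and then---rightly identifying that finite-$L$ strictness need not survive the limit---pass to a continuous profile $\gamma$ and invoke strict negativity of the covariance of a non-constant monotone function against a strictly decreasing exponential under a full-support measure. The paper instead argues by analyticity: it observes (via Lemma~\ref{lem5} and relation~\eqref{SP11.5}) that $\psi(C)$ extends to an analytic function on the whole plane with $\eta(C)=\psi(-C)$; combined with the monotonicity already established in Lemma~\ref{lem6}, if $\psi(C_1)=\psi(C_2)$ for $C_1\neq C_2$ then $\psi$ is constant on an interval, hence identically constant, and since $\psi$ is determined by its series coefficients (which encode the costs), constancy forces $c_i\equiv c^*$ for all $i$, contradicting the hypothesis.

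The paper's route is shorter and sidesteps precisely the obstacle you flag: analytic rigidity makes the passage to the limit automatic, with no need for a limiting profile $\gamma$ or any ``macroscopic'' reading of the hypothesis. Your route, by contrast, requires that the $L$-indexed cost sequences arise as discretizations of a fixed non-increasing function so that $\gamma$ exists and inherits non-constancy---an assumption consistent with the paper's examples but not stated in the lemma itself. What your argument buys is a transparent probabilistic picture in which the role of monotonicity of $\{c_i\}$ is fully visible through the covariance identity, whereas the paper's proof leans on function-theoretic rigidity and treats the cost structure only through the series coefficients of $\psi$.
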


\begin{proof} In order to prove this lemma it is sufficient to prove
that if the sequence $\{c_i\}$ is nontrivial, that is there are at
least two distinct values of this sequence, then for any distinct
real numbers $C_1\neq C_2$ the values of functions are also
distinct, that is, $\psi(C_1)\neq \psi(C_2)$ and $\eta(C_1)\neq
\eta(C_2)$. Let us prove the first inequality: $\psi(C_1)\neq
\psi(C_2)$. Rewrite \eqref{SCP3} as
\begin{equation}\label{SCP3'}
\psi(C)=\lim_{L\to\infty}\dfrac{\dfrac{1}{L}\sum\limits_{j=0}^{L-1}c_{L-j}\left(1-\dfrac{2C\mathsf{E}\varsigma}
{(\widetilde{\rho}_{1,2}(\mathsf{E}\varsigma)^{3}+\mathsf{E}\varsigma^{2}-\mathsf{E}\varsigma)L}\right)^j}
{\dfrac{1}{L}\sum\limits_{j=0}^{L-1}\left(1-\dfrac{2C\mathsf{E}\varsigma}
{(\widetilde{\rho}_{1,2}(\mathsf{E}\varsigma)^{3}+\mathsf{E}\varsigma^{2}-\mathsf{E}\varsigma)L}\right)^j}.
\end{equation}
The limit of the denominator is equal to
$\exp\left(-\frac{2C\mathsf{E}\varsigma}
{\widetilde{\rho}_{1,2}(\mathsf{E}\varsigma)^{3}+\mathsf{E}\varsigma^{2}-\mathsf{E}\varsigma}\right)$.
The limit of the numerator does exist and bounded, since the
sequence $\{c_i\}$ is assumed to be bounded. As well, according to
the other representation following from Lemma \ref{lem5} and
relation \eqref{SP11.5}, it is an analytic function in $C$ taking a
nontrivial set of values.

The  analytic function $\psi(C)$ is defined for all real $C\geq0$
and it can be extended analytically for the whole complex plane.
Namely, for real negative values $C$ we arrive at the function
$\eta(C)=\psi(-C)$. According to the maximum principle for the
module of an analytic function, if an analytic function takes the
same values in two distinct points inside a domain, then the
function must be the constant. If $c_i=c^*$ for all $i=1,2,\ldots$,
then this is just the case where $\psi(C)=c^*$ for all $C$. If there
exist $i_0$ and $i_1$ such that $c_{i_0}\neq c_{i_1}$, then the
function $\psi(C)$ cannot be a constant, because the analytic
function is uniquely defined by the coefficients in the series
expansion.
 So, the inequality $\psi(C_1)\neq
\psi(C_2)$ for distinct values $C_1$ and $C_2$ follows. The proof of
the second inequality $\eta(C_1)\neq \eta(C_2)$ is similar.
\end{proof}

We are ready now to formulate and prove a main theorem on optimal
control of the dam model considered in the present paper.

\begin{thm}\label{thm3}
Under the assumption that the costs $c_i$ are not increasing, and under additional mild conditions of Theorems \ref{thm1} and \ref{thm4}, there is a unique solution to the control problem. The solution to the control problem is defined by choice of the parameter $\rho_1$ as follows.

Let $\overline{C}$ be the minimum value of the functional $J^{\mathrm{upper}}$ defined in \eqref{SP11.4} and \eqref{SP11.5} and, respectively, let $\underline{C}$ be the minimum value of the functional $J^{\mathrm{lower}}$ defined in \eqref{SP12.1} and \eqref{SP12.2}. Then at least one of two parameters $\overline{C}$ or $\underline{C}$ must be equal to zero.

(1) In the case $\overline{C}=0$ and $\underline{C}>0$, the solution to the control problem is achieved for $\rho_1=1-\delta$, where positive $\delta$ vanishes such that $\delta L\to \underline{C}$ as $L\to\infty$.

(2) In the case $\underline{C}=0$ and $\overline{C}>0$, the solution to the control problem is achieved for
$\rho_1=1+\delta$, where positive $\delta$ vanishes such that $\delta L\to {\overline{C}}$ as $L\to\infty$.

(3) In the case where both $\overline{C}=0$ and $\underline{C}=0$, the solution to the control problem is $\rho_1=1$.
\end{thm}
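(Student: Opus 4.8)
The plan is to reduce the three-regime comparison to the minimization of a single function of one real parameter. By Propositions \ref{prop1}, \ref{prop2} and \ref{prop3} together with Lemma \ref{lem5}, the limiting objective equals $J_0 := \lim_{L\to\infty} J(L)$ of \eqref{SP11.1} at $\rho_1=1$, equals $J^{\mathrm{upper}}(C)=C\,G_+(C)+\psi(C)$ on the family $\rho_1=1+\delta$, and equals $J^{\mathrm{lower}}(C)=C\,G_-(C)+\eta(C)$ on the family $\rho_1=1-\delta$, where $G_\pm$ are the explicit damage factors read off from \eqref{SP11.4} and \eqref{SP12.1}. First I would verify the matching and growth behaviour: that $J^{\mathrm{upper}}(C)\to J_0$ as $C\downarrow 0$ (so the upper family attaches continuously to the $\rho_1=1$ value), and that $J^{\mathrm{upper}}(C)$ and $J^{\mathrm{lower}}(C)$ both tend to $+\infty$ as $C\to\infty$. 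Existence of an optimal $C$ in each family then follows from continuity on $(0,\infty)$, and existence of the overall optimum is immediate, exactly as in \cite{Abramov 2007}.

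The heart of the argument is to glue the two families into one function of a signed parameter. Using the identity $\eta(C)=\psi(-C)$ recorded in the proof of Lemma \ref{lem7}, and the corresponding reflection relating the damage factor $G_-$ to $G_+$ (which I would check from \eqref{SP11.4} and \eqref{SP12.1}), I would set $\Phi(C):=J^{\mathrm{upper}}(C)$ for $C>0$, $\Phi(C):=J^{\mathrm{lower}}(-C)$ for $C<0$, and $\Phi(0):=J_0$, and verify that $\Phi$ is continuous across $C=0$. Minimizing $J$ over all three regimes then becomes minimizing $\Phi$ over $C\in\Real$, with $C>0$ encoding $\rho_1=1+\delta$, $C<0$ encoding $\rho_1=1-\delta$, and $C=0$ encoding $\rho_1=1$. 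The trichotomy is precisely the assertion that the minimizer $C^\ast$ of $\Phi$ lies in exactly one of $\{C>0\}$, $\{C<0\}$, $\{0\}$: writing $\overline C:=\max(C^\ast,0)$ and $\underline C:=\max(-C^\ast,0)$, at least one of $\overline C,\underline C$ vanishes, and cases (1)--(3) correspond to $C^\ast<0$, $C^\ast>0$, $C^\ast=0$ respectively.

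The decisive step, and the one I expect to be the main obstacle, is uniqueness of the minimizer $C^\ast$ --- equivalently, ruling out that $\Phi$ dips below $J_0$ on both sides of the origin, which is what would let both $\overline C$ and $\underline C$ be positive. Here the complementary monotonicity from Lemma \ref{lem6} (with $\psi$ non-increasing and $\eta$ non-decreasing, both equal to $c^\ast$ at $0$) and, crucially, the strict monotonicity and analyticity from Lemma \ref{lem7} are exactly what prevent the water-cost contribution from manufacturing a second local minimum. Concretely, I would show that $\Phi-J_0$ changes sign at most once --- via the majorization inequality \eqref{SCP11} of Hardy--Littlewood--P\'olya that already underlies Lemma \ref{lem6}, or by showing $\Phi$ has a strictly monotone derivative --- so that the sign of a single balance quantity, essentially the one-sided derivative $\Phi'(0)$ combining the damage balance $j_2\frac{\rho_2}{1-\rho_2}-j_1$ with the water-cost slopes $\psi'(0^+)$ and $\eta'(0^+)$, decides on which side of $0$ the minimizer falls. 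A negative value forces $C^\ast>0$ (case (2)), a positive value forces $C^\ast<0$ (case (1)), and a vanishing value forces $C^\ast=0$, i.e.\ $\rho_1=1$ (case (3)).

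Finally I would record uniqueness of the control itself. Whenever $\{c_i\}$ takes at least two distinct values, Lemma \ref{lem7} makes $\psi,\eta$ strictly monotone, so $\Phi$ has a single minimizer $C^\ast$; the optimal $C$, and hence the optimal $\rho_1$ and the associated distribution $B_1(x,C)$, is thereby determined uniquely, completing the proof.
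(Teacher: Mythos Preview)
Your gluing strategy---merging $J^{\mathrm{upper}}$ and $J^{\mathrm{lower}}$ into a single function $\Phi$ of a signed parameter and reading the trichotomy from the location of its minimizer---is a genuinely different route from the paper's. The paper does not glue; it argues in two layers. First it splits the objective as (damage term) $+$ (water-cost term) and imports, via Theorems \ref{i1}--\ref{i4}, the trichotomy and uniqueness for the damage-only functional already established in \cite{Abramov 2007}. Second, it uses only the opposite monotonicities of $\psi$ and $\eta$ from Lemmas \ref{lem6} and \ref{lem7}: if the minimizer $\underline C$ of $J^{\mathrm{lower}}$ is positive, then $\eta(\underline C)>c^*$, so the damage part at $\underline C$ lies strictly below its value at $C=0$; this identifies the damage-only regime of \cite{Abramov 2007} and thereby forces $\overline C=0$, with the symmetric argument covering the other case. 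In other words, the paper sidesteps precisely the obstacle you flag---global unimodality of a combined $\Phi$---by piggy-backing on the earlier damage-only result rather than re-proving convexity from scratch.

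Two cautions about your route. First, while $\eta(C)=\psi(-C)$ is indeed recorded in the proof of Lemma \ref{lem7}, the analogous reflection you posit for the damage factors $G_\pm$ is not visible from \eqref{SP11.4} and \eqref{SP12.1} as written (the exponents in the two formulas are not related by a simple $C\mapsto -C$), so your ``which I would check'' step may need more than a sign flip. Second, and more substantively, your redefinition $\overline C:=\max(C^\ast,0)$, $\underline C:=\max(-C^\ast,0)$ makes ``at least one vanishes'' a tautology; to match the theorem you must still show that these agree with the paper's separately defined branch minimizers, which forces you to establish unimodality of $\Phi$ (not merely existence of a minimum). Neither the majorization inequality \eqref{SCP11} nor the strict monotonicity of $\psi,\eta$ alone controls the damage piece, so this is real work. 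Your path is cleaner and more self-contained if it goes through, but the paper's decomposition buys a shorter argument because the convexity analysis for the damage part was already done in \cite{Abramov 2007}.
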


\begin{proof}
Note first, that under the assumptions made there is a unique
solution to the control problem considered in this paper. Indeed, a
solution contains two terms one of them corresponds to the
expression for $p_1J_1+p_2J_2$ in \eqref{I1} and another one
corresponds to the term
$\sum\limits_{i=L^{\mathrm{lower}}+1}^{L^{\mathrm{upper}}}c_iq_i$ in
\eqref{I1}. The first term of a solution is related to the models
where the water costs are not taken into account, while the
additional second term is related to the extended problem, where the
water costs are taken into account.

In the case where the water costs are not taken into account, the
existence of a unique solution to the control problem for the
particular system in \cite{Abramov 2007} follows from the main
result of that paper. The same result holds true for a more general
model with compound Poisson input flow but without water costs
included. The last is supported by Theorems \ref{i1} - \ref{i4},
which are similar to those Theorems 4.1 - 4.4 of \cite{Abramov
2007}.

In the case of the dam model, where the water costs are taken into
account, the second term in the solution is either
$c^{\mathrm{upper}}$ or $c^{\mathrm{lower}}$. According to Lemma
\ref{lem5} $c^{\mathrm{upper}}=\psi(C)$ and
$c^{\mathrm{lower}}=\eta(C)$, and according to Lemmas \ref{lem6} and
\ref{lem7} the function $\psi(C)$ is strictly decreasing in $C$,
while the function $\eta(C)$ is strictly increasing in $C$, and
$\psi(0)=\eta(0)=c^*$. According to these properties, there is a
unique solution to the control problem considered in the present
paper as well, and it satisfies the following properties.

In the case where the both minima of $J^{\mathrm{upper}}$ and $J^{\mathrm{lower}}$
are achieved in $C=0$, that is both $\overline{C}=0$ and $\underline{C}=0$, then  $c^{\mathrm{upper}}=c^{\mathrm{lower}}=c^*$ and the term $p_1J_1+p_2J_2$ of the objective function in \eqref{I1} coincides with the term
\begin{equation*}
j_1\frac
{\widetilde{\rho}_{1,2}(\mathsf{E}\varsigma)^{3}+\mathsf{E}\varsigma^{2}-\mathsf{E}\varsigma}
{2\mathsf{E}\varsigma} +j_2\frac{\rho_2}{1-\rho_2}\cdot \frac
{\widetilde{\rho}_{1,2}(\mathsf{E}\varsigma)^{3}+\mathsf{E}\varsigma^{2}-\mathsf{E}\varsigma}
{2\mathsf{E}\varsigma}
\end{equation*}
in \eqref{SP11.1}. That is both the minimum of $J^{\mathrm{upper}}$ and that of $J^{\mathrm{lower}}$ are the same, and they are equal to the
right-hand side of \eqref{SP11.1}. In this case the minimum of the objective
function in \eqref{I1} is achieved for $\rho_1=1$.

If the minimum of $J^{\mathrm{lower}}$ is achieved for
$C=\underline{C}>0$, then, since $\eta(C)$ is strictly increasing,
we have $c^{\mathrm{lower}}> c^*$,  and hence the term
$p_1J_1+p_2J_2$ of the objective function in \eqref{I1} satisfies
the inequality:
\begin{equation*}
p_1J_1+p_2J_2< j_1\frac
{\widetilde{\rho}_{1,2}(\mathsf{E}\varsigma)^{3}+\mathsf{E}\varsigma^{2}-\mathsf{E}\varsigma}
{2\mathsf{E}\varsigma}+j_2\frac{\rho_2}{1-\rho_2}\cdot \frac
{\widetilde{\rho}_{1,2}(\mathsf{E}\varsigma)^{3}+\mathsf{E}\varsigma^{2}-\mathsf{E}\varsigma}
{2\mathsf{E}\varsigma}.
\end{equation*}
This implies that $J^{\mathrm{lower}}$ is less than the right-hand
side of \eqref{SP11.1}. As well, in this case $c^{\mathrm{upper}}<
c^*$, and the minimum of $J^{\mathrm{upper}}$ must be achieved for
$C=\overline{C}=0$.
 In this case the minimum
of the objective function in \eqref{I1} is achieved for $\rho_1=1-\delta$, where positive
$\delta$ vanishes as $L\to\infty$, and
$L\delta\to\underline{C}$.

In the opposite case, if the minimum of $J^{\mathrm{upper}}$ is
achieved for $C=\overline{C}>0$, then the arguments are similar to
those above, and $J^{\mathrm{upper}}$ is not greater than the
right-hand side of \eqref{SP11.1}. The minimum of
$J^{\mathrm{lower}}$ must be achieved for $C=\overline{C}=0$. In
this case the minimum of the objective function in \eqref{I1} is
achieved for $\rho_1=1+\delta$, where positive $\delta$ vanishes as
$L\to\infty$, and $L\delta\to\overline{C}$.
\end{proof}

From Theorem \ref{thm3} we have the following evident property of
the optimal control.

\begin{cor}\label{cor2}
The solution to the control problem can be $\rho_1=1$ only in the
case $j_1\leq j_2\frac{\rho_2}{1-\rho_2}$. Specifically, the
equality is achieved only for $c_i\equiv c$, $i=1,2,\ldots,L$,
where $c$ is an arbitrary positive constant.
\end{cor}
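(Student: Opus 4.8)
The plan is to read off the inequality from the \emph{upper} branch alone. By Theorem \ref{thm3}, $\rho_1=1$ can be optimal only in case~(3), where in particular $\overline{C}=0$; that is, the functional $J^{\mathrm{upper}}(C)=D(C)+\psi(C)$ attains its minimum over $C\ge0$ at the left endpoint $C=0$. Here I abbreviate by $D(C)$ the damage part of \eqref{SP11.4} and use $c^{\mathrm{upper}}=\psi(C)$ from Lemma \ref{lem5}, with $\psi(0)=c^{*}$ by Lemma \ref{lem6}; set $\kappa=2\mathsf{E}\varsigma/(\widetilde{\rho}_{1,2}(\mathsf{E}\varsigma)^{3}+\mathsf{E}\varsigma^{2}-\mathsf{E}\varsigma)$ so that the exponentials in \eqref{SP11.4} are $\mathrm{e}^{C\kappa}$. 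First I would note that $J^{\mathrm{upper}}$ is differentiable at $C=0^{+}$, so a left-endpoint minimum forces the (merely necessary, which is all we need) first-order condition $\frac{\mathrm{d}}{\mathrm{d}C}J^{\mathrm{upper}}(0^{+})\ge0$.

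Next I would expand $D(C)$ for small $C$. From $\mathrm{e}^{C\kappa}-1=C\kappa\bigl(1+\tfrac12C\kappa+O(C^{2})\bigr)$ one obtains $\frac{C}{\mathrm{e}^{C\kappa}-1}=\frac1\kappa-\frac C2+O(C^{2})$ and $\frac{C\mathrm{e}^{C\kappa}}{\mathrm{e}^{C\kappa}-1}=\frac1\kappa+\frac C2+O(C^{2})$, hence
\begin{equation*}
D(C)=\frac1\kappa\Bigl(j_1+j_2\tfrac{\rho_2}{1-\rho_2}\Bigr)+\frac C2\Bigl(j_2\tfrac{\rho_2}{1-\rho_2}-j_1\Bigr)+O(C^{2}).
\end{equation*}
The constant term reproduces the damage part of \eqref{SP11.1}, a useful consistency check that $J^{\mathrm{upper}}(0^{+})$ is the value at $\rho_1=1$. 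Writing $\psi(C)=c^{*}+\psi'(0)C+O(C^{2})$ then gives
\begin{equation*}
\frac{\mathrm{d}}{\mathrm{d}C}J^{\mathrm{upper}}(0^{+})=\tfrac12\Bigl(j_2\tfrac{\rho_2}{1-\rho_2}-j_1\Bigr)+\psi'(0),
\end{equation*}
so the necessary condition reads $j_2\frac{\rho_2}{1-\rho_2}-j_1\ge-2\psi'(0)$.

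It remains to fix the sign of $\psi'(0)$. Since $\psi$ is non-increasing (Lemma \ref{lem6}) we have $\psi'(0)\le0$, which already yields $j_1\le j_2\frac{\rho_2}{1-\rho_2}$ and settles the inequality. For the equality clause I would compute $\psi'(0)$ from \eqref{SCP3'}: expanding $(1-C\kappa/L)^{j}=1-C\kappa j/L+O(C^{2})$ and letting $L\to\infty$ identifies $\psi'(0)=-\kappa\int_0^1(x-\tfrac12)g(x)\,\mathrm{d}x$, where $g$ is the non-decreasing limiting profile of $\{c_{L-j}\}$. Because $x\mapsto x-\tfrac12$ is increasing and $g$ is non-decreasing, this integral is $\ge0$ and vanishes iff $g$ is constant---a strict Chebyshev/majorization inequality of the type \eqref{SCP11} already exploited in Lemma \ref{lem6}. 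Thus $\psi'(0)<0$ whenever $\{c_i\}$ is non-constant, so $j_1=j_2\frac{\rho_2}{1-\rho_2}$ forces $c_i\equiv c$; conversely, for constant costs $\psi$ and $\eta$ are constant, the problem collapses to that of \cite{Abramov 2007}, where $\rho_1=1$ is optimal exactly when $j_1=j_2\frac{\rho_2}{1-\rho_2}$.

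I expect the equality clause, not the inequality, to be the delicate point: one must justify that $\psi$ is differentiable at $0$ and that the limit in \eqref{SCP3'} commutes with $\mathrm{d}/\mathrm{d}C$ under the ``in series'' convention on $\{c_i\}$, and then secure the \emph{strict} Chebyshev inequality for non-constant $g$ (strict monotonicity of $\psi$ from Lemma \ref{lem7} by itself does not rule out $\psi'(0)=0$). Throughout I would invoke only the upper branch, since $\overline{C}=0$ already gives the one-sided bound asserted; the companion condition $\underline{C}=0$ on the lower branch yields only the complementary estimate bounding $j_1$ from below and is not required here.
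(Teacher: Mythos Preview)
The paper offers no proof of this corollary; it is stated as ``evident'' from Theorem~\ref{thm3}. Your first-order analysis at $C=0^{+}$ is correct and supplies a genuine argument. For the bare inequality there is a still quicker, derivative-free route that the paper presumably has in mind: if $\overline{C}=0$, then for every $C>0$ one has $D(C)+\psi(C)\ge D(0)+\psi(0)$, and since $\psi(C)\le\psi(0)$ by Lemma~\ref{lem6} this forces $D(C)\ge D(0)$. Thus the damage-only upper functional is itself minimized at $C=0$, and by the trichotomy of \cite{Abramov 2007} this occurs iff $j_1\le j_2\,\rho_2/(1-\rho_2)$. This avoids the Taylor expansion entirely.

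For the equality clause, however, your approach is the substantive one. Monotonicity of $\psi$ (Lemma~\ref{lem6}) or even strict monotonicity (Lemma~\ref{lem7}) does not by itself rule out $\psi'(0)=0$, as you note; and the qualitative argument above only yields $D(C)\ge D(0)$, which in the borderline case $j_1=j_2\,\rho_2/(1-\rho_2)$ gives no contradiction. Your computation $\psi'(0)=-\kappa\int_0^1(x-\tfrac12)g(x)\,\mathrm{d}x$ and the strict Chebyshev inequality for a non-constant non-decreasing profile $g$ are exactly what is needed to close the gap. Two remarks on the technicalities you flag: differentiability of $\psi$ at $0$ is not an issue here, since the paper already treats $\psi$ as analytic in $C$ in the proof of Lemma~\ref{lem7}; and the conclusion ``$g$ constant'' is indeed the right formulation of ``$c_i\equiv c$'' in the asymptotic (in-series) setting of the paper, so there is no real discrepancy with the statement.
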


Although Corollary \ref{cor2} provides a result in the form of
simple inequality, this result is not really useful, since it is an
evident extension of the result of \cite{Abramov 2007}. A more
constructive result is obtained for the special case considered in
the next section.

\section{Example of linear costs}\label{Examples}

In this section we study an example related to the case of linear
costs.

Assume that $c_1$ and $c_L<c_1$ are given.  Then the assumption
that the costs are linear means, that
\begin{equation}\label{E1}
c_i = c_1 - \frac{i-1}{L-1}(c_1-c_L), \ \ i=1,2,\ldots, L.
\end{equation}
It is assumed that as $L$ is changed, the costs are recalculated
as follows. The first and last values of the cost $c_1$ and $c_L$
remains the same. Other costs in the intermediate points are
recalculated according to \eqref{E1}.

Therefore, to avoid confusing with the appearance of the index $L$
for the fixed (unchangeable) values of cost  $c_1$ and $c_L$, we
use the other notation: $c_1=\overline{c}$ and
$c_L=\underline{c}$. Then \eqref{E1} has the form
\begin{equation}\label{E2}
c_i = \overline{c} - \frac{i-1}{L-1}(\overline{c}-\underline{c}),
\ \ i=1,2,\ldots, L.
\end{equation}

In the following we shall also use the inverse form of \eqref{E2}.
Namely,
\begin{equation}\label{E3}
c_{L-i}=\underline{c}+\frac{i}{L-1}(\overline{c}-\underline{c}), \
\ i=0,1,\ldots, L-1.
\end{equation}

Apparently,
\begin{equation}\label{E4}
c^*=\frac{\overline{c}+\underline{c}}{2}.
\end{equation}
For $c^{\mathrm{upper}}$ we have
\begin{equation}\label{E5}
\begin{aligned}
c^{\mathrm{upper}}&=\psi(C)\\
&=\lim\limits_{L\to\infty}\frac{\sum\limits_{j=0}^{L-1}
\left(\underline{c}+\dfrac{j}{L-1}(\overline{c}-\underline{c})\right)
\left(1-\dfrac{2C\mathsf{E}\varsigma}
{(\widetilde{\rho}_{1,2}(\mathsf{E}\varsigma)^{3}+\mathsf{E}\varsigma^{2}-\mathsf{E}\varsigma)L}\right)^j}
{\sum\limits_{j=0}^{L-1} \left(1-\dfrac{2C\mathsf{E}\varsigma}
{(\widetilde{\rho}_{1,2}(\mathsf{E}\varsigma)^{3}+\mathsf{E}\varsigma^{2}-\mathsf{E}\varsigma)L}\right)^j}\\
&=\underline{c}+(\overline{c}-\underline{c})\lim_{L\to\infty}\frac{1}{L-1}\cdot
\frac{\sum\limits_{j=0}^{L-1} j
\left(1-\dfrac{2C\mathsf{E}\varsigma}
{(\widetilde{\rho}_{1,2}(\mathsf{E}\varsigma)^{3}+\mathsf{E}\varsigma^{2}-\mathsf{E}\varsigma)L}\right)^j}
{\sum\limits_{j=0}^{L-1} \left(1-\dfrac{2C\mathsf{E}\varsigma}
{(\widetilde{\rho}_{1,2}(\mathsf{E}\varsigma)^{3}+\mathsf{E}\varsigma^{2}-\mathsf{E}\varsigma)L}\right)^j}\\
&=\underline{c}+(\overline{c}-\underline{c}) \dfrac
{\widetilde{\rho}_{1,2}(\mathsf{E}\varsigma)^{3}+\mathsf{E}\varsigma^{2}-\mathsf{E}\varsigma}{2C\mathsf{E}\varsigma}\\
&\ \ \ \times \frac{-\dfrac{2C\mathsf{E}\varsigma}
{\widetilde{\rho}_{1,2}(\mathsf{E}\varsigma)^{3}+\mathsf{E}\varsigma^{2}-\mathsf{E}\varsigma}+
\exp\left(\dfrac{2C\mathsf{E}\varsigma}
{\widetilde{\rho}_{1,2}(\mathsf{E}\varsigma)^{3}+\mathsf{E}\varsigma^{2}-\mathsf{E}\varsigma}\right)-1}
{\exp\left(\dfrac{2C\mathsf{E}\varsigma}
{\widetilde{\rho}_{1,2}(\mathsf{E}\varsigma)^{3}+\mathsf{E}\varsigma^{2}-\mathsf{E}\varsigma}\right)-1}.
\end{aligned}
\end{equation}
For example, as $C$ converges to zero in \eqref{E5}, then
$c^{\mathrm{upper}}$ converges to
$\underline{c}+\frac{1}{2}(\overline{c}-\underline{c})=c^*$. This
is in agreement with the statement of Proposition \ref{prop1}.

In turn, for $c^{\mathrm{lower}}$ we have
\begin{equation}\label{E6}
\begin{aligned}
c^{\mathrm{lower}}&=\eta(C)\\
&=\lim_{L\to\infty}\frac{\sum\limits_{j=0}^{L-1}
\left(\underline{c}+\dfrac{j}{L-1}(\overline{c}-\underline{c})\right)
\left(1+\dfrac{2C\mathsf{E}\varsigma}
{(\widetilde{\rho}_{1,2}(\mathsf{E}\varsigma)^{3}+\mathsf{E}\varsigma^{2}-\mathsf{E}\varsigma)L}\right)^j}
{\sum\limits_{j=0}^{L-1}
\left(1+\dfrac{2C\mathsf{E}\varsigma}
{(\widetilde{\rho}_{1,2}(\mathsf{E}\varsigma)^{3}+\mathsf{E}\varsigma^{2}-\mathsf{E}\varsigma)L}\right)^j}\\
&=\underline{c}+(\overline{c}-\underline{c})\lim_{L\to\infty}\frac{1}{L-1}\cdot
\frac{\sum\limits_{j=0}^{L-1} j
\left(1+\dfrac{2C\mathsf{E}\varsigma}
{(\widetilde{\rho}_{1,2}(\mathsf{E}\varsigma)^{3}+\mathsf{E}\varsigma^{2}-\mathsf{E}\varsigma)L}\right)^j}
{\sum\limits_{j=0}^{L-1}
\left(1+\dfrac{2C\mathsf{E}\varsigma}
{(\widetilde{\rho}_{1,2}(\mathsf{E}\varsigma)^{3}+\mathsf{E}\varsigma^{2}-\mathsf{E}\varsigma)L}\right)^j}\\
&=\underline{c}+(\overline{c}-\underline{c}) \frac
{\widetilde{\rho}_{1,2}(\mathsf{E}\varsigma)^{3}+\mathsf{E}\varsigma^{2}-\mathsf{E}\varsigma}
{2C\mathsf{E}\varsigma}\\
&\ \ \ \times \frac{\dfrac{2C\mathsf{E}\varsigma}
{\widetilde{\rho}_{1,2}(\mathsf{E}\varsigma)^{3}+\mathsf{E}\varsigma^{2}-\mathsf{E}\varsigma}-1+
\exp\left(-\dfrac{2C\mathsf{E}\varsigma}
{\widetilde{\rho}_{1,2}(\mathsf{E}\varsigma)^{3}+\mathsf{E}\varsigma^{2}-\mathsf{E}\varsigma}\right)}
{1-\exp\left(-\dfrac{2C\mathsf{E}\varsigma}
{\widetilde{\rho}_{1,2}(\mathsf{E}\varsigma)^{3}+\mathsf{E}\varsigma^{2}-\mathsf{E}\varsigma}\right)}.
\end{aligned}
\end{equation}
Again, as $C$ converges to zero in \eqref{E6}, then $c^{\mathrm{lower}}$
converges to
$\underline{c}+\frac{1}{2}(\overline{c}-\underline{c})=c^*$.
So, we arrive at the agreement with the statement of Proposition
\ref{prop1}.

\smallskip
We cannot give the explicit solution because the calculations are
very routine and cumbersome. However, we provide a numerical result.
For simplicity, the input flow in the numerical example is assumed
to be ordinary Poisson, that is we set $\mathrm{E}\varsigma=1$ and
$\mathrm{E}\varsigma^{2}=1$ in our calculations.

Following Corollary \ref{cor2}, take first
$j_1=j_2\frac{\rho_2}{1-\rho_2}$. Clearly, that for these relation
between parameters $j_1$ and $j_2$ the minimum of $J^{\mathrm{lower}}$ must
be achieved for $C=0$, while the minimum of $J^{\mathrm{upper}}$ must be
achieved for a positive $C$. Now, keeping $j_1$ fixed assume that
$j_2$ increases. Then, the problem is to find the value for
parameter $j_2$ such that the value $C$ corresponding to the
minimization problem of $J^{\mathrm{upper}}$ reaches the point 0.

In our example we take $j_1=1$, $\rho_2=\frac{1}{2}$,
$\underline{c}=1$, $\overline{c}=2$, $\widetilde{\rho}_{1,2}=1$.
In the table below we outline some values $j_2$ and the
corresponding value $C$ for optimal solution of functional
$J^{\mathrm{upper}}$. It is seen from the table that the optimal value is
achieved in the case $j_2\approx1.34$. Therefore, in the present
example $j_1=1$ and $j_2\approx1.34$ lead to the optimal solution
$\rho_1=1$.

\begin{table}
    \begin{center}
        \begin{tabular}{c|c}\hline
         Parameter & Argument of optimal value\\
        $j_2$ & $C$\\
        \hline
         1.06 & 0.200\\
         1.08 & 0.182\\
         1.10 & 0.165\\
         1.12 & 0.149\\
         1.14 & 0.134\\
         1.16 & 0.120\\
        1.18 & 0.104\\
        1.20 & 0.090\\
        1.25 & 0.055\\
        1.30 & 0.022\\
        1.33 & 0.010\\
        1.34 & 0\\
        \hline
        \end{tabular}

        \medskip
        \caption{The values of parameter $j_2$ and corresponding
        arguments of optimal value $C$}
    \end{center}
\end{table}

\section*{Acknowledgements}
The author thanks Prof. Phil Howlett (University of South
Australia), whose questions in a local seminar in the University of
South Australia initiated the solution of this circle of problems
including the earlier paper of the author \cite{Abramov 2007}.


\end{document}